\newtheorem{theorem}{Theorem}
\newtheorem{definition}[theorem]{Definition}
\newtheorem{lemma}[theorem]{Lemma}
\newtheorem{corollary}[theorem]{Corollary}
\newtheorem{assumption}[theorem]{Assumption}
\newcommand{\m}{m}
\newcommand{\q}{R}
\newcommand{\ITERNDT}{j}
\newcommand{\RANGE}{\delta}
\newcommand{\ITERR}{s}
\newcommand{\ITERRT}{s'}
\newcommand{\LIAP}{V}
\newcommand{\RTUQ}{\lambda}
\newcommand{\SC}{\mu}
\newcommand{\STEP}{\gamma}
\newcommand{\QNSB}{\eta}
\newcommand{\QNSC}{\omega}
\newcommand{\RTQ}{\sigma}
\newcommand{\secref}[1]{Sec.~\ref{#1}}
\newlength\myindent
\def\thesubsectiondis{\arabic{subsection}.} 
\begin{document}    
\title{Finite-Bit Quantization For Distributed Algorithms With Linear Convergence}
\author{Nicol\`{o} Michelusi, Gesualdo Scutari, and Chang-Shen Lee
	\thanks{N. Michelusi is with the School of Electrical, Computer and Energy Engineering, Arizona State University, AZ, USA; email: nicolo.michelusi@asu.edu.}
	\thanks{G. Scutari is with the School of Industrial Engineering, Purdue University, IN, USA; email: gscutari@purdue.edu.}
	\thanks{C.-S. Lee is with Bloomberg, New York, USA; email: ben3003neb@gmail.com.} 
	\thanks{Michelusi's work was supported in part by the National Science Foundation under grant CNS-2129015. The work of Scutari and Lee was  partially supported by the National Science Foundation under the grant  CIF 1719205; the Army Research Office under the grant  W911NF1810238; and the Office of Naval Research, under the grant N00014-21-1-2673.}}
\maketitle

\begin{abstract}
This paper studies distributed algorithms for (strongly convex) composite optimization problems  over mesh networks, subject to quantized communications.
Instead of focusing on a specific algorithmic design, a black-box model is proposed, casting linearly convergent distributed algorithms in the form of  fixed-point iterates.
The algorithmic model is equipped  with a novel random or deterministic
 \emph{Biased Compression} (BC) rule    on the quantizer design, and
  a new \emph{Adaptive encoding Non-uniform Quantizer} (ANQ)
 coupled with a communication-efficient encoding scheme,
 which  implements the BC-rule using a {\it finite} number of bits (below machine precision).  
    This fills a gap existing in most state-of-the-art
  quantization schemes, such as those based on the popular  compression rule, which rely on communication of some scalar signals
  with {negligible quantization} error (in practice quantized at the machine precision).
 A unified communication  complexity analysis is developed  for the black-box model, determining   the average number of bits required to reach a solution of the optimization problem within a target accuracy. It is shown that the proposed BC-rule preserves   {\it linear} convergence of the unquantized algorithms, and a trade-off between convergence rate and communication cost under {ANQ-based} quantization is characterized.
Numerical results validate our theoretical findings and show that distributed algorithms equipped with the proposed ANQ have more favorable communication cost than algorithms using state-of-the-art quantization rules.
\end{abstract}

\section{Introduction} \label{sec:intro}
We study distributed optimization  over a network of $m$ agents modeled as an undirected (connected) graph. We consider  mesh  networks, that is, arbitrary topologies with no central hub connected to all the other agents, where each agent can communicate with its immediate neighbors (master/worker architectures can be treated as a special case). The $m$ agents aim at solving cooperatively the optimization problem
\begin{equation}\label{eq:P}
\min_{{\bf x} \in \mathbb R^d} \quad \underbrace{\frac{1}{\m}\sum_{i=1}^\m f_i({\bf x})}_{=F(\mathbf{x})}+r(\mathbf{x}), \tag{P}  
\end{equation}
where each $f_i$ is the local cost function of agent $i$, assumed to be smooth, convex,  and known only to the agent;  $r:\mathbb{R}^d\to [-\infty, \infty]$ is a non-smooth, convex (extended-value) function  {known to all agents}, which may be used to force shared constraints or some structure on the solution (e.g., sparsity); and the  {global}   loss $F:\mathbb{R}^d\to \mathbb{R}$ (or in some cases each local loss $f_i$) is assumed to be strongly convex on the domain of $r$. This setting is fairly general and finds applications in several areas, including network information processing, telecommunications, multi-agent control, and machine learning (e.g., see 
 \cite{Molzahn2017,Nedic2018,Yang2019}).

Since the functions $f_i$ can only be accessed locally and routing local data to other agents is infeasible or highly inefficient, solving (\ref{eq:P}) calls for the design of distributed algorithms that alternate between a local computation procedure at each agent's side and some  rounds of  communication among neighboring nodes. While most existing works  focus on \emph{ad-hoc} solution methods, here we consider a \emph{general} distributed algorithmic framework,
encompassing algorithms whose dynamics are modeled by the  fixed-point iteration
\begin{align}\label{eq:fixed-point}
{\bf z}^{k+1} = \tilde{\mathcal A}\big({\bf z}^{k}\big),
\end{align}
where ${\bf z}^{k}$ is the updating variable at iteration $k$ and  $\tilde{\mathcal A}$  is a mapping that embeds the local computation and communication steps, whose fixed point  {typically}   coincides with solutions of (\ref{eq:P}).  This model encompasses several distributed algorithms over different network architectures, each one corresponding to a specific expression  of
$\mathbf z$ and $\tilde{\mathcal A}$--see \secref{Sec:framework}  for some  examples.

By assuming that $F$ is strongly convex,   we explicitly target distributed schemes in the form   (\ref{eq:fixed-point}) that converge to solutions of (\ref{eq:P}) at {\it linear rate}.
 Furthermore, since the cost of communications is often the bottleneck for distributed computing when compared with local 
 (possibly parallel) computations  (e.g., \cite{Bekkerman_book11, Lian17}), we achieve communication efficiency by embedding the  iterates (\ref{eq:fixed-point}) with {quantized} communication protocols. 
 
 Quantizing communication steps of distributed optimization algorithms have received significant  attention in recent years--see Sec.~\ref{sec:literature} for a comprehensive overview of the state-of-the-art.  {Here we only point out that existing distributed algorithms   over {\it mesh networks} are applicable only to {\it unconstrained} instances of (\ref{eq:P}) and  {\it smooth} objective functions (i.e., $r\not\equiv 0$). Furthermore, 
 even in these special instances,
 such schemes   rely on quantization rules that subsume  some  scalar signals to be encoded with   negligible  error--the typical example is the renowned {\it compression} protocol \cite{Koloskova2019}. While in practice this is achieved by quantizing at the {\it machine precision} (e.g., $32$ or $64$ bit floating-point), on the theoretical side, existing convergence analyses become elusive.
  In such cases, the assessment of their convergence is left to simulations.}

The     goal of this paper is to design a  black-box quantization mechanism for the  class of distributed algorithms (\ref{eq:fixed-point}) applicable to Problem \eqref{eq:P} over mesh networks that preserves their linear convergence while employing communications quantized with {\it finite-bit}  {(below machine precision)}.   As discussed next, this is an open problem.

\subsection{Summary of main contributions} \label{subsec:contribution}
In a nutshell, we summarize our major contributions as follows.

\noindent $\bullet$ \textbf{A black-box quantization model for (\ref{eq:fixed-point}):}
We propose a novel black-box model that introduces quantization in the communication steps  of     linearly convergent  distributed algorithms cast in the form  \eqref{eq:fixed-point}.  Our approach paves the way to a  {\it unified} design of quantization rules and analysis of their impact on the convergence rate of a gamut of distributed algorithms. This constitutes a major departure from the majority of existing studies focusing on ad-hoc algorithms and quantization rules, which in fact are   special instances  of our framework. Furthermore, our model brings for the first time quantization to distributed algorithms applicable to composite  optimization problems (i.e., \eqref{eq:P} with $r\not \equiv 0$). This is particularly relevant in machine learning applications, where empirical risk minimization problems call for (non-smooth) regularizations or constraints to control the complexity of the solution (e.g., to enforce sparsity or low-rank structure) and avoid overfitting.

\noindent $\bullet$ \textbf{Preserving linear convergence  of (\ref{eq:fixed-point})  under quantization:}  To enable quantization   below the machine precision, we provide  a novel
{\it biased compression} rule (the BC-rule)
 on the quantizer design equipping the proposed  black-box model, which allows to preserve {\it linear} convergence of the distributed algorithms while using a {\it finite} number of bits  {(below machine precision)} and  {\it without} altering their original tuning. Our condition encompasses several deterministic and random  quantization rules as special cases, new and old \cite{Kashyap2007, Aysal2008, Nedic2009, Kar2010, Li2011, Rajagopal2011, Lavaei2012, Thanou2013, Zhu2015, ElChamie2016, Li2017, Lee2017conf, Lee2018CDC, Lee2018QNEXTconf, Kajiyama2020, Magnusson2020, Lee2020}. 

\begin{table*}[t]
\footnotesize
\centering
	\begin{tabular}{| >{\centering\arraybackslash}m{1.4in}| >{\centering\arraybackslash}m{0.9in}
	| r 
	c
	l |}
    \hline
    \textbf{\bf Algorithm} &\textbf{\bf Problem, network} & \textbf{[Bits/agent/iteration]} 
    \!\!\!\!\!\!& $\!\!\!\times\!\!\!$ &\!\!\!\!\!\!
    \textbf{[Iterations to $\varepsilon$-accuracy]}  \\\hline
    \hline
    \multicolumn{5}{|c|}{{\bf State-of-the-art Quantized Distributed Algorithms}}
  \\ \hline
  Q-GD over star \cite{Magnusson2020}
    & \eqref{eq:P} with $r\equiv 0$, star & ${\mathcal O} \Big(d\log\big(1+\kappa d\big)$
    \!\!\!\!\!\!& $\!\!\!\times\!\!\!$ &\!\!\!\!\!\!
    $\kappa\log (d/\varepsilon)\Big)$
    \\\hline
    Q-NEXT \cite{Lee2018QNEXTconf,Kajiyama2020} & \eqref{eq:P} with $r\equiv 0$, mesh  & &\!\!\!\!\!\!N/A\!\!\!\!\!\!& \\ \hline
    Q-Primal-Dual \cite{Magnusson2020}
    & \eqref{eq:P} with $r\equiv 0$, mesh & 
     &\!\!\!\!\!\!N/A\!\!\!\!\!\!&
    \\\hline
    Primal-Dual (rand-$K$ compr., Option-D)~\cite{Kovalev2020}**
    & \eqref{eq:P} with $r\equiv 0$, mesh & 
    $\mathcal O\Big(d(B+\log d)$
\!\!\!\!\!\!& $\!\!\!\times\!\!\!$ &\!\!\!\!\!\!
     $\frac{\kappa}{1-\rho_2}\log(d/\varepsilon)\Big)$
     \\\hline
    Primal-Dual (Option-D)~\cite{Kovalev2020},
    LEAD  \cite{Liu2021}  (both dit-$K$ compr.)**
    & \eqref{eq:P} with $r\equiv 0$, mesh & 
    $\mathcal O\Big((d+B)$
\!\!\!\!\!\!& $\!\!\!\times\!\!\!$ &\!\!\!\!\!\!
     $\frac{\kappa}{1-\rho_2}\log(d/\varepsilon)\Big)$
    \\\hline
    \hline
    \multicolumn{5}{|c|}{{\bf ANQ-embedded Distributed Algorithms (this work)}}
  \\ \hline
    GD over star & \eqref{eq:P} with $r\equiv 0$, star
& ${\mathcal O} \Big(d\log(1+\kappa)$
\!\!\!\!\!\!& $\!\!\!\times\!\!\!$ &\!\!\!\!\!\!
$\kappa\log\big(d/\varepsilon\big) \Big)$ \\ \hline
    (Prox-)EXTRA, (Prox-)NIDS
     & \eqref{eq:P}, mesh
& ${\mathcal O} \Big(d\log\Big(\max\Big\{1+\kappa, \frac{1}{1- {\rho_2}}\Big\}\Big)$
\!\!\!\!\!\!& $\!\!\!\times\!\!\!$ &\!\!\!\!\!\!
$\max\Big\{\kappa, \frac{1}{1- {\rho_2}}\Big\}\log (d/\varepsilon)\Big)$ \\ \hline
NIDS
     & \eqref{eq:P} with $r\equiv 0$, mesh
& $\mathcal O \bigg(d\log\Big(\max\Big\{1+\kappa,\frac{1}{1-\rho_{2}} \Big\}\Big)$
\!\!\!\!\!\!& $\!\!\!\times\!\!\!$ &\!\!\!\!\!\!
$\max\Big\{\kappa,\frac{1}{1-\rho_{2}} \Big\}
\log(d/\varepsilon)
\bigg)$ \\ \hline
(Prox-)NEXT, (Prox-)DIGing
     & \eqref{eq:P}, mesh
& ${\mathcal O} \Big(d\log\Big(\max\Big\{1+\kappa, \frac{1}{(1- {\rho_2})^2}\Big\}\Big)$
\!\!\!\!\!\!& $\!\!\!\times\!\!\!$ &\!\!\!\!\!\!
$\max\Big\{\kappa, \frac{1}{(1- {\rho_2})^2}\Big\}\log (d/\varepsilon)\Big)$ \\ \hline
Primal-Dual
     & \eqref{eq:P} with $r\equiv 0$, mesh
& $\mathcal O \bigg(d\log\Big(1+\frac{\kappa}{1-\rho_2}\Big)$
\!\!\!\!\!\!& $\!\!\!\times\!\!\!$ &\!\!\!\!\!\!
$\frac{\kappa}{1-\rho_2}
\log(d/\varepsilon)
\bigg)$ \\ \hline
    \end{tabular}
    \caption{
    Comparison of the communication complexity, 
    expressed as number of bits/agent/iteration times the number of iterations to achieve $\varepsilon$-accuracy,
    of the proposed ANQ applied to state-of-the-art distributed optimization algorithms solving various instances of \eqref{eq:P} over star or mesh networks;
    $\kappa$ is the condition number of each $f_i$,
    $d$ is the dimension of $\bf x$,
    $\rho_2$ is the second largest eigenvalue of the gossip matrix ${\bf W}$ used in these algorithms  (note that $\rho_2=0$ for star-networks or fully-connected graphs). See also Corollaries \ref{coro:comm_cost_star}-\ref{coro:comm_cost_PD}. \\ **: the complexities of these schemes are adapted from \cite{Kovalev2020}; $\rho$ and $\rho_\infty$ in
    \cite{Kovalev2020} are both $\mathcal O(1/(1-\rho_2))$;
    the compression rate is chosen as $\Theta(1)$ to match the iterations to $\varepsilon$-accuracy of the ANQ-Primal-Dual, hence $K=\Theta(d)$ for rand-$K$ and
    $S=2^{K-1}-1=\Theta(\sqrt{d})$ for dit-$K$; 
    $B$ is the number of bits used to encode each scalar \emph{with negligible loss in precision} (as required by the convergence theory therein).
    \label{table:related_work} }
\end{table*} 

\noindent $\bullet$ \textbf{A novel finite-bit quantizer:} To implement the BC-rule, we also propose  a novel \emph{finite-bit} quantizer  {(below machine precision)} fulfilling the BC-rule along with a communication-efficient  bit-encoding/decoding rule which enables transmissions on   digital channels.
The resulting \emph{Adaptive encoding Non-uniform Quantizer} (ANQ) adapts the number of bits of the output (discrete representation) based upon the input signal. By doing so, it achieves a more communication-efficient design than existing quantizers that adopt a fixed number of bits
based on a predetermined fixed \cite{Kashyap2007, Aysal2008, Nedic2009, Kar2010, Rajagopal2011, Lavaei2012, Zhu2015, ElChamie2016, Lee2017conf, Lee2018CDC, Lee2020} or shrinking range \cite{Li2011, Thanou2013, Li2017, Lee2018QNEXTconf, Magnusson2020, Kajiyama2020} of the input signal,
or that rely on transmissions of scalar values at machine precision
\cite{Alistarh2017, Alistarh2018, Karimireddy2018, Stich2018, Tang2018, Koloskova2019, Zhang2019, Zheng2019, Beznosikov2020, Gorbunov2020, Kovalev2020, Stich2020, Taheri2020, Haddadpour2021, Liao2021, Liu2021, Zhang2021}.

\noindent $\bullet$ \textbf{Communication complexity:}  We derive a unified complexity analysis for
{\it any} distributed algorithm belonging to our black-box model, solving \eqref{eq:P} (possibly with $r\not \equiv 0$)
 over {\it mesh networks}. Specifically, we prove that, under suitable conditions and proper tuning (see Theorem \ref{thm:comm_cost_mesh}),  an  $\varepsilon$-solution of (\ref{eq:P}) (using a suitable optimality measure) is achieved by any of such distributed algorithms in 
 $${\mathcal O} \Big(\frac{1}{1-\RTUQ}   \log( d/\varepsilon)\Big)\quad \text{iterations},$$
  using  
$${\mathcal O} \Big(d\log\Big(1+\frac{1}{1-\RTUQ}\Big)\Big)\quad \text{bits/agent/iteration},$$
where  $\RTUQ\in (0,1)$ is the convergence rate of the distributed algorithm our quantization method is applied to, which depends on the condition number of the agents' function as well as network parameters.
Table~\ref{table:related_work} customizes the above result   to a variety of  state-of-the-art  distributed schemes subject to quantization, using the explicit expression of $\lambda$ (see Table~\ref{table:lambda} and  Corollaries~\ref{coro:comm_cost_star}-\ref{coro:comm_cost_PD}). This permits for the first time to benchmark several distributed algorithms under quantization,
{and compare them with other state-of-the-art quantization schemes (Table~\ref{table:related_work})}. Notice that the proposed methods compare favorably with existing ones and, remarkably, they achieve $\varepsilon$-accuracy with the same number of iterations (in a $\mathcal O$-sense) as their unquantized counterparts.

\noindent $\bullet$ \textbf{Numerical evaluations:}   
Finally, we extensively validate our theoretical findings on smooth and non-smooth regularized linear and logistic regression problems. Among others, our evaluations show that 1) linear convergence of all distributed algorithms  is preserved under finite-bit quantization based upon the proposed BC-rule;  2) as predicted by our analysis,   the rate approaches that of their unquantized counterpart  {(implemented at machine precision)} when a sufficient  number of bits is used; 3)  the proposed ANQ quantization outperforms, in terms of both convergence rate and communication cost, existing quantization schemes operating 
below machine precision--i.e., Q-Dual \cite{Magnusson2020} and Q-NEXT \cite{Kajiyama2020}--or at
machine precision--e.g., those relying on the conventional compression rule, such as  LEAD \cite{Liu2021} and COLD \cite{Zhang2021}.

\subsection{Related works}\label{sec:literature}
The literature on distributed algorithms is vast; here, we review relevant works employing some form of quantization
with linear convergence guarantees \cite{Tang2018, Taheri2020, Beznosikov2020, Kovalev2020, Lee2018QNEXTconf, Kajiyama2020, Magnusson2020, Liao2021},
categorized into those relying on machine precision quantization and those relying on  limited precision quantization.

\noindent{\bf 1) Machine precision quantization schemes \cite{Tang2018, Taheri2020, Beznosikov2020, Kovalev2020, Liao2021}:}
Distributed algorithms employing quantization in the agents' communications are proposed in \cite{Tang2018, Taheri2020, Beznosikov2020, Kovalev2020, Liao2021} for special instances of \eqref{eq:P} with $r\equiv 0$ (i.e., smooth and unconstrained optimization).  {In these schemes,  quantization is implemented by compressing the signal ${\bf x}\in \mathbb{R}^d$ 
 through   a (random or deterministic\footnote{We treat compression rules using deterministic mappings $\mathcal{Q}^k$ as special cases of the random ones; in this case, the expected value operator will just return the deterministic value argument.})  \emph{compression operator} $\mathbf{x}\mapsto \mathcal{Q}(\mathbf{x})$, that satisfies
  the \emph{compression rule} 
\begin{align}
\sqrt{\mathbb E[\Vert\mathcal Q({\bf x}) - {\bf x} \Vert_2^2]}\leq \QNSC \Vert {\bf x}\Vert_2 ,\quad \text{for some}\quad  \QNSC>0. \label{prob_compression_rule}
\end{align}
This rule subsumes the transmission of some scalar signals with {negligible} quantization errors,
e.g., the norm of $\mathbf x$,
requiring thus in theory infinite machine precision  (see Corollary~\ref{coro:converse} for a formal proof).   While quantizing at the  machine precision is a viable strategy in practice,  convergence guarantees of  distributed algorithms relying on  (\ref{prob_compression_rule}) become elusive--they  are established only under {negligible} quantization errors of the transmitted norm signal.
By generalizing the conventional compression rule, the proposed BC-rule overcomes this theoretical limitation and  permits to  explicitly model the communication cost of quantized communications below machine precision.}
As we will demonstrate numerically in Sec.\ref{sec:simulation}, our explicit formulation of the communication cost allows to define more communication efficient quantization schemes than state-of-the-art algorithms relying on machine precision.
 
\noindent{\bf 2) Limited precision quantization schemes \cite{Lee2018QNEXTconf, Kajiyama2020, Magnusson2020}:} While finite-rate quantization   has been extensively studied for average consensus schemes  (e.g.,  {\cite{Aysal2008, Nedic2009, Li2011, Rajagopal2011, Thanou2013, Lee2017conf, Li2017, Lee2018CDC, Lee2020}}), their extension to optimization algorithms over mesh networks is less explored \cite{Lee2018QNEXTconf, Kajiyama2020, Magnusson2020}. Specifically, in our prior work \cite{Lee2018QNEXTconf}, we equipped the NEXT algorithm \cite{Lorenzo2016, Sun2019} with a finite-bit deterministic quantization to solve \eqref{eq:P} with $r\equiv 0$; 
to preserve linear convergence,
the quantizer   shrinks   its input range linearly.  
 An  expression of the convergence rate of the scheme in \cite{Lee2018QNEXTconf} has been  later determined in \cite{Kajiyama2020} along with its scaling properties  with respect to problem, network, and quantization parameters.  

The closest paper to our work is \cite{Magnusson2020}, where the authors proposed  a finite-bit quantization mechanism preserving linear convergence of  some  algorithms
 {cast as \eqref{eq:fixed-point}.}
 Yet, there are several key differences between \cite{Magnusson2020} and our work. First, the  convergence analysis in \cite{Magnusson2020} is applicable only to
algorithms solving
smooth, unconstrained optimization problems, and thus not to  Problem~\eqref{eq:P} with $r\not\equiv  0$. Second, linear convergence under finite-bit quantization is explicitly proved in \cite{Magnusson2020}  only for schemes whose updates utilize current iterate information, namely: gradient descent (GD) over star networks and the  Primal-Dual algorithm in \cite{Uribe2021} over mesh networks. This  leaves open the question of whether   distributed algorithms using historical information--e.g., in the form of gradient tracking or dual variables--are linearly convergent under finite-bit quantization, and under which conditions;  renowned  examples include  EXTRA \cite{Shi2015EXTRA}, AugDGM \cite{Xu2018}, DIGing \cite{Nedic2017}, Harnessing \cite{Qu2018}, and NEXT \cite{Lorenzo2016, Sun2019}. Our work provides a positive answer to these open questions.   Third,  the communication complexity of the  scheme in \cite{Magnusson2020}  is
only provided for star networks, not for mesh networks, which instead is a novel contribution of this work for a wide class of distributed algorithms--see  Table~\ref{table:related_work} and {\secref{sec:commcost}}. Fourth, \cite{Magnusson2020} proposed an  {ad-hoc deterministic} quantization rule while the proposed  BC-rule encompasses several deterministic and random quantizations (including that in \cite{Magnusson2020} as a special case), possibly using a variable  number of bits (adapted to the input signal). As a result, even  when customized to the setting/algorithms in \cite{Magnusson2020}, the  BC-rule leads to more communication-efficient schemes,  both analytically (see \secref{sec:commcost}) and numerically (see \secref{sec:simulation}). 

\subsection{Organization and notation}
The remainder of this paper is organized as follows. \secref{Sec:framework} introduces the proposed black-box model, which casts distributed algorithms in the form~\eqref{eq:fixed-point}. 
\secref{Sec:framework_Q} embeds quantized  communications, introduces the proposed BC-rule, and analyzes the convergence properties. \secref{sec:pro_quant} describes the proposed quantizer, the ANQ, and studies its communication cost. \secref{sec:commcost} investigates the communication complexity, and customizes  the proposed framework and convergence guarantees to  several existing distributed algorithms, equipping them   with the ANQ rule. \secref{sec:simulation} provides some numerical results, while 
\secref{Sec:conclusion}
 draws concluding remarks. All the  proofs of our  results are  presented in the appendix.
\begin{figure}[t]
		\centering
		\includegraphics[width = 2.5 in]{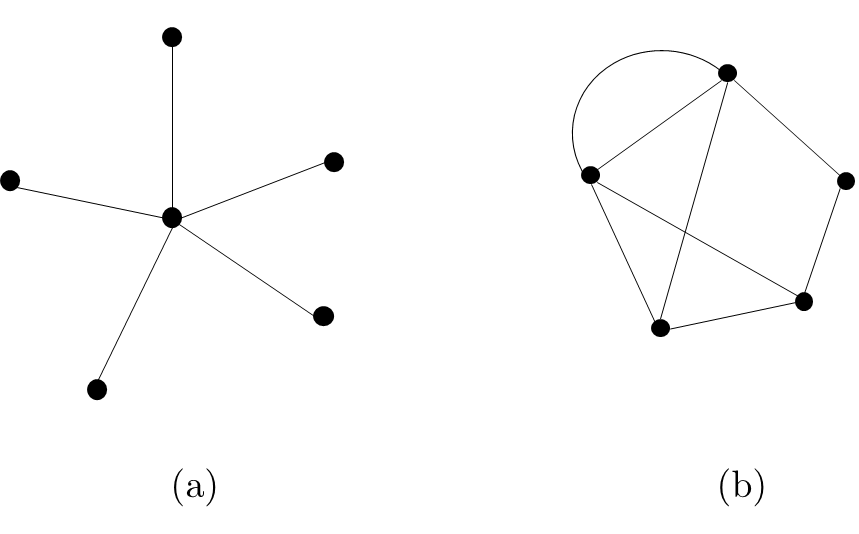}
		\caption{Examples of star network (a)  versus mesh topology (b).} \label{fig:illust_net}
\end{figure}

\textit{Notation:}
Throughout the paper, we will use the following notation. {We denote by $\mathbb Z,\mathbb R$ the set of integers and real numbers, respectively.} For any positive integer $a$,  we define $[a] \triangleq \{1,\cdots, a\}$. 
  We denote by
${\bf 0}, {\bf 1}$, and ${\bf I}$  the  vector of all zeros, the vector of all ones, and the identity matrix, respectively (of appropriate dimension).    For vectors ${\bf c}_1, \cdots, {\bf c}_{\m}$ and a set $\mathcal S \subseteq [\m]$, define ${\bf c}_\mathcal S \triangleq \{{\bf c}_i: i \in \mathcal S\}$. We use $\|\cdot\|$ to denote a norm in the Euclidean space (whose dimension will be clear from the context); when a specific norm is used, such as $\ell_2$  or $\ell_{\infty}$, we will append the associated subscript to $\|\cdot\|$. The $i$th eigenvalue of
 {a real, symmetric} 
matrix $\bf G$ is denoted by $\rho_i({\bf G})$,
ordered in non-increasing order such that $
\rho_{1}({\bf G})\geq\dots
\rho_{i}({\bf G})\geq\rho_{i+1}({\bf G})$. 
We will use a superscript to denote iteration counters of sequences generated by the algorithms, for instance, ${x}^k$ will denote the value of the $x$-sequence at iteration $k$; we will instead use $(x)^k$ for the  $k$th-power of $x$.
Finally,  asymptotic behaviors of functions are captured by the standard  big-$\mathcal O, \Theta$, and $\Omega$ notations: 1) $g(x) = \mathcal O(h(x))$ as $x \to x_0$ if and only if $\limsup_{x \to x_0} \vert g(x)/h(x) \vert\in [0, \infty)$; 2) $g(x) = \Omega(h(x))$ if and only if $h(x) = \mathcal O(g(x))$; and 3) $g(x) = \Theta(h(x))$ if and only if $g(x) = \mathcal O(h(x))= \Omega(h(x))$.

We   model  a network of $\m$ agents  as a fixed, undirected, connected graph $\mathcal {G = (V,E)}$, where $\mathcal V = [\m]$
 is the set of vertices (agents) and $\mathcal {E \subseteq V \times V}$ is the set of edges (communication links); $(i, j) \in \mathcal E$ if there is a link between agents $i$ and $j$, so that the two can send information to each other. We let $\mathcal N_i{=}\{j:(i,j)\in \mathcal E\}$ be the set of neighbors of agent $i$, and assume that $(i,i) \in \mathcal E$, i.e., $i \in \mathcal N_i$. Master/workers  architectures  will be considered as special instances--see Fig.~\ref{fig:illust_net}.

\section{A General Distributed Algorithmic Framework: Exact Communications} \label{Sec:framework}
In this section, we cast   distributed algorithms to solve \eqref{eq:P} in the form (\ref{eq:fixed-point}). As a warm-up, we begin with schemes using only current information to produce the next update (cf. \secref{sec:single-round}). 
We then generalize the model to capture distributed algorithms using historical information  via multiple rounds of communications between computation steps (cf. \secref{sec:multiple-round}).

\subsection{Warm-up: A class of distributed algorithms}\label{sec:single-round}

We cast distributed algorithms in the form (\ref{eq:fixed-point}) by incorporating computations and communications as two separate steps. We use state variable $\mathbf{z}_i$ to capture local information owned by agent $i$ (including  optimization variables)  and $\hat{\bf c}_i$ to denote the signal transmitted by agent $i$ to its neighbors.\footnote{Dimensions of these vectors are   algorithm-dependent and omitted for simplicity, and will be clear from the context.}     
Similarly to \cite{Magnusson2020}, the updates of the $ z,\hat{c}$-variables read: for agent $i\in [m]$,
\begin{align}
\begin{array}{*{20}l}
\hat{\bf c}_{i}^{k} &= \mathcal C_{i}\big({\bf z}_{i}^{k}\big), & \quad \text{(communication step)}  \\
{\bf z}_i^{k+1} &= \mathcal A_i\big({\bf z}_i^{k},  \hat{\bf c}_{{\mathcal N}_i}^{k}\big), & \quad \text{(computation step)}
\end{array}
\tag{M0} \label{eq:M0}
\end{align}
where the function $\mathbf{z}_i\mapsto  \mathcal C_i(\mathbf{z}_i)$ models the processing on the local information $\mathbf{z}_i^k$ at the current iterate,   generating the signal $\hat{\bf c}_{i}^{k}$ transmitted to agent $i$'s neighbors; the function $({\bf z}_i,  \hat{\bf c}_{{\mathcal N}_i})\mapsto \mathcal A_i({\bf z}_i,  \hat{\bf c}_{{\mathcal N}_i})$ produces the update of the  agent $i$'s state variable $\mathbf{z}_i$, based upon the  local information at iteration $k$, and the signals received by its neighbors in $\mathcal N_i$.

\textit{Some examples:}
The algorithmic model \eqref{eq:M0}    captures a variety of distributed algorithms that build updates using single rounds of communications; examples include the renewed    DGD \cite{Nedic2009grad}, NIDS \cite{Li2019}, and the Primal-Dual scheme \cite{Uribe2021}. To show a concrete example, consider   DGD, 
which aims at solving a special instance of \eqref{eq:P} with $r\equiv 0$; agents' updates read
\begin{align*}
{\bf x}_i^{k+1} = \Big(\sum_{j=1}^m w_{ij}
{\bf x}_j^{k}\Big)- \STEP \nabla f_i\big({\bf x}_i^{k}\big),\quad i\in [m],
\end{align*}
where ${\bf x}_i^{k}$ is the local copy owned by agent $i$ at iteration $k$ of the optimization variables $\bf x$, $\gamma$ is a step-size,
and $w_{ij}$'s are nonnegative weights properly chosen and compliant with  the  graph  $\mathcal{G}$ (i.e., $w_{ij} > 0$ if $(i,j) \in \mathcal E$; and  $w_{ij} =0$ otherwise). It is not difficult to check that DGD can be cast in the form \eqref{eq:M0} 
by letting \begin{align*}
\begin{cases}
    {\bf z}_i^{k} =\hat{\bf c}_i^{k}= {\bf x}_i^{k}\\\mathcal A_i({\bf z}_i^{k}, \hat{\bf c}_{{\mathcal N}_i}^{k})
=\Big(\sum_{j=1}^m w_{ij}
\hat{\bf c}_j^{k}\Big)- \STEP \nabla f_i({\bf z}_i^{k}).
\end{cases}
\end{align*}

Despite its generality, model \eqref{eq:M0} leaves out several important   distributed algorithms, specifically, the majority of schemes employing correction of the gradient direction based on past state information--these are the best performing algorithms to date. Examples include EXTRA {\cite{Shi2015EXTRA}},    DIGing {\cite{Nedic2017}} and their proximal version,  NEXT/SONATA {\cite{Lorenzo2016, Sun2019,Sun2019Math}},  and the ABC framework  \cite{Xu2020}, just to name a few. Consider, for instance, NEXT/SONATA: 
\begin{align}
\begin{cases}
{\bf x}_i^{k+1} = \sum_{j \in \mathcal N_i} w_{ij} \big({\bf x}_j^{k} - \STEP {\bf y}_j^{k}\big)\\
{\bf y}_i^{k+1} = \sum_{j \in \mathcal N_i} w_{ij} \big({\bf y}_j^{k} + \nabla f_j({\bf x}_j^{k+1}) - \nabla f_j({\bf x}_j^{k})\big). 
\end{cases}
\label{eq:NEXT}
\end{align}
Clearly, these updates do not fit model \eqref{eq:M0}: the update of the $y$-variable uses information from two iterations ($k$ and  $k+1$).  
 This  calls for a more general model, introduced  next.

 \begin{table*}[t]
	\small\centering
	\begin{tabular}{| >{\centering\arraybackslash}m{2in} | >{\centering\arraybackslash}m{0.9in} | >{\centering\arraybackslash}m{0.5in} | >{\centering\arraybackslash}m{2in}|}    
    \hline
    \textbf{\bf Algorithm} & \textbf{Problem} & \textbf{Network} & \textbf{Convergence rate $\RTUQ$}  \\ \hline
    GD over star networks \cite{Nesterov2014} & \eqref{eq:P} with $r\equiv 0$ & Star & $\frac{\kappa-1}{\kappa+1}$\\ \hline
    (Prox-)EXTRA \cite{Xu2020} & \eqref{eq:P} & Mesh &{$\max\Big\{\frac{\kappa}{\kappa+1}
, \sqrt{\rho_{2}\big({\bf W}\big)}\Big\} $}\\ \hline
        (Prox-)NIDS \cite{Xu2020} & \eqref{eq:P} & Mesh &$\max\Big\{\frac{\kappa-1}{\kappa+1}, \sqrt{\rho_2\big({\bf W}\big)}\Big\}$\\ \hline
    NIDS \cite{Li2019} & \eqref{eq:P} with $r\equiv 0$ & Mesh & $\max\Big\{\sqrt{1-\kappa^{-1}}, \sqrt{\rho_{2}({\bf W})} \Big\}$\\ \hline
    (Prox-)NEXT \cite{Xu2020} & \eqref{eq:P} & Mesh & $\max\big\{\frac{\kappa-1}{\kappa+1}, \sqrt{1{-}(1{-}\rho_2({\bf W}))^2}\big\}$\\ \hline
     (Prox-)DIGing \cite{Xu2020} & \eqref{eq:P} & Mesh & 
     {$\max\Big\{\frac{\kappa}{\kappa+1}
, \sqrt{1{-}(1{-}\rho_2({\bf W}))^2}\Big\} $}
\\ \hline
    Primal-Dual \cite{Magnusson2020, Uribe2021} & \eqref{eq:P} with $r\equiv 0$ & Mesh & $\dfrac{\frac{\rho_1({\bf L})}{\rho_{m-1}({\bf L})}-\frac{1}{\kappa}}{\frac{\rho_1({\bf L})}{\rho_{m-1}({\bf L})}+\frac{1}{\kappa}}$\\ \hline
    \end{tabular}
    \caption{Convergence rate of some representative linearly convergent distributed algorithms cast in the proposed algorithmic framework (\ref{eq:M}), under the tuning described in  Appendix~\ref{app:examples}.   $\kappa$, $L$, and $\mu$ is the condition number, the smoothness constant and the strong convexity constant of each $f_i$, respectively, and $\bf W$ is the gossip matrix used in the communication steps of the algorithms; for the Primal-Dual algorithm, $\bf L$ is the graph Laplacian, defined in Appendix~\ref{subsec:Dual}.
    \label{table:lambda}}    
\end{table*}

\subsection{Proposed general model (using historical information)}\label{sec:multiple-round} 
We generalize the algorithmic model \eqref{eq:M0} as follows:  for all $i\in [m]$,
\begin{align} 
\!\!\!\!\!\!\!\!\begin{array}{*{20}l}
\left.\begin{array}{*{20}l}
\hat{\bf c}_{i}^{k,1} = \mathcal C_{i}^1\big({\bf z}_i^{k},{\bf 0}_{{\mathcal N}_i}\big), \\
 \qquad\vdots \\
\hat{\bf c}_{i}^{k,\q} = \mathcal C_{i}^{\q}\big({\bf z}_i^{k},\hat{\bf c}_{{\mathcal N}_i}^{k,\q-1}\big),
\end{array}
\right\} \begin{array}{l}
\text{(multiple}\\
\text{communication rounds)}
\end{array}
\\
\begin{array}{*{20}l}
{\bf z}_i^{k+1} = \mathcal A_i\big(
{\bf z}_i^{k}, \hat{\bf c}_{{\mathcal N}_i}^{k,1},\cdots,\hat{\bf c}_{{\mathcal N}_i}^{k,\q} \big), \quad \text{(computation step)}
\end{array} 
\end{array} \tag{M} \label{eq:M}
\end{align}
These updates embed $R\geq 1$ rounds of local communications, via the functions $(\mathbf{z}_i, \hat{\bf c}_{\mathcal N_i}^{\ITERR-1})\mapsto  \mathcal C_i^{\ITERR}(\mathbf{z}_i, \hat{\bf c}_{\mathcal N_i}^{\ITERR-1})$; the function
$(
{\bf z}_i, \hat{\bf c}_{{\mathcal N}_i}^{1},\cdots,\hat{\bf c}_{{\mathcal N}_i}^{\q} ) \mapsto \mathcal A_i(
{\bf z}_i, \hat{\bf c}_{{\mathcal N}_i}^{1},\cdots,\hat{\bf c}_{{\mathcal N}_i}^{\q} )$ updates the local state by
  using the $\hat{\bf c}_{\mathcal N_i}$ signals
 received from the neighbors during 
 all $R$ rounds of communications,
 along with $\mathbf{z}_i$.
 Stacking agents'    state-variables $\mathbf{z}_i$, communication signals $\hat{\mathbf{c}_i}$, and mappings $\mathcal{C}_i^s$ and $\mathcal A_i$ into the respective vectors $\mathbf{z}$,   $\hat{\mathbf{c}}$, $\mathcal C^s$ and $\mathcal A$, we can rewrite \eqref{eq:M} in the compact form
\begin{align*}
\begin{array}{*{20}l}
\left.\begin{array}{*{20}l}
\hat{\bf c}^{k, 0} = {\bf 0}, \\
\hat{\bf c}^{k, \ITERR} = \mathcal C^{\ITERR}\big({\bf z}^{k},\hat{\bf c}^{k, \ITERR-1}\big),   \ITERR\in[\q],
\end{array}
\right\} \begin{array}{l}
\text{(multiple}\\
\text{communication rounds)}
\end{array}
\\
\begin{array}{*{20}l}
{\bf z}^{k+1} = \mathcal A\big(
{\bf z}^{k},\hat{\bf c}^{k,1},\cdots,\hat{\bf c}^{k,\q}\big).\hspace{1.3cm} \text{(computation step)} \\
\end{array} 
\end{array}
\end{align*}
Absorbing the communication signals $\hat{\mathbf{c}}^{k,s}$ in the mapping $\mathcal{A}$, we can finally write the above system as a fixed-point iterate on the $z$-variables only: 
\begin{align}\label{eq:map_A_tilde}
\tag{M'} 
 \mathbf{z}^{k+1}=\tilde{\mathcal A}(\mathbf{z}^k)\triangleq& \mathcal A\Big(
{\bf z}^{k},\mathcal C^1\big({\bf z}^k,{\bf 0}\big),\cdots,\\
&\mathcal C^\q\big({\bf z}^k, \mathcal C^{\q-1}\big({\bf z}^k, \cdots \mathcal C^1\big({\bf z}^k, {\bf 0}\big) \cdots\big)\big)\Big). \nonumber
\end{align}
 {Under suitable conditions, the iterates (\ref{eq:map_A_tilde})}
convergence to fixed-points $\mathbf{z}^\infty=\tilde{\mathcal A}(\mathbf{z}^\infty)$ of the mapping $\tilde{\mathcal A}$, possibly   constrained to a set $\mathcal Z\ni \mathbf{z}^\infty$. 
The convergence rate depends on the properties of  $\tilde{\mathcal A}$; here we focus on {\it linear} convergence, which can be established under the following standard condition.

\begin{assumption} \label{assump:R_conv_z}
Let $\tilde{\mathcal A}: \mathcal{Z}\to \mathcal{Z}$;  the following hold: (i)  $\tilde{\mathcal A}$
admits a fixed-point ${\bf z}^\infty$; and (ii)  
   $\tilde{\mathcal A}$ is $\RTUQ$-pseudo-contractive  on  $\mathcal Z$ w.r.t.  some norm $\Vert \bullet \Vert$, that is,   there exists  $\RTUQ \in (0, 1)$
such that 
\begin{align*}
&\Vert\tilde{\mathcal A}({\bf z})-{\bf z}^\infty\Vert \leq \RTUQ \cdot \Vert{\bf z}-{\bf z}^\infty\Vert, \quad \forall {\bf z}\in \mathcal Z. 
\end{align*}
Without loss of generality,
the norm $\|\bullet\|$  is scaled such that $\Vert\bullet\Vert_2 \leq   \Vert\bullet\|$.\footnote{This is always possible since
$\Vert \bullet \Vert$ is a norm defined on a finite-dimensional field.}
\end{assumption}
The following convergence result follows readily from Assumption \ref{assump:R_conv_z} and \cite[Ch. 3, Prop.~1.2]{Bertsekas1989}. 
\begin{theorem} \label{thm:linear_conv_orig}
Let $\tilde{\mathcal A}:\mathcal{Z} \to \mathcal{Z}$ satisfy Assumption~\ref{assump:R_conv_z}. Then: i)
the fixed point ${\bf z}^\infty$ is unique; and ii) the sequence $\{{\bf z}^{k}\}$ generated by the update  \eqref{eq:map_A_tilde} converges Q-linearly to ${\bf z}^\infty$ w.r.t. the norm $\Vert \bullet\Vert$ at rate $\RTUQ$, i.e., 
  $
  \big\Vert{\bf z}^{k+1}-{\bf z}^\infty\big\Vert \leq \RTUQ\cdot\big\Vert{\bf z}^{k}-{\bf z}^\infty\big\Vert.
$
\end{theorem}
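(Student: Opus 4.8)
The plan is to use the pseudo-contractive inequality of Assumption~\ref{assump:R_conv_z}(ii) essentially verbatim; both claims fall out as one-line consequences, so no fixed-point/completeness machinery (as in the cited \cite[Ch.~3, Prop.~1.2]{Bertsekas1989}) needs to be reproduced beyond invoking it. The key observation is that the contractivity is anchored at the particular fixed point ${\bf z}^\infty$ whose existence is postulated in part (i) of the assumption, rather than being a contraction between arbitrary pairs of points; all arguments must therefore compare iterates or candidate fixed points \emph{against} ${\bf z}^\infty$.

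For uniqueness (claim i), I would suppose ${\bf z}'\in\mathcal Z$ is an arbitrary fixed point of $\tilde{\mathcal A}$ and instantiate the inequality $\Vert\tilde{\mathcal A}({\bf z})-{\bf z}^\infty\Vert \leq \RTUQ\,\Vert{\bf z}-{\bf z}^\infty\Vert$ at ${\bf z}={\bf z}'$. Since $\tilde{\mathcal A}({\bf z}')={\bf z}'$, this reads $\Vert{\bf z}'-{\bf z}^\infty\Vert \leq \RTUQ\,\Vert{\bf z}'-{\bf z}^\infty\Vert$, and as $\RTUQ\in(0,1)$, the only way a nonnegative quantity can be bounded by a strictly smaller multiple of itself is for it to vanish; hence $\Vert{\bf z}'-{\bf z}^\infty\Vert=0$, i.e. ${\bf z}'={\bf z}^\infty$. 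This shows every fixed point coincides with ${\bf z}^\infty$.

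For the rate (claim ii), I would simply substitute the iteration ${\bf z}^{k+1}=\tilde{\mathcal A}({\bf z}^{k})$ into the same inequality evaluated at ${\bf z}={\bf z}^{k}$, giving $\Vert{\bf z}^{k+1}-{\bf z}^\infty\Vert = \Vert\tilde{\mathcal A}({\bf z}^{k})-{\bf z}^\infty\Vert \leq \RTUQ\,\Vert{\bf z}^{k}-{\bf z}^\infty\Vert$, which is exactly the claimed Q-linear contraction. Since $\tilde{\mathcal A}$ maps $\mathcal Z$ into $\mathcal Z$ and the initialization satisfies ${\bf z}^{0}\in\mathcal Z$, an easy induction confirms ${\bf z}^{k}\in\mathcal Z$ for all $k$, so the inequality is legitimately applicable at every step.

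There is essentially no hard step here: the result is a direct restatement of pseudo-contractivity plus the observation $\RTUQ<1$. The only point deserving care, and the one I would flag explicitly, is the distinction between a \emph{global} contraction and the \emph{pointwise-anchored} pseudo-contraction actually assumed; this is why the uniqueness argument must play any candidate fixed point against ${\bf z}^\infty$ (whose existence is assumed separately) and cannot instead compare two unknown fixed points to each other, which would require a two-point Lipschitz bound that Assumption~\ref{assump:R_conv_z} does not provide.
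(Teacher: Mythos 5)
Your proposal is correct and matches the paper's treatment: the paper simply invokes Assumption~\ref{assump:R_conv_z} together with \cite[Ch.~3, Prop.~1.2]{Bertsekas1989}, and your two one-line arguments (instantiating the pseudo-contraction at a candidate fixed point for uniqueness, and at ${\bf z}^k$ for the rate) are exactly the content of that standard result. Your explicit remark that the inequality is anchored at ${\bf z}^\infty$ rather than being a two-point contraction is a correct and worthwhile clarification, but it does not change the argument.
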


\textit{Discussion:} 
Our model treats the underlying unquantized algorithm as a black-box with convergence rate $\RTUQ$, which depends on the optimization problem parameters--the smoothness and strong convexity constants  $L$ and $\mu$ of the agents' functions (often via the condition number $\kappa$)--and the network connectivity  {(spectral properties of $\bf W$)}.  Table~\ref{table:lambda} collects some   representative examples.

The algorithmic framework (\ref{eq:M}) encompasses a variety of distributed algorithms, while Theorem~\ref{thm:linear_conv_orig} captures their convergence properties; in addition to the schemes covered by (\ref{eq:M0}) as a special case when $R=1$, (\ref{eq:M}) can also represent EXTRA \cite{Shi2015EXTRA} and its proximal version \cite{Xu2020}, NEXT {\cite{Lorenzo2016, Sun2019,Sun2019Math}} and its proximal version \cite{Xu2020}, DIGing \cite{Nedic2017} and its proximal version \cite{Xu2020}, and prox-NIDS \cite{Xu2020}. Appendix~\ref{app:examples} provides specific expressions for the mappings $\mathcal{A}$ and $\mathcal{C}^s$ for each of the above algorithms, along with their convergence properties  under Theorem~\ref{thm:linear_conv_orig}; 
here, we  elaborate on the NEXT algorithm \eqref{eq:NEXT} as an example.  
It can be cast in the form \eqref{eq:M}  by using $R=2$ rounds of communications and letting 
\begin{align*}
&{\bf z}_i^{k} =
\left[
\begin{array}{c}
{\bf x}_i^{k} \\
{\bf y}_i^{k}      
\end{array}
\right],\quad
\hat{\bf c}_i^{k,1} = {\bf x}_i^{k} - \STEP {\bf y}_i^{k}, \\
&\hat{\bf c}_i^{k,2} = {\bf y}_i^{k} + \nabla f_i\bigg(\sum_{j\in\mathcal N_i} w_{ij}\hat{\bf c}_j^{k,1}\bigg) -  \nabla f_i({\bf x}_i^k), \text{and}\\
&
{\bf z}_i^{k+1}=
\mathcal A_i({\bf z}_i^{k}, \hat{\bf c}_{{\mathcal N}_i}^{k,1}, \hat{\bf c}_{{\mathcal N}_i}^{k,2})
=\left[
\begin{array}{c}
\sum_{j\in\mathcal N_i} w_{ij}\hat{\bf c}_j^{k,1} \\
\sum_{j\in\mathcal N_i} w_{ij}\hat{\bf c}_j^{k,2}    
\end{array}
\right]. 
\end{align*}

\section{A General Distributed Algorithmic Framework: Quantized Communications} \label{Sec:framework_Q}
In this section, we equip the distributed algorithmic framework   \eqref{eq:M} with quantized communications. The  communication channel between any two agents  is modeled as a noiseless digital channel:  only quantized signals  are received with no errors.
This means that, in each of the communication rounds,  the signals $\hat{\mathbf{c}}_j^{k,1},\ldots, \hat{\mathbf{c}}_j^{k,R}$,  $j\in \mathcal{N}_i$,   received by agent $i$ 
may no longer coincide with the intended, unquantized ones $\mathcal{C}_j^1(\mathbf{z}_j^k, \mathbf{0}_{\mathcal N_j}),\ldots ,\mathcal{C}_j^R(\mathbf{z}_j^k, \hat{\mathbf{c}}_{\mathcal N_j}^{k,R-1})$,   generated at the transmitter side of agents $j\in \mathcal{N}_i$. 
This calls for a proper encoding/decoding mechanism that  transfers, via quantized communications, the aforementioned unquantized signals at the receiver sides  with limited distortion. Here, we leverage differential encoding/decoding techniques  \cite{Li2011} coupled with a novel quantization rule. 

 We begin by recalling  the idea of quantized  differential encoding/decoding in the context of  a  point-to-point communication--the same mechanism  will be then embedded  in  the communication of the distributed multi-agent framework \eqref{eq:M}.  Consider a transmitter-receiver pair; let $\mathbf{c}^k$ be the unquantized information generated at iteration $k$, intended to be transferred to the receiver over the digital channel, and let  $\hat{\mathbf{c}}^k$ be the estimate of  $\mathbf{c}^k$, built using quantized information. The differential  encoding/decoding rule reads:  {$\hat{\bf c}^{0}= {\bf 0}$, and} for     $k\geq 1$,
 \begin{align}\label{eq:code-decod-diff}
\begin{cases}
{\bf q}^{k} &= \mathcal Q^{k}({\bf c}^{k} - \hat{\bf c}^{k-1}), \\
\hat{\bf c}^{k} &= \hat{\bf c}^{k-1} + {\bf q}^{k}, 
\end{cases}
\end{align}
where $\mathcal Q^{k}$ is the quantization operator (a map from real vectors to the set of quantized points), possibly dependent on iteration $k$. In words, at each iteration, the encoder quantizes the \emph{prediction error}  ${\bf c}^{k} - \hat{\bf c}^{k-1}$ rather than the current estimate ${\bf c}^{k}$, generating the quantized signal ${\bf q}^{k}$, which is then transmitted  {over the digital channel}.  The estimate $\hat{\bf c}^{k}$ of ${\bf c}^{k}$   is then built from  ${\bf q}^{k}$ using  a one-step prediction rule. 
Since  ${\bf q}^{k}$ is received unaltered,   $\hat{\bf c}^{k}$ is identical at the transmitter's and receiver's sides. 
Note that, when quantization errors are negligible ${\bf q}^{k}=\mathcal Q^{k}({\bf c}^{k} - \hat{\bf c}^{k-1})\approx {\bf c}^{k} - \hat{\bf c}^{k-1}$, the estimate reads  $\hat{\bf c}^{k} = \hat{\bf c}^{k-1} + {\bf q}^{k}\approx  \hat{\bf c}^{k-1} + {\bf c}^{k} - \hat{\bf c}^{k-1}={\bf c}^{k}$.

 We can now introduce our distributed algorithmic framework using quantized communications, as described in Algorithm~\ref{alg};  it embeds the  differential encoding/decoding rule \eqref{eq:code-decod-diff} in each communication round of model \eqref{eq:M}. The fixed-point based formulation of  Algorithm~\ref{alg} then reads: for $i\in [m]$, 
\begin{align*}
\begin{array}{*{20}l}
\left.\begin{array}{*{20}l}
{\bf c}_{i}^{k,1} = \mathcal C_{i}^1\big({\bf z}_i^{k},{\bf 0}_{{\mathcal N}_i}\big), \\
\hat{\bf c}_{i}^{k,1} = \hat{\bf c}_{i}^{k-1,1} + \mathcal Q_i^k\big({\bf c}_{i}^{k,1} - \hat{\bf c}_{i}^{k-1,1}\big), \\
 \qquad \vdots \\
{\bf c}_{i}^{k,\q} = \mathcal C_{i}^\q\big({\bf z}_i^{k},\hat{\bf c}_{{\mathcal N}_i}^{k,\q-1}\big), \\
\hat{\bf c}_{i}^{k,\q} = \hat{\bf c}_{i}^{k-1,\q} + \mathcal Q_i^k\big({\bf c}_{i}^{k,\q} - \hat{\bf c}_{i}^{k-1,\q}\big),
\end{array}
\right\} 
\begin{array}{l}
\text{(multiple}\\
\text{quantized}
\\
\text{communication}
\\
\text{rounds)}
\end{array}
\\
\begin{array}{*{20}l}
{\bf z}_i^{k+1} = \mathcal A_i\big(
{\bf z}_i^{k}, \hat{\bf c}_{{\mathcal N}_i}^{k,1},\cdots,\hat{\bf c}_{{\mathcal N}_i}^{k,\q} \big), \qquad \text{(computation step).}
\end{array} 
\end{array}
\end{align*}

\begin{algorithm}[t]
\caption{Distributed Algorithmic Framework with Quantized Communications} 
\label{alg} 
\begin{algorithmic} 
	\Require   
  $\hat{\bf c}^{-1,\ITERR} \triangleq {\bf 0},$ for all  $\ITERR \in [\q]$; and 
	${\bf z}^{0}\in \mathcal Z$.  	Set $k=0$;\\
  Iteration $k\to k+1$
  \State \texttt{(S.1): Multiple quantized communication rounds} 
   
   \State \quad \textbf{for } $\ITERR=1,\ldots, R$, each agent $i$:
    \begin{itemize}
      \item Computes ${\bf c}_{i}^{k,\ITERR} = \mathcal C_i^{\ITERR}({\bf z}_i^{k},\hat{\bf c}_{{\mathcal N}_i}^{k,\ITERR-1} )$  [with $\hat{\bf c}_{i}^{k,0} \triangleq {\bf 0}$];
      \item Generates  ${\bf q}_{i}^{k,\ITERR} = \mathcal Q_i^{k}({\bf c}_{i}^{k,\ITERR} - \hat{\bf c}_{i}^{k-1, \ITERR})$
      and broadcasts it to its neighbors $\ITERNDT \in \mathcal{N}_i$;
      \item  Upon receiving the signals ${\bf q}_{\ITERNDT}^{k, \ITERR}$ from its neighbors $j\in \mathcal N_i$, it reconstructs $\hat{\bf c}_{\ITERNDT}^{k, \ITERR}$ as  $$\hat{\bf c}_{\ITERNDT}^{k, \ITERR} = \hat{\bf c}_{\ITERNDT}^{k-1, \ITERR} + {\bf q}_{\ITERNDT}^{k, \ITERR}, \quad   \ITERNDT \in \mathcal N_i;$$
    \end{itemize}
   \State\quad\textbf{end} 
   
   \State \texttt{(S.2): Computation Step} \State \quad Each agent $i$ updates its own ${\bf z}_i^{k+1}$ according to  $${\bf z}_i^{k+1} = \mathcal A_i(
{\bf z}_i^{k}, \hat{\bf c}_{{\mathcal N}_i}^{k,1},\cdots, \hat{\bf c}_{{\mathcal N}_i}^{k,\q} ).$$  
\end{algorithmic}
\end{algorithm}

 Stacking agents'    state-variables $\mathbf{z}_i$, signals ${\mathbf{c}_i}$ and    $\hat{\mathbf{c}_i}$, and mappings $\mathcal{C}_i^s$,  $\mathcal A_i$, and $\mathcal{Q}^k_i$ into the respective vectors $\mathbf{z}$,  ${\mathbf{c}}$,  $\hat{\mathbf{c}}$, $\mathcal C^s$, $\mathcal A$, and $\mathcal{Q}^k$, we can rewrite the above steps in compact form as
\begin{align}\tag{Q-M}
\label{Qupdated}
\begin{array}{*{20}l}
\left.\begin{array}{*{20}l}
{\hat{\bf c}^{k,0}} = \mathbf 0,\\
{\bf c}^{k,\ITERR} = \mathcal C^\ITERR\big({\bf z}^{k},\hat{\bf c}^{k,\ITERR-1}\big), \\
\hat{\bf c}^{k,\ITERR} = \hat{\bf c}^{k-1,\ITERR} + \mathcal Q^k\big({\bf c}^{k,\ITERR} - \hat{\bf c}^{k-1,\ITERR}\big),\\
   \qquad \ITERR\in[\q],
\end{array}
\right\}
\begin{array}{l}
\text{(multiple}\\
\text{quantized}
\\
\text{communication}
\\
\text{rounds)}
\end{array}
\\
\begin{array}{*{20}l}
{\bf z}^{k+1} = \mathcal A\big(
{\bf z}^{k}, \hat{\bf c}^{k,1},\cdots,\hat{\bf c}^{k,\q} \big)
. \qquad \text{(computation step).}
\end{array} 
\end{array}
\end{align}
  
Model  (\ref{Qupdated}) paves the way to  a unified  design and  convergence analysis  of several distributed algorithms--all the schemes cast in the form (\ref{eq:M})--employing quantization in the communications, as elaborated next. 
\subsection{Convergence Analysis}
     We begin by establishing sufficient  conditions on the quantization mapping  $\mathcal{Q}^k$ and  algorithmic functions $\mathcal{A}$ and $\mathcal{C}$ in (\ref{Qupdated}) to preserve linear convergence 
  
 $\bullet$ \textbf{On the quantization mapping  $\mathcal{Q}^k$.} A first critical choice is the quantizer  $\mathcal{Q}$ (we omit the dependence on $k$ and $i$ for notation simplicity), including both random and deterministic quantization rules (the latter as a special cases of the former). For random quantization,  the    function $\mathcal{Q}(\mathbf{x})$ is a random variable for any given $\mathbf{x}\in \mathbb{R}^d$, defined on  a suitable probability space (generally dependent on $\mathbf{x}$).  We define the following novel   \emph{biased compression rule} (BC-rule), for each agent $i$.
\begin{definition}[{\bf Biased compression rule}] \label{def:quant}
Given $\mathbf{x}\in {\mathbb R}^d$, $\mathcal Q(\mathbf{x})$
 (possibly, a random variable defined on a suitable probability space) satisfies the BC-rule
with  \emph{bias}  $\QNSB \geq 0$ and \emph{compression rate}  $\QNSC \in [0,1)$ if
\begin{align}
\sqrt{\mathbb E\Big[\big\Vert \mathcal Q(\mathbf x) -  \mathbf x \big\Vert_2^2\Big]} \leq
\sqrt{d}\,\QNSB + \QNSC \Vert \mathbf x\Vert_2, \quad  \forall  \mathbf x \in {\mathbb R}^d.
 \label{eq:BCR}
\end{align}
\end{definition}
When $\mathcal Q({\bf x})$ is a deterministic map, (\ref{eq:BCR}) reduces to \begin{equation}\label{eq:BCR_determin}
\big\Vert \mathcal Q(\mathbf x) -  \mathbf x \big\Vert_2 \leq
\sqrt{d}\,\QNSB + \QNSC \Vert \mathbf x\Vert_2, \quad  \forall  \mathbf x \in {\mathbb R}^d.\end{equation}
{Roughly speaking,  the bias $\QNSB$ determines the basic spacing between {quantization points},  uniform across the entire domain.  On the other hand,  the compression term $\QNSC$ adds a non-uniform   spacing between {quantization points}:  {points} farther away from $\bf 0$ have more  separation. } 
 
The BC-rule encompasses and generalizes several existing compression and quantization rules proposed in the literature for specific algorithms, deterministic  \cite{Alistarh2018, Karimireddy2018, Stich2018, Koloskova2019, Zheng2019, Stich2020, Liao2021, Kashyap2007, Nedic2009, Li2011, Lavaei2012, Thanou2013, ElChamie2016, Li2017, Lee2018QNEXTconf, Magnusson2020, Kajiyama2020} and random    {\cite{Aysal2008, Kar2010, Rajagopal2011, Zhu2015, Alistarh2017, Lee2017conf, Lee2018CDC, Stich2018, Tang2018, Koloskova2019, Zhang2019, Beznosikov2020, Gorbunov2020,  Kovalev2020, Lee2020, Stich2020, Taheri2020, Liao2021, Haddadpour2021, Liu2021, Zhang2021 }}.  Specifically,  \textbf{(i)} the compression rules proposed  in  {\cite{Alistarh2017, Stich2018, Tang2018, Koloskova2019, Zhang2019, Beznosikov2020, Gorbunov2020, Kovalev2020, Stich2020, Taheri2020, Liao2021, Haddadpour2021, Liu2021, Zhang2021} (resp. \cite{Alistarh2018, Karimireddy2018, Stich2018, Koloskova2019, Zheng2019, Stich2020, Liao2021}) can be interpreted as  \emph{unbiased} instances of \eqref{eq:BCR} [resp. \eqref{eq:BCR_determin}]}, i.e.,  corresponding to  $\QNSB = 0$. The proof of Lemma \ref{lemma:quant} in \secref{sec:pro_quant} will show that such special instances
theoretically require an infinite number of bits to encode quantized signals, even when the input ${\bf x}$ is bounded. In practice, they are successfully implemented using   finite bits at the machine precision (e.g., to encode  some scalar quantities, such as the norm of the signal to be transmitted \cite{Taheri2020}).  However, their convergence analyses tacitly assume that errors at machine precision are negligible. Hence, these schemes 
  have no performance guarantees when implemented with limited (below machine)  precision quantizations. 
On the other hand, as  it will be seen in \secref{sec:pro_quant},
the bias term $\eta>0$ in the proposed BC-rule guarantees that
quantized signals satisfying such rule can be encoded using a finite number of bits, hence can be implemented with limited precision. \textbf{(ii)} The {quantization} rules in {\cite{Aysal2008, Kar2010, Rajagopal2011, Zhu2015, Lee2017conf, Lee2018CDC, Lee2020} (resp. \cite{Kashyap2007, Nedic2009, Li2011, Lavaei2012, Thanou2013, ElChamie2016, Li2017, Lee2018QNEXTconf, Magnusson2020, Kajiyama2020})} are special instances of  the BC-rule {\eqref{eq:BCR} [resp. \eqref{eq:BCR_determin}]},  with  $\QNSC = 0$. While they can be implemented using   a  finite number of bits {(provided that the signals to be quantized are uniformly bounded)}, they do not take advantage of  the degree of freedom offered  by the compression rate $\QNSC$ to improve communication efficiency, as shown numerically in \secref{sec:sim:cc}.

$\bullet$ \textbf{On the algorithmic mappings $\mathcal A$ and $\mathcal C^\ITERR$.} Our analysis requires additional standard conditions on the mappings $\mathcal A$ and $\mathcal C^\ITERR$ to preserve linear convergence under quantization. Roughly speaking, 
the functions  $\mathcal A$ and $\mathcal C^\ITERR$ should vary smoothly with respect to perturbations in their arguments, so that small quantization errors result  in small deviations from the trajectory of the unquantized algorithm,
as postulated next.\footnote{For the sake of notation,  the constants $L_A,L_C$ and $L_{Z}$ defined in Assumptions \ref{assump:Lipt_A} and \ref{assump:Lipt_C} are assumed to be independent of the index $\ITERR$ (communication round). Our convergence results can be readily extended to constants dependent on $s$.}

\begin{assumption} \label{assump:Lipt_A}
There exists a constant $L_A \geq 0$  such that, for every   $s\in [\q]$,  it holds 
\begin{align}
\nonumber
& \big\Vert
\mathcal A({\bf z},{\bf c}^{1},\cdots, {\bf c}^{\ITERR},  \cdots, {\bf c}^{\q})
{-}\mathcal A({\bf z},{\bf c}^{1},\cdots, \tilde{\bf c}^{\ITERR}, \cdots, {\bf c}^{\q})
\big\Vert  \\&
 \qquad\leq   L_A \Vert {\bf c}^{\ITERR}-\tilde{\bf c}^{\ITERR}\Vert_2,
\label{eq:Lipt_A}
\end{align}
for all  $ {\bf c}^{\ITERR}, \tilde{\bf c}^{\ITERR}\in \mathbb{R}^{md}$, uniformly with respect to  ${\bf z} \in \mathcal Z$, and     ${\bf c}^{1},\dots,{\bf c}^{s-1},  {\bf c}^{s+1},\ldots {\bf c}^{R}\in \mathbb R^{md}$. \end{assumption}
\begin{assumption}\label{assump:Lipt_C}
There exist constants $L_C, L_{Z}\geq 0$  such that  
 \begin{align}
 &\Vert \mathcal C^\ITERR({\bf z}, {\bf c})-\mathcal C^\ITERR({\bf z}, {\bf c}^\prime) \Vert_2 \leq L_C \Vert {\bf c}-{\bf c}^\prime \Vert_2,  \forall  {\bf c},  {\bf c}^\prime \in \mathbb{R}^{m d}, \label{eq:Lipt_C}\\ 
 &\Vert \mathcal C^\ITERR({\bf z}, {\bf c})-\mathcal C^\ITERR({\bf z}^\prime, {\bf c}) \Vert_2 \leq L_{Z} \Vert  {\bf z}-{\bf z}^\prime \Vert_2, \forall \mathbf{z},  \mathbf{z}^\prime\in \mathcal{Z}, \label{eq:Lipt_Z}
 \end{align}
 uniformly with respect to ${\bf z}  \in \mathcal Z$ and  ${\bf c}\in \mathbb{R}^{md}$, respectively. 
\end{assumption}

These assumptions are quite mild,
and satisfied by a variety of existing distributed algorithms, as shown
in   Appendix \ref{app:examples}. We are now ready to  introduce our  main convergence result.

\begin{theorem} \label{thm:conv}
Let $\{{\bf z}^{k}\}$ be the sequence generated by Algorithm~\ref{alg} under Assumptions {\ref{assump:R_conv_z}, \ref{assump:Lipt_A}, and \ref{assump:Lipt_C}}, with $\mathcal Q^{k}$ satisfying the BC-rule \eqref{eq:BCR} with
 {bias $\QNSB=\QNSB^0 \cdot (\RTQ)^k$ and compression rate $\QNSC \in [0,\bar{\QNSC}(\RTQ))$}, for some $\RTQ \in (\RTUQ, 1)$ and $\QNSB^0 > 0$ , where $\bar{\QNSC}(\RTQ)$ is defined as 
\begin{align}
\bar{\QNSC}(\RTQ)
\triangleq 
\frac{\RTQ}{\q}\cdot
\frac{\RTQ-\RTUQ}{\RTQ-\RTUQ+2 L_A L_{Z}[\q\max\{1,(2L_C)^{\q-1}\}]^2}.\label{omega_bar}
\end{align}
Then,  
$$\sqrt{\mathbb E[\Vert {\bf z}^{k} - {\bf z}^\infty \Vert_2^2]} \leq {\LIAP}_0 \cdot (\RTQ)^k,\quad k=0,1,\ldots,$$ where ${\LIAP}_0$ is a positive constant, whose expression is given in  \eqref{eq:V0}, Appendix~\ref{pf:thm:conv}.
\end{theorem}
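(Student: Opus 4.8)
The plan is to compare each quantized iterate with the nominal map $\tilde{\mathcal A}$ applied to the same state, and to control the resulting perturbation by propagating the quantization errors through the $R$ communication rounds. First I would introduce the per-round quantization error $\mathbf{w}^{k,s} := \mathcal Q^k(\mathbf{c}^{k,s}-\hat{\mathbf{c}}^{k-1,s}) - (\mathbf{c}^{k,s}-\hat{\mathbf{c}}^{k-1,s})$ and observe that the differential decoding gives the clean identity $\hat{\mathbf{c}}^{k,s} = \mathbf{c}^{k,s} + \mathbf{w}^{k,s}$, so the BC-rule \eqref{eq:BCR} directly bounds the conditional second moment of $\mathbf{w}^{k,s}$ by $\sqrt{d}\,\eta + \omega\|\mathbf{c}^{k,s}-\hat{\mathbf{c}}^{k-1,s}\|_2$. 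Writing $\bar{\mathbf{c}}^{k,s}$ for the signals produced by the noiseless recursion $\bar{\mathbf{c}}^{k,s}=\mathcal C^s(\mathbf{z}^k,\bar{\mathbf{c}}^{k,s-1})$ started from the same $\mathbf{z}^k$, the triangle inequality splits $\|\mathbf{z}^{k+1}-\mathbf{z}^\infty\| \le \|\tilde{\mathcal A}(\mathbf{z}^k)-\mathbf{z}^\infty\| + \|\mathbf{z}^{k+1}-\tilde{\mathcal A}(\mathbf{z}^k)\|$; the first term is $\le \lambda\|\mathbf{z}^k-\mathbf{z}^\infty\|$ by the pseudo-contraction of Assumption~\ref{assump:R_conv_z}, so it remains to bound the perturbation term.

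For the perturbation, I would first establish the across-round recursion $\|\hat{\mathbf{c}}^{k,s}-\bar{\mathbf{c}}^{k,s}\|_2 \le L_C\|\hat{\mathbf{c}}^{k,s-1}-\bar{\mathbf{c}}^{k,s-1}\|_2 + \|\mathbf{w}^{k,s}\|_2$ using the $L_C$-Lipschitz property \eqref{eq:Lipt_C} together with $\hat{\mathbf{c}}^{k,0}=\bar{\mathbf{c}}^{k,0}=\mathbf{0}$; unrolling yields $\|\hat{\mathbf{c}}^{k,s}-\bar{\mathbf{c}}^{k,s}\|_2 \le \sum_{t=1}^s L_C^{s-t}\|\mathbf{w}^{k,t}\|_2$. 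Applying Assumption~\ref{assump:Lipt_A} one argument at a time then gives $\|\mathbf{z}^{k+1}-\tilde{\mathcal A}(\mathbf{z}^k)\| \le L_A\sum_{s=1}^R\sum_{t=1}^s L_C^{s-t}\|\mathbf{w}^{k,t}\|_2$, i.e.\ the one-step deviation is a geometric-in-round weighted sum of the quantization errors, with accumulation factor of order $R\max\{1,(2L_C)^{R-1}\}$.

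Next I would bound the driving quantity $\|\mathbf{c}^{k,s}-\hat{\mathbf{c}}^{k-1,s}\|_2$ inside the BC-rule. Using $\hat{\mathbf{c}}^{k-1,s}=\mathbf{c}^{k-1,s}+\mathbf{w}^{k-1,s}$ and the Lipschitz bounds \eqref{eq:Lipt_C}--\eqref{eq:Lipt_Z}, the prediction error obeys a second across-round recursion driven by $\|\mathbf{z}^k-\mathbf{z}^{k-1}\|_2$ and by the reconstruction gap $\hat{\mathbf{c}}^{k,s-1}-\hat{\mathbf{c}}^{k-1,s-1}$; bounding the latter by $(1+\omega)\le 2$ times the previous prediction error plus $\sqrt{d}\,\eta$ produces the per-round factor $2L_C$ that is the source of $\max\{1,(2L_C)^{R-1}\}$ in \eqref{omega_bar}. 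Since all of this must be done for the random quantizer, I would work throughout with the root-mean-square norm $\sqrt{\mathbb E[\|\cdot\|^2]}$ — itself a genuine norm, so Minkowski's inequality applies — and peel off each $\mathbf{w}^{k,s}$ via conditional expectation and the tower property, converting the pointwise BC-rule into a clean second-moment recursion.

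Finally I would close the estimate by strong induction on $k$, positing $V^j := \sqrt{\mathbb E[\|\mathbf{z}^j-\mathbf{z}^\infty\|^2]} \le V_0(\sigma)^j$ for all $j\le k$. Under this hypothesis $\|\mathbf{z}^k-\mathbf{z}^{k-1}\|$ and all errors generated at step $k-1$ are $\mathcal O((\sigma)^{k-1})$, and the time-decaying bias $\eta=\eta^0(\sigma)^k$ supplies a matching $(\sigma)^k$ term; assembling the deviation bound into $V^{k+1}\le \lambda V^k + (\text{coupling})\cdot V^k + (\text{bias})$ shows that the effective one-step factor is $\lambda$ plus a term proportional to $\omega L_A L_Z[R\max\{1,(2L_C)^{R-1}\}]^2/\sigma$, which the threshold $\omega<\bar\omega(\sigma)$ in \eqref{omega_bar} is designed to keep below $\sigma$, thereby forcing $V^{k+1}\le V_0(\sigma)^{k+1}$ for a suitable $V_0$. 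The main obstacle is precisely the bookkeeping of the two intertwined recursions — one across the $R$ rounds within an iteration, one across consecutive iterations through the prediction error $\mathbf{c}^{k,s}-\hat{\mathbf{c}}^{k-1,s}$ — and verifying that the constants aggregated from Assumptions~\ref{assump:Lipt_A}--\ref{assump:Lipt_C} combine into exactly the quadratic factor appearing in $\bar\omega(\sigma)$, so that the induction actually closes at the target rate $\sigma$.
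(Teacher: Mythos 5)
Your proposal follows essentially the same route as the paper's proof: the differential-decoding identity $\hat{\mathbf c}^{k,s}=\mathbf c^{k,s}+\mathbf w^{k,s}$, the triangle-inequality split against $\tilde{\mathcal A}(\mathbf z^k)$ with the pseudo-contraction handling the first term, the round-unrolled Lipschitz bound yielding the aggregate constant $L_A\sum_{s}L_C^{s}$, a second cross-iteration recursion for the quantizer input $\mathbf c^{k,s}-\hat{\mathbf c}^{k-1,s}$ with per-round factor $2L_C$, root-mean-square norms with Minkowski and the tower property for the random quantizer, and a coupled induction over $k$ and the $R$ rounds closed by the choice of $\bar\QNSC(\RTQ)$. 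This is precisely the structure of the paper's argument (Lemmas~\ref{lemma:V_F}--\ref{lemma:rvs} and the double induction on \eqref{error_induction}--\eqref{qin_dist_induction}), so the plan is sound and no substantive gap remains beyond the constant bookkeeping you already flag.
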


\begin{IEEEproof}
See Appendix \ref{pf:thm:conv}.
\end{IEEEproof}
Note that, when the deterministic instance of the BC-rule is used [see (\ref{eq:BCR_determin})], the convergence rate reads $\Vert {\bf z}^{k} - {\bf z}^\infty \Vert_2 \leq {\LIAP}_0 \cdot (\RTQ)^k,$ for all  $k=0,1,\ldots$ .

Theorem~\ref{thm:conv} shows that linear convergence is achievable when communications are quantized in distributed optimization, provided that  the   {bias}  {$\QNSB$} and  {compression rate} $\omega$ of the  BC-rule are chosen suitably.  The shrinking  requirement on the bias {$\QNSB$} (linear at rate $\RTQ$)
 is \emph{not} restrictive, since it is consistent with the contraction dynamics of the iterates: the range of inputs to the quantizer vanishes in a similar fashion, a fact that guarantees that quantized values can be encoded using a uniformly bounded number of bits (see Theorem~\ref{thm:comm_cost_linear}, Sec.~\ref{sec:commcost}).
Similarly, the bound on $\omega$ guarantees that quantization errors along the iterates do not accumulate excessively. 

  Theorem~\ref{thm:conv} certifies linear convergence in terms of number of iterations.
  However, the algorithm that uses quantized communications converges slower (with rate $\RTQ$) than its unquantized counterpart (rate $\RTUQ$),
  revealing a tension  between the  amount of  quantization/compression of the transmitted signals (measured by $\QNSB$ and  $\omega$)    and the resulting linear convergence rate $\RTQ$: as we will see in the forthcoming sections,
  this tension entails a trade-off between faster convergence (closer to that of the unquantized algorithm) and communication cost, which we aim to characterize.
  Building on this result,  we will also investigate the communication complexity of the schemes (\ref{Qupdated})--the total number of bits needed to reach an $\varepsilon$-solution of problem (\ref{eq:P}). Since this analysis depends on the specific quantizer design,  the next section introduces a novel quantizer  that efficiently implements  the  BC-rule, and a communication-efficient bit-encoding/decoding scheme.    When embedded in  (\ref{Qupdated}), the proposed quantization leads to linearly convergent   distributed algorithms whose communication complexity compares favorably with that of existing ad-hoc schemes (\secref{sec:commcost}).

\section{Non-Uniform Quantizer with Adaptive Encoding/Decoding}
\label{sec:pro_quant} 
As discussed in \secref{Sec:framework_Q}, the BC-rule encompasses a variety of quantizer designs. 
In this section, we propose a  quantizer   that fulfills the BC-rule with minimum number of quantization   points (\secref{sec:quant_design}). The quantizer is then coupled with a  {communication-}efficient  bit-encoding/decoding rule which enables transmission on the digital channel (\secref{sec:encoding}). We refer to the proposed quantizer coupled with the encoding/decoding scheme as  \emph{Adaptive encoding Non-uniform Quantizer}  (ANQ). 

\subsection{Quantizer design} \label{sec:quant_design}
Since  no information is assumed on the distribution of
the input signal, a  natural approach is to  quantize each vector signal component-wise.  
We design such a scalar quantizer $\mathcal Q: [-\RANGE, \RANGE]\to \mathbb Q$ under the BC-rule by minimizing the number of  {quantization points} $|\mathbb Q|$ for a fixed input range $[-\delta,\delta]$. Equivalently, we seek  
$\mathcal Q$  
 that maximizes $\RANGE$ under the BC-rule, for a given number   $N=|\mathbb Q|$ of  {quantization points}.  These designs are provided in Lemmas~\ref{lemma:quant} and~\ref{lemma:quant_prob}
for  the deterministic and probabilistic cases,
respectively. For convenience, we focus on the case of  $N$ odd; the case  of  $N$ even is provided in    Appendices \ref{pf:lemma:quant} and \ref{pf:lemma_quant_prob}.
We point out that the restriction of the input of $\mathcal{Q}$ to $[-\delta,\delta]$,   as opposed to the unconstrained domain  in the   BC-rule  (Definition~\ref{def:quant}), is instrumental in formulating  the quantizer's design as  an optimization problem. As  shown in Lemmas~\ref{lemma:quant} and~\ref{lemma:quant_prob}, the resulting optimized quantization points are independent of the specific choice of $\delta$; hence, the proposed    quantizer can be used (component-wise) for input signals in $\mathbb{R}^d$.
\begin{lemma}[Deterministic Quantizer] \label{lemma:quant}
Let $\mathcal Q:[-\RANGE, \RANGE]\to\mathbb Q$.
The maximum range $\delta$ that can be quantized using  $|\mathbb Q|=N$ (odd) points under the BC-rule \eqref{eq:BCR_determin} with bias $\QNSB\geq 0$ and compression rate $\QNSC \in [0,1)$ is
\begin{align}
{\delta(\QNSB,\QNSC,N)}=\frac{q_{(N-1)/2}+q_{(N+1)/2}}{2}, \label{eq:delta_det}
\end{align}
with quantization points 
\begin{align}
\label{qell}
&q_{\ell}=-q_{-\ell}=
\dfrac{\QNSB}{\QNSC}\left[
\Big(\frac{1+\QNSC}{1-\QNSC}\Big)^\ell-1\right],
\quad  \ell\geq 0.
\end{align}
The resulting optimal quantization rule reads: $x\mapsto \mathcal{Q}(x)=q_{\ell(x)}$, with 
\begin{equation} \label{eq:lstar}\ell(x)
=
\mathrm{sign}(x) \cdot
\bigg\lceil
\frac{\ln(1-\QNSC)+\ln(1+\frac{\QNSC}{\QNSB}|x|)
}{\ln(1+\QNSC)-\ln(1-\QNSC)}\bigg\rceil.
\end{equation} 
\end{lemma}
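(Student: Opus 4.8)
The plan is to exploit that $\mathcal Q$ acts component-wise, so it suffices to design a scalar map obeying the scalar BC-rule $|\mathcal Q(x)-x|\le \QNSB+\QNSC|x|$, and to recast the design as a one-dimensional interval-covering problem. First I would use the symmetry of the tolerance $\QNSB+\QNSC|x|$ about $x=0$: for $N$ odd, the natural symmetric configuration places one point at the origin, $q_0=0$, and the remaining points in $M\triangleq(N-1)/2$ reflected pairs $q_{-\ell}=-q_\ell$. A standard exchange/symmetrization argument shows that restricting to symmetric configurations is without loss of optimality, so the problem reduces to covering $[0,\RANGE]$ with nonnegative points $0=q_0<q_1<\cdots<q_M$ while maximizing $\RANGE$.

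The central object is the coverage set of a point $q\ge0$, i.e.\ the inputs $x\ge0$ it may represent under the rule: $C(q)=\{x\ge0:|q-x|\le \QNSB+\QNSC x\}=[a(q),b(q)]$, with $a(q)\triangleq(q-\QNSB)/(1+\QNSC)$ and $b(q)\triangleq(q+\QNSB)/(1-\QNSC)$. Covering $[0,\RANGE]$ forces the \emph{no-gap} conditions $b(q_\ell)\ge a(q_{\ell+1})$ for every $\ell$ (a point strictly between two consecutive $q$'s that is covered by neither of its two nearest representatives is covered by none), together with $b(q_M)\ge \RANGE$. Since $a$ and $b$ are increasing, to maximize the reach $\RANGE=b(q_M)$ I would push each point as far right as admissible: given $q_\ell$, the largest feasible $q_{\ell+1}$ saturates the no-gap constraint, $b(q_\ell)=a(q_{\ell+1})$. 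A short induction using monotonicity of $a^{-1}\circ b$ then shows this tightly-packed, greedy choice dominates any feasible configuration term-by-term, which yields the optimality (upper-bound) half of the claim.

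Imposing $b(q_\ell)=a(q_{\ell+1})$ gives the affine recurrence $q_{\ell+1}=\tfrac{1+\QNSC}{1-\QNSC}\,q_\ell+\tfrac{2\QNSB}{1-\QNSC}$ with $q_0=0$. Solving this first-order linear recurrence (geometric ratio $\beta\triangleq\tfrac{1+\QNSC}{1-\QNSC}$, the constant absorbed into $\QNSB/\QNSC$) produces the closed form $q_\ell=\tfrac{\QNSB}{\QNSC}\big[\beta^\ell-1\big]$, which is \eqref{qell}, and the reflected points follow from $q_{-\ell}=-q_\ell$. The maximal range is $\RANGE=b(q_M)=(q_M+\QNSB)/(1-\QNSC)$; feeding the recurrence in once more, a one-line computation rewrites this as the midpoint $\tfrac12(q_M+q_{M+1})$ between the last point and the virtual next one, recovering \eqref{eq:delta_det}. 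Feasibility of the construction (the lower-bound half) follows because the intervals $C(q_\ell)$ tile $[0,\RANGE]$ exactly by design.

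Finally, the assignment rule comes from the crossover structure: for $x\ge0$, $\mathcal Q(x)=q_\ell$ exactly when $x\in[\tfrac12(q_{\ell-1}+q_\ell),\tfrac12(q_\ell+q_{\ell+1})]$, and the identities above identify these midpoints with $a(q_\ell)$ and $b(q_\ell)$. Solving $x\le\tfrac12(q_\ell+q_{\ell+1})=(q_\ell+\QNSB)/(1-\QNSC)$ for the smallest admissible integer $\ell$ and substituting the closed form gives $\beta^\ell\ge(1-\QNSC)\big(1+\tfrac{\QNSC}{\QNSB}|x|\big)$; taking logarithms, using $\ln\beta=\ln(1+\QNSC)-\ln(1-\QNSC)$, and restoring the sign of $x$ yields the ceiling expression \eqref{eq:lstar}. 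I expect the main obstacle to be the optimality half—rigorously establishing that the no-gap conditions are necessary and that greedy tight packing maximizes the reach, plus justifying the reduction to symmetric configurations; the recurrence, its closed-form solution, and the assignment rule are then routine algebra.
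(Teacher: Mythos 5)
Your proposal is correct and follows essentially the same route as the paper's proof: both reduce to a scalar symmetric quantizer on $[0,\delta]$, saturate the constraint linking consecutive points to obtain the affine recurrence $q_{\ell+1}=\tfrac{1+\QNSC}{1-\QNSC}q_\ell+\tfrac{2\QNSB}{1-\QNSC}$ with $q_0=0$, and solve it to get \eqref{qell}, \eqref{eq:delta_det}, and \eqref{eq:lstar}. Your coverage-interval/greedy-packing framing is just a dual presentation of the paper's formulation, which writes the BC constraint over each nearest-point cell $\mathcal X_\ell$ and uses convexity of the quadratic to reduce it to the same boundary (midpoint) conditions; if anything, your term-by-term domination induction makes the optimality step slightly more explicit than the paper's "solving this problem with respect to $\tilde{\mathbf q}$."
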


\begin{IEEEproof}
See Appendix \ref{pf:lemma:quant}.
\end{IEEEproof}  

\begin{lemma}[Probabilistic Quantizer] \label{lemma:quant_prob}
For any given   ${x}\in [-\delta,\delta]$, let $\mathcal Q({x})\in \mathbb{Q}$ 
be a random variable  defined on a suitable probability space.   
The maximum range $\delta$ that can be quantized using  $|\mathbb Q|=N$ (odd) points under the BC-rule \eqref{eq:BCR} with $\mathbb E[\mathcal Q({x})] = {x}$, bias $\QNSB\geq 0$ and compression rate $\QNSC \in [0,1)$ is
\begin{equation}
\delta(\QNSB, \QNSC, N) = 
q_{(N-1)/2},\label{eq:delta_prob}
\end{equation}
with quantization points
\begin{align}
\label{qellprob}
q_{\ell}=-q_{-\ell}
=
\frac{\eta}{\omega}
\Big[\Big(\sqrt{1+(\omega)^2}+\omega\Big)^{2\ell}-1\Big]
,\quad  \ell\geq 0.
\end{align}
 The resulting optimal quantization rule reads: $x\mapsto \mathcal{Q}(x)=q_{\ell(x)}$, 
 with   
\begin{align}
&\ell(x) 
=
\begin{cases}
\ell-1, & \textrm{ w.p. }\, \frac{q_{\ell} - x}{q_{\ell} - q_{\ell-1}}; \\
\ell, & \text{ w.p. } \,\frac{x - q_{\ell-1}}{q_{\ell} - q_{\ell-1}},
\end{cases} \nonumber \quad \text{and} \\&     \ell=\mathrm{sign}(x)
\bigg\lceil\frac{\ln(1+\frac{\omega}{\eta}|x|)}
    {2\ln\Big(\sqrt{1+(\omega)^2}+\omega\Big)}\bigg\rceil
.
\label{eq:lstar_prob}
\end{align}
\end{lemma}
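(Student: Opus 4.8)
The plan is to mirror the deterministic argument of Lemma~\ref{lemma:quant} but with the quantization error now measured through its \emph{variance}, since unbiasedness $\mathbb E[\mathcal Q(x)]=x$ is imposed. First I would reduce to the scalar case ($d=1$ per component): a component-wise quantizer with scalar parameters $\eta,\omega$ satisfies the vector BC-rule \eqref{eq:BCR} because $\sum_i(\eta+\omega|x_i|)^2=d\eta^2+2\eta\omega\|x\|_1+\omega^2\|x\|_2^2\le(\sqrt d\,\eta+\omega\|x\|_2)^2$ by Cauchy--Schwarz. I would then fix the rule to be the randomized rounding of $x\in[q_{\ell-1},q_\ell]$ onto the two bracketing points, with the weights in \eqref{eq:lstar_prob}, so that $\mathbb E[\mathcal Q(x)]=q_{\ell-1}\frac{q_\ell-x}{q_\ell-q_{\ell-1}}+q_\ell\frac{x-q_{\ell-1}}{q_\ell-q_{\ell-1}}=x$. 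A short convexity (LP) argument shows that among all grid-supported unbiased rules, concentrating the mass on the two adjacent points minimizes the variance, so restricting to this rule is without loss for the range-maximization. A direct computation then gives $\mathbb E[(\mathcal Q(x)-x)^2]=(x-q_{\ell-1})(q_\ell-x)$ on each interval.

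With this, the scalar BC-rule becomes the per-interval requirement $(x-q_{\ell-1})(q_\ell-x)\le(\eta+\omega|x|)^2$ for all $x\in[q_{\ell-1},q_\ell]$. I would rewrite this as $g(x)\ge 0$, where $g(x)=(1+\omega^2)x^2+(2\eta\omega-q_{\ell-1}-q_\ell)x+(\eta^2+q_{\ell-1}q_\ell)$ is an upward parabola whose error part vanishes at the endpoints; since $g$ is then nonnegative at both endpoints, the binding case occurs when its vertex lies inside $[q_{\ell-1},q_\ell]$ and touches zero, i.e.\ when the discriminant satisfies $\Delta=0$. Maximizing the range for $N$ points thus reduces to the greedy recursion: start from $q_0=0$ and, given $q_{\ell-1}$, take the largest $q_\ell$ making $\Delta=0$.

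The heart of the proof is solving this recursion, and the clean step I would use is the shift $u_\ell\triangleq q_\ell+\eta/\omega$. Substituting into $\Delta=0$ and expanding, the terms $4\eta^2K^2$ and $4\eta K(u_{\ell-1}+u_\ell)$ (with $K=(1+\omega^2)/\omega$) cancel, collapsing the condition to the strikingly simple $(u_{\ell-1}+u_\ell)^2=4(1+\omega^2)\,u_{\ell-1}u_\ell$. Dividing by $u_{\ell-1}^2$ and setting $t=u_\ell/u_{\ell-1}$ yields $t^2-2(2\omega^2+1)t+1=0$, whose larger root is exactly $\rho^2$ with $\rho=\sqrt{1+\omega^2}+\omega$ (the smaller being $1/\rho^2$). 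Choosing the larger root maximizes each step, so $u_\ell=\rho^{2\ell}u_0=\frac{\eta}{\omega}\rho^{2\ell}$, giving $q_\ell=\frac{\eta}{\omega}(\rho^{2\ell}-1)$ as in \eqref{qellprob}; the maximal range is $\delta=q_{(N-1)/2}$ as in \eqref{eq:delta_prob}, and inverting $x\le q_\ell$ produces the ceiling expression for $\ell(x)$ in \eqref{eq:lstar_prob}.

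The main obstacle I anticipate is not the algebra but justifying that the greedy, step-maximizing recursion is \emph{globally} optimal. For this I would prove monotonicity of the map $q_{\ell-1}\mapsto q_\ell^{\max}(q_{\ell-1})$---transparent once the recursion is written as $u_\ell=\rho^2 u_{\ell-1}$, since $q\mapsto q+\eta/\omega$ is increasing---and then argue by induction that any feasible symmetric configuration with $q_0'=0$ obeys $q_\ell'\le q_\ell$. A secondary check is to confirm that at the $\Delta=0$ configuration the vertex indeed falls inside $[q_{\ell-1},q_\ell]$ (for $\ell=1$ the touching point is $\eta/\sqrt{1+\omega^2}\in(0,q_1)$ with $q_1=2\eta\rho$), so that $\Delta=0$ is the genuinely active constraint rather than an endpoint condition. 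The even-$N$ case would be handled separately, as noted, via the same recursion with a shifted indexing of the points.
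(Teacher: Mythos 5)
Your proposal is correct and follows essentially the same route as the paper's proof: reduce to a scalar, component-wise quantizer, restrict to the two-point unbiased randomized rounding (whose per-interval error is $(x-q_{\ell-1})(q_\ell-x)$), impose the quadratic constraint $(x-q_{\ell-1})(q_\ell-x)\le(\eta+\omega|x|)^2$ on each cell, and extract the recursion $q_\ell=\rho^2 q_{\ell-1}+2\eta\rho$ with $\rho=\sqrt{1+\omega^2}+\omega$. Your substitution $u_\ell=q_\ell+\eta/\omega$, which linearizes the recursion to $u_\ell=\rho^2 u_{\ell-1}$, and your explicit LP argument for why two-point support is variance-optimal among unbiased grid rules, are clean additions, but they lead to the same quantization points and range as the paper's direct induction.
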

\begin{IEEEproof}
See Appendix \ref{pf:lemma_quant_prob}.
\end{IEEEproof}

From these lemmas, one infers that quantization points under the BC-rule should be  non-uniformly spaced--hence the  name ANQ.
Furthermore, the deterministic quantizer maps an input  $x$ to the nearest   $q_\ell$,
whereas the probabilistic one
maps $x$ to one of the two nearest  {quantization points},  selected randomly such that $\mathbb E[\mathcal Q(x)] = x$.

Note that, when specialized to the conventional compression rule that uses
$\QNSB=0$, the two lemmas above yield $\delta(0, \QNSC, N)=0$  for any finite $N$, implying that infinite quantization points (hence number of bits) are required to encode signals. The next corollary formalizes this  negative result.

\begin{corollary}[Converse] \label{coro:converse}
The (unbiased, $\QNSB=0$) compression rule \eqref{prob_compression_rule}
cannot be satisfied
using a finite number of  {quantization points}  to quantize signals
within a given  range $[-\RANGE, \RANGE]^d$, for any finite $\delta>0$.
Therefore, the   compression rules in \cite{Alistarh2017, Alistarh2018, Karimireddy2018, Stich2018, Tang2018, Koloskova2019, Zhang2019, Zheng2019, Beznosikov2020, Kovalev2020, Taheri2020, Liao2021, Haddadpour2021} 
{theoretically require an infinite number of bits to encode quantized signals.}
\end{corollary}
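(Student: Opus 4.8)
The plan is to argue by contradiction: I will assume that some \emph{finite} codebook $\mathbb Q$ satisfies the compression rule \eqref{prob_compression_rule} (i.e.\ the BC-rule with $\QNSB=0$) for every $\mathbf{x}\in[-\RANGE,\RANGE]^d$, and derive that $\mathbb Q$ must then contain quantization points of arbitrarily small positive norm---impossible for a finite set. A single argument will cover both the deterministic and the probabilistic quantizers.

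The first step is to collapse both cases into one deterministic inequality. For any $\mathbf{x}$ and any (possibly random) $\mathcal Q$ supported on $\mathbb Q$, bounding each realization below by the nearest codeword gives
\begin{equation*}
\mathbb E\big[\Vert \mathcal Q(\mathbf{x})-\mathbf{x}\Vert_2^2\big]
=\sum_{\mathbf{q}\in\mathbb Q}\Pr\{\mathcal Q(\mathbf{x})=\mathbf{q}\}\,\Vert \mathbf{q}-\mathbf{x}\Vert_2^2
\ge \min_{\mathbf{q}\in\mathbb Q}\Vert \mathbf{q}-\mathbf{x}\Vert_2^2,
\end{equation*}
so that $\sqrt{\mathbb E[\Vert \mathcal Q(\mathbf{x})-\mathbf{x}\Vert_2^2]}\ge \operatorname{dist}(\mathbf{x},\mathbb Q)$, with equality in the deterministic case. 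Substituting into \eqref{prob_compression_rule} with $\QNSB=0$ then yields the clean requirement $\operatorname{dist}(\mathbf{x},\mathbb Q)\le \QNSC\Vert \mathbf{x}\Vert_2$ for all $\mathbf{x}\in[-\RANGE,\RANGE]^d$.

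Next I would probe this requirement near the origin along a coordinate axis, taking $\mathbf{x}=t\,\mathbf{e}_1$ with $t\in(0,\RANGE]$. If $\mathbb Q=\{\mathbf{0}\}$ the requirement reads $t\le \QNSC\,t$, impossible since $\QNSC<1$; otherwise $q_0\triangleq\min\{\Vert\mathbf{q}\Vert_2:\mathbf{q}\in\mathbb Q,\ \mathbf{q}\neq\mathbf{0}\}>0$ is well defined. The requirement furnishes a codeword $\mathbf{q}$ with $\Vert \mathbf{q}-t\,\mathbf{e}_1\Vert_2\le \QNSC\,t$, and since $\QNSC<1$ the triangle inequality forces $\mathbf{q}\neq\mathbf{0}$ with $\Vert\mathbf{q}\Vert_2\le(1+\QNSC)\,t$. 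Choosing $t$ so small that $(1+\QNSC)\,t<q_0$ then leaves no admissible nonzero codeword---the desired contradiction. This shows no finite $\mathbb Q$ works for any $\RANGE>0$. The concluding sentence of the corollary is then immediate: encoding a codebook of size $N$ costs at least $\log_2 N$ bits, so $N=\infty$ means the cited rules require an infinite number of bits.

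I expect the only real subtlety to be phrasing a \emph{single} argument that simultaneously handles the random and deterministic quantizers; the nearest-codeword lower bound above dispatches this cleanly. The geometric heart---and what makes the choice $\QNSB=0$ fatal---is that $\QNSC<1$ makes the tolerated error $\QNSC\Vert\mathbf{x}\Vert_2$ shrink strictly faster than $\Vert\mathbf{x}\Vert_2$ as $\mathbf{x}\to\mathbf{0}$, forcing quantization points to accumulate at the origin. As a cross-check, the same conclusion drops out of Lemmas~\ref{lemma:quant} and~\ref{lemma:quant_prob}: setting $\QNSB=0$ collapses every $q_\ell$ to $0$, so the maximal quantizable range is $\RANGE(0,\QNSC,N)=0$ for all finite $N$.
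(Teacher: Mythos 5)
Your proof is correct, and its first half coincides exactly with the paper's: Appendix~\ref{app:pf_converse} also begins by lower-bounding $\sqrt{\mathbb E[\Vert\mathcal Q(\mathbf x)-\mathbf x\Vert_2^2]}$ by the distance to the nearest codeword, thereby collapsing the probabilistic case onto a deterministic nearest-point quantizer, and it also restricts attention to inputs along a coordinate axis. Where you diverge is in how the contradiction is extracted. The paper projects the nearest-point quantizer onto a single coordinate to obtain a scalar quantizer with finitely many points satisfying the BC-rule with $\QNSB=0$, and then simply invokes Lemma~\ref{lemma:quant}, whose optimal construction gives maximal range $\delta(0,\QNSC,N)=0$ for every finite $N$. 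You instead give a self-contained geometric argument: from $\operatorname{dist}(t\mathbf e_1,\mathbb Q)\le\QNSC t$ with $\QNSC<1$ you deduce that every small $t>0$ forces a \emph{nonzero} codeword of norm at most $(1+\QNSC)t$, so codewords would have to accumulate at the origin, which a finite set cannot do; you also handle the degenerate codebook $\{\mathbf 0\}$ explicitly. The paper's route is shorter given that Lemma~\ref{lemma:quant} is already established (and reuses the optimality machinery), while yours is more elementary and does not depend on the correctness of the optimal quantizer design. Both arguments, like the paper's, tacitly use $\QNSC<1$ (for $\QNSC\ge 1$ the trivial codebook $\{\mathbf 0\}$ satisfies \eqref{prob_compression_rule}), which is the regime relevant to all the cited compression schemes; you at least make this dependence explicit.
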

\begin{proof}
See Appendix \ref{app:pf_converse}.
\end{proof}

Note that  the index $\ell(x)$ is a sufficient information to reconstruct the quantization point $q_{\ell(x)}$ at the receiver.
In the next section, we present a communication-efficient finite bit-encoding/decoding scheme to transmit  $\ell(x)$ over the digital channel. 

\subsection{Adaptive encoding scheme} \label{sec:encoding}
 It remains to design an   encoding/decoding  scheme mapping  the index $\ell(x)$ into a finite-bit  representation, to be transmitted over the digital channel. To do so, we adopt an adaptive number of bits, based upon the value of  $\ell(x)$, as detailed next.  

 We assume that a constellation $\mathbb S=[S]\cup\{0\}$
 of $S+1$
symbols is used, with $S\geq 2$ (this might be
obtained as $\mathbb S\equiv (\tilde{\mathbb S})^{\mathrm{w}}$,
by concatenating sequences of $\mathrm{w}$  symbols from a smaller constellation $\tilde{\mathbb S}$).
We use the symbol $0$ to indicate the end
 of  an information sequence, and the remaining $S$ symbols in the set $[S]$ to  encode the value of $\ell(x)$.
With  $\tilde{\mathcal L}_{-1}\equiv\emptyset$, let 
$$\tilde{\mathcal L}_b\equiv\bigg\{
-\bigg\lceil\frac{S^{b+1}-1}{2(S-1)}\bigg\rceil
+1,\dots,
\bigg\lfloor\frac{S^{b+1}-1}{2(S-1)}\bigg\rfloor
\bigg\},\quad  b=0,1,\ldots,$$
and 
\begin{align}
\label{Lb}
{\mathcal L}_b=\tilde{\mathcal L}_b\setminus\tilde{\mathcal L}_{b-1},\quad   b=0,1,\ldots.
\end{align}
It is not difficult to see  that $\{{\mathcal L}_b\,:\,b= 0,1,\ldots \}$ creates a partition of $\mathbb Z$ and $|{\mathcal L}_b|=(S)^b$.
Therefore, a natural way to
encode $\ell(x)\in{\mathcal L}_b$ is to use a unique sequence of  $b$ symbols from $[S]$,
followed by $0$ to mark the end of the information  {sequence}.
$\ell(x)$ is then encoded as  
$[s_1,\dots,s_{b},0]\in(\mathbb S)^{b+1}$.

Upon receiving this sequence, the receiver can detect the start and end of the information symbols, and decode the associated $\ell(x)$ by inverting the symbol-mapping. The communication cost to transmit the index $\ell(x)\in\mathcal L_b$ is thus $b+1$ (symbols),
which leads to the following upper bound on the communication cost incurred by each agent $i$ to quantize and encode a $d$-dimensional vector $\bf x$. Again, we focus on the case when $N$ is odd; the other case is provided in the proof in Appendix \ref{app:pf_C_ell_ub}.
\begin{lemma} \label{lemma:C_ell_ub}
The number of bits $ C({\bf x})$ required   by the ANQ with
 bias  $\QNSB\geq 0$ and compression rate $\QNSC\geq 0$  and   constellation of $S+1$ symbols to quantize and encode an 
input signal
${\bf x}\in\mathbb R^d$
is upper bounded by

 {\bf (i) Deterministic quantizer:}
\begin{align}
\label{eq:C_ell_ub}
& C({\bf x})\leq 
 3d\log_2(S+1)\\&
{+}d\log_2(S{+}1)\log_S\bigg(2{+}\frac{\ln(1-\QNSC){+}\ln\big(1{+}\frac{\QNSC\Vert \mathbf x\Vert_2}{\sqrt{d}\QNSB}\big)}{\ln(1+\QNSC) - \ln(1-\QNSC)}\bigg) \quad \mathrm{bits};   \nonumber
\end{align}

 {\bf (ii) Probabilistic quantizer with  {$\mathbb E[\mathcal Q(\bf x)] =\bf  x$:}}
 \begin{align}
 \label{eq:C_ell_ub_prob}
& C({\bf x})
\leq
3d\log_2(S+1)\\
&{+}d\log_2(S{+}1)\log_S\bigg(2{+}\frac{\ln\Big(1{+}\frac{\omega\Vert\mathbf x\Vert_2}{\sqrt{d}\eta}\Big)}{2\ln\Big(\sqrt{1{+}(\omega)^2}{+}\omega\Big)}\bigg)
\quad \mathrm{bits}, a.s..     \nonumber
\end{align}
\end{lemma}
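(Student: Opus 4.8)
The plan is to reduce the $d$-dimensional bit count to a sum of scalar contributions and bound each one through the explicit index formulas of Lemmas~\ref{lemma:quant} and~\ref{lemma:quant_prob} combined with the partition~\eqref{Lb}. Since the quantizer acts coordinate-wise and each symbol of the $(S+1)$-ary constellation carries $\log_2(S+1)$ bits, encoding the index $\ell(x_i)$ of coordinate $i$ --- which lands in some bin $\mathcal L_{b_i}$ --- costs $b_i+1$ symbols ($b_i$ information symbols plus the terminating $0$). Hence $C(\mathbf x)=\log_2(S+1)\sum_{i=1}^d (b_i+1)$, and it suffices to bound each $b_i+1$ by a scalar function of $|x_i|$.

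First I would relate the bin index to the magnitude of $\ell$. Because $\mathcal L_{b_i}=\tilde{\mathcal L}_{b_i}\setminus\tilde{\mathcal L}_{b_i-1}$, the index $\ell(x_i)$ lies outside $\tilde{\mathcal L}_{b_i-1}$, which forces $|\ell(x_i)|\ge (S^{b_i}-1)/(2(S-1))$ and hence $b_i\le \log_S\!\big(2(S-1)|\ell(x_i)|+1\big)$. I would then discard the ceiling in~\eqref{eq:lstar} (resp.~\eqref{eq:lstar_prob}) via $\lceil t\rceil\le t+1$, obtaining $|\ell(x_i)|\le g(|x_i|)+1$, with $g(t)=\frac{\ln(1-\omega)+\ln(1+\frac{\omega}{\eta}t)}{\ln(1+\omega)-\ln(1-\omega)}$ in the deterministic case and $g(t)=\frac{\ln(1+\frac{\omega}{\eta}t)}{2\ln(\sqrt{1+\omega^2}+\omega)}$ in the probabilistic case. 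In the probabilistic case the randomized index differs from $\lceil\cdot\rceil$ by at most one, so this magnitude bound --- and therefore the entire cost bound --- holds surely, which is the origin of the ``a.s.'' qualifier. Substituting, using $S\ge 2$, and invoking the elementary inequality $2(S-1)(g+1)+1\le S^2(2+g)$ (which holds because $2(S-1)-S^2=-((S-1)^2+1)<0$) gives the clean per-coordinate estimate $b_i+1\le 3+\log_S\!\big(2+g(|x_i|)\big)$; the additive $3$ and the $2$ inside the logarithm absorb the terminating symbol, the ceiling slack, and the base-$S$ shift. The coordinates with $\ell(x_i)=0$ (so $b_i=0$) and the even-$N$ case satisfy the estimate trivially.

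The final step sums over $i$ and replaces the coordinate-wise $|x_i|$ by $\|\mathbf x\|_2/\sqrt d$. I would show that $\phi(t)\triangleq\log_S\!\big(2+g(t)\big)$ is concave and increasing on $[0,\infty)$: the inner map $2+g(t)$ is a strictly positive constant plus a positive multiple of $\ln(1+ct)$ with $c=\omega/\eta$, hence concave, increasing and positive (one checks $2+g(0)>0$ for every $\omega\in[0,1)$), and $\log_S$ is concave and increasing, so the composition is concave. Jensen's inequality then yields $\frac1d\sum_i\phi(|x_i|)\le\phi\!\big(\frac1d\sum_i|x_i|\big)$, and Cauchy--Schwarz ($\|\mathbf x\|_1\le\sqrt d\,\|\mathbf x\|_2$) together with monotonicity of $\phi$ gives $\sum_i\phi(|x_i|)\le d\,\phi\!\big(\|\mathbf x\|_2/\sqrt d\big)$. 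Multiplying by $\log_2(S+1)$ reproduces~\eqref{eq:C_ell_ub} and~\eqref{eq:C_ell_ub_prob} verbatim. I expect the concavity verification to be the main obstacle: one must confirm both that the composite $\phi$ is concave (so Jensen applies) and that its argument stays positive (so $\log_S$ is defined); once this is settled, the passage from the $\ell_1$-type sum to $\|\mathbf x\|_2/\sqrt d$ is immediate.
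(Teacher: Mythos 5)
Your proposal is correct and follows essentially the same route as the paper's own proof (Appendix~\ref{app:pf_C_ell_ub}): a coordinate-wise symbol count, a bound on the bin index $b$ via the partition $\{\mathcal L_b\}$ in \eqref{Lb}, removal of the ceiling in $\ell(x)$, and then Jensen plus Cauchy--Schwarz to pass from the per-coordinate magnitudes to $\Vert\mathbf x\Vert_2/\sqrt{d}$. The one justification worth tightening is the parenthetical for $2(S-1)(g+1)+1\le S^2(2+g)$: in the deterministic case $g(|x_i|)$ can lie in $(-1,0)$ for small $|x_i|$ (since $\ln(1-\QNSC)<0$), so the negativity of the coefficient of $g$ alone does not close the case $g<0$, although the inequality still holds there because $|g|<1$ and $(S-1)^2+1\le 2S^2-2S+1$.
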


\begin{proof}
See Appendix \ref{app:pf_C_ell_ub}.
\end{proof}
Compared with  existing deterministic quantizers  \cite{Magnusson2020, Kajiyama2020} that are special cases of the BC-rule (with  $\QNSC = 0$), the proposed  ANQ adapts the number of bits to the input signal--less bits for smaller input signals (mapped to smaller $\ell$)  and more bits for larger ones (mapped to larger $\ell$)--rather than   using a fixed number of bits  determined by the worst-case input signal \cite{Magnusson2020, Kajiyama2020}. This leads to more communication-efficient  schemes,   as will be certified by  Theorem~\ref{thm:comm_cost_mesh}.

\section{Communication Complexity of (\ref{Qupdated})\\ based on  the ANQ Rule} \label{sec:commcost}
We now study the communication complexity of the distributed schemes falling within the framework  (\ref{Qupdated})  and using the ANQ to quantize    communications. Our results complement Theorem~\ref{thm:conv} and are of two types: (i)  first, 
we characterize the trade-off between the number of bits/agent
required by the ANQ at each iteration and the linear convergence rate
 (Theorem \ref{thm:comm_cost_linear});  (ii) then, we  investigate the communication complexity, characterizing the total number of bits/agent transmitted to achieve an $\varepsilon$-solution of \eqref{eq:P} (Theorem~\ref{thm:comm_cost_mesh}). 
Since these results are applicable to
any distributed algorithm within the setting of (\ref{Qupdated}),
we finally customize (ii) to some specific instances (\secref{sec:examples_ANQ}).

 Throughout this section,  all the  results stated in terms of $\mathcal{O}$-notation are meant  asymptotically  when $\m, d \to \infty$. Also, the  following additional mild assumption is postulated, which is  satisfied by a variety of  existing algorithms, see Appendix~\ref{app:examples}.
\begin{assumption} \label{assump:commcost}
The constants $L_A, L_C,L_Z,\q$ and the initial conditions $\Vert{\mathcal C}^{\ITERR}({\bf z}^{0}, {\bf 0})\Vert_2$ and  $\Vert {\bf z}^0 - {\bf z}^\infty\Vert$ satisfy
\begin{align*}
   & L_A\cdot L_Z = \mathcal O(1),\  L_C = \mathcal O(1),\ \q = \mathcal O(1),\\&
   \Vert {\bf z}^0 - {\bf z}^\infty\Vert=\mathcal O(\sqrt{\m d})
   \\   &\text{and} \quad \Vert{\mathcal C}^{\ITERR}({\bf z}^{0}, {\bf 0})\Vert_2=\mathcal O(L_Z\sqrt{\m d}),\ \forall s\in [R].
\end{align*}
\end{assumption}

Our first result on the number of bits transmitted at each iteration to sustain linear convergence is summarized next. 

\begin{theorem} \label{thm:comm_cost_linear}
{Instate}  the setting of  Theorem \ref{thm:conv}, under the additional Assumption~\ref{assump:commcost}. Furthermore,  suppose that the deterministic ANQ (or probabilistic ANQ with $\mathbb E[\mathcal Q({\bf x})] = {\bf x}$) is used to quantize all the communications in  (\ref{Qupdated}), with 
$\QNSB^0 = \Theta(L_Z(\RTQ-\RTUQ))$
and
$\QNSC$ such that $1 -\QNSC/\bar\QNSC(\RTQ)=\Omega(1)$. 
Then,  linear convergence $\sqrt{\mathbb E[\Vert {\bf z}^{k} - {\bf z}^\infty \Vert_2^2]}=\mathcal O(\sqrt{md}\cdot (\RTQ)^k)$,
$k=0,1,\ldots$,  is achieved with an
 average number of bits/agent at every iteration $k$ of order
 \begin{equation}\label{eq:bits-per-it}
\mathcal O \left(d\cdot\log \Big(1+\frac{1}{\RTQ(\RTQ-\RTUQ)}\Big)\right).  
 \end{equation}  
\end{theorem}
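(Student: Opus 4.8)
The plan is to decouple the two assertions---linear convergence with prefactor $\mathcal O(\sqrt{\m d})$, and the per-iteration bit count---and to prove each by propagating the scalings of Assumption~\ref{assump:commcost} through, respectively, the explicit constant of Theorem~\ref{thm:conv} and the encoding bound of Lemma~\ref{lemma:C_ell_ub}. For the \emph{convergence} claim I would start from the conclusion of Theorem~\ref{thm:conv}, namely $\sqrt{\mathbb E[\Vert {\bf z}^{k}-{\bf z}^\infty\Vert_2^2]}\le \LIAP_0\,(\RTQ)^k$, and inspect the explicit form of $\LIAP_0$ in \eqref{eq:V0}. Substituting $\QNSB^0=\Theta(L_Z(\RTQ-\RTUQ))$, the initial-condition scalings $\Vert{\bf z}^0-{\bf z}^\infty\Vert=\mathcal O(\sqrt{\m d})$ and $\Vert\mathcal C^{\ITERR}({\bf z}^0,{\bf 0})\Vert_2=\mathcal O(L_Z\sqrt{\m d})$, together with $L_AL_Z,L_C,\q=\mathcal O(1)$, one checks that every additive term of $\LIAP_0$ carries exactly one factor $\sqrt{\m d}$ times bounded constants; the requirement $1-\QNSC/\bar{\QNSC}(\RTQ)=\Omega(1)$ keeps the $\QNSC$-dependent denominators of $\LIAP_0$ (entering through $\bar{\QNSC}(\RTQ)$ in \eqref{omega_bar}) bounded away from zero. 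This yields $\LIAP_0=\mathcal O(\sqrt{\m d})$ and hence the stated rate.

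The \emph{bit-count} claim rests on controlling the norm of the quantizer input, i.e., the prediction error ${\bf e}^{k,\ITERR}\triangleq{\bf c}^{k,\ITERR}-\hat{\bf c}^{k-1,\ITERR}$, before feeding it into Lemma~\ref{lemma:C_ell_ub}. Let ${\bf c}^{\infty,\ITERR}$ denote the limiting communication signal; writing ${\bf e}^{k,\ITERR}=({\bf c}^{k,\ITERR}-{\bf c}^{\infty,\ITERR})+({\bf c}^{\infty,\ITERR}-\hat{\bf c}^{k-1,\ITERR})$ and using the Lipschitz bounds \eqref{eq:Lipt_C}--\eqref{eq:Lipt_Z}, the linear decay of $\Vert{\bf z}^{k}-{\bf z}^\infty\Vert$, and the BC-rule \eqref{eq:BCR} to control the accumulated quantization error, a short induction over $\ITERR\in[\q]$ and over $k$ gives $\Vert{\bf e}^{k,\ITERR}\Vert_2=\mathcal O\big(L_Z\sqrt{\m d}\,(\RTQ)^{k-1}\big)$. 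The decisive point is that the estimate $\hat{\bf c}^{k-1,\ITERR}$ is anchored at iteration $k-1$, so the prediction error decays as $(\RTQ)^{k-1}$ rather than $(\RTQ)^{k}$; this one-step mismatch is exactly what later produces the extra factor $1/\RTQ$.

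I would then apply Lemma~\ref{lemma:C_ell_ub} per agent and average over the $\m$ agents, invoking concavity of $\log_S(\cdot)$ (Jensen) to replace the per-agent norms by the root-mean-square norm $\tfrac{1}{\sqrt{\m}}\Vert{\bf e}^{k,\ITERR}\Vert_2=\mathcal O\big(L_Z\sqrt{d}\,(\RTQ)^{k-1}\big)$, which is where the aggregate $\sqrt{\m d}$ collapses to the per-agent $\sqrt{d}$. Substituting $\QNSB=\QNSB^0(\RTQ)^k=\Theta\big(L_Z(\RTQ-\RTUQ)(\RTQ)^k\big)$ then makes the ratio fed into the logarithms of \eqref{eq:C_ell_ub} equal to $\Vert{\bf e}^{k,\ITERR}\Vert_2/(\sqrt{\m}\,\sqrt{d}\,\QNSB)=\Theta\big(1/(\RTQ(\RTQ-\RTUQ))\big)$, which is $k$-independent. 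It remains to show that the full argument of $\log_S(\cdot)$ is $\mathcal O\big(1+1/(\RTQ(\RTQ-\RTUQ))\big)$: here the numerator $\ln\big(1+\QNSC\Vert{\bf x}\Vert_2/(\sqrt{d}\,\QNSB)\big)$ and the denominator $\ln\frac{1+\QNSC}{1-\QNSC}\asymp\QNSC$ share a common factor $\QNSC$ that cancels, leaving $\mathcal O\big(1/(\RTQ(\RTQ-\RTUQ))\big)$ once $\QNSC<\bar{\QNSC}(\RTQ)=\Theta(\RTQ(\RTQ-\RTUQ))$ is used to keep all ratios bounded. Multiplying by $d\log_2(S+1)=\mathcal O(d)$ and summing over the $\q=\mathcal O(1)$ rounds delivers the claimed $\mathcal O\big(d\log(1+1/(\RTQ(\RTQ-\RTUQ)))\big)$ bits per agent per iteration; the probabilistic case is identical using \eqref{eq:C_ell_ub_prob}.

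\emph{The main obstacle} is the prediction-error estimate with the sharp $(\RTQ)^{k-1}$ scaling, together with the $\QNSC$-cancellation inside $\log_S(\cdot)$: a loose bound on $\Vert{\bf e}^{k,\ITERR}\Vert_2$, or a failure to exploit the shared factor $\QNSC$ between numerator and denominator, would inflate the logarithm's argument and destroy the advertised $1/\RTQ$ improvement over fixed-rate quantizers.
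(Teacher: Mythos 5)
Your proposal is correct and follows essentially the same route as the paper: the paper likewise reads $\LIAP_0=\mathcal O(\sqrt{\m d})$ off \eqref{eq:V0}, bounds the quantizer input by $\bar F^{\q}(\RTQ)^k$ with $\bar F^{\q}/(\sqrt{\m d}\QNSB^0)=\mathcal O(1+1/(\RTQ(\RTQ-\RTUQ)))$ (Lemma~\ref{lemma:num_bits}), and feeds this $k$-independent ratio into Lemma~\ref{lemma:C_ell_ub} via Jensen and the $\QNSC$-cancellation, transferring the $\mathcal O$-estimate through the logarithm (Lemma~\ref{lemma:log_bigO}). The only cosmetic difference is that you re-derive the prediction-error bound by a fresh induction (placing the $1/\RTQ$ in the exponent $(\RTQ)^{k-1}$), whereas the paper reuses the bound \eqref{qin_dist_induction} already established in the proof of Theorem~\ref{thm:conv}, where the $1/\RTQ$ sits inside the constants $a_1,a_2$.
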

\begin{IEEEproof}
See Appendix \ref{pf:thm:comm_cost_linear}.
\end{IEEEproof}
The following comments are in order. 

\textbf{(i)} As expected,
 {the faster the quantized algorithm (smaller $\RTQ$), the larger
the communication cost; in particular,}
when $\RTQ\to\RTUQ$, the number of bits required to sustain   linear convergence at rate $\sigma$ grows  indefinitely. In other words, an  infinite number of bits is required if a quantized distributed scheme (\ref{Qupdated}) wants to match  the convergence rate  of its unquantized counterpart. 

\textbf{(ii)} It is interesting to contrast the communication efficiency (bits transmitted per iteration) of the  proposed model (\ref{Qupdated})  equipped with the ANQ with that of existing schemes  {applicable to special instances of (\ref{eq:P}) or network topologies. Specifically, \cite{Kajiyama2020, Magnusson2020} study (\ref{eq:P}) with $r\not\equiv 0$ over mesh networks;   the scheme therein convergences linearly (under suitable tuning/assumptions) while using  $$\mathcal O \left(d\log \Big(1+\frac{\sqrt{\m d}}{\RTQ(\RTQ - \RTUQ)}\Big)\right)\quad \text{bits/agent/iteration.}$$    The algorithm in  \cite{Magnusson2020} is also applicable to  (\ref{eq:P}), with $r\not\equiv 0$, over star-networks; it uses    $$\mathcal O \left(d\log \Big(1+\frac{\sqrt{d}}{\RTQ(\RTQ - \RTUQ)}\Big)\right)\quad \text{bits/agent/iteration}. $$ } Both  are less favorable than \eqref{eq:bits-per-it}, due to the fact that the ANQ adapts the number of bits to the input signal rather than adopting a constant number of bits for any input signal. 

\textbf{(iii)} 
Theorem \ref{thm:comm_cost_linear} reveals a tension    between convergence rate  (the closer  $\RTQ$  to $\RTUQ$, the faster the algorithm) and  number of transmitted bits per iteration (the larger $\RTQ$, the smaller the cost). In the following, we provide a favorable choice of  $\sigma$ 
that  resolves this tension by characterizing the communication complexity, i.e., the total number of  bits transmitted per agent to achieve a target $\varepsilon$-accuracy.
\begin{theorem} \label{thm:comm_cost_mesh}
 {Instate}  the setting of  Theorem \ref{thm:comm_cost_linear}, with $\RTQ$ chosen so that
 $$\frac{(1-\RTUQ)^2}{(1-\RTQ)(\RTQ-\RTUQ)}=\mathcal O(1).$$
Then, the  {average} number of bits transmitted per agent to achieve    $(1/\m)\mathbb E[\Vert {\bf z}^{k} - {\bf z}^\infty\Vert_2^2] \leq \varepsilon$ scales as
\begin{equation}
\mathcal O \bigg(d\cdot\log\Big(1+\frac{1}{1-\RTUQ}\Big
)
\frac{1}{1-\RTUQ} 
\log(d/\varepsilon)
\bigg) \quad  {\mathrm{bits}}/{\mathrm{agent}}, \label{comm_cost_gen}
\end{equation}
 {achieved in
\begin{equation}
\mathcal O \bigg(
\frac{1}{1-\RTUQ}\cdot 
\log(d/\varepsilon)
\bigg) \quad  \mathrm{iterations}
\end{equation}
and with 
\begin{equation}
\mathcal O \bigg(d\cdot\log\Big(1+\frac{1}{1-\RTUQ}\Big)
\bigg) \quad  \mathrm{bits/agent/iteration}.
\end{equation}}
\end{theorem}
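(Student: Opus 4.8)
The plan is to combine the linear-rate guarantee already established in Theorem~\ref{thm:comm_cost_linear} with a count of the iterations needed to reach the target accuracy, and then multiply the (iteration-uniform) per-iteration bit budget by that iteration count. First I would translate the convergence bound of Theorem~\ref{thm:comm_cost_linear}, namely $\sqrt{\mathbb E[\Vert {\bf z}^{k} - {\bf z}^\infty \Vert_2^2]}=\mathcal O(\sqrt{\m d}\,(\RTQ)^k)$, into an iteration count: squaring and dividing by $\m$ gives $(1/\m)\mathbb E[\Vert {\bf z}^{k} - {\bf z}^\infty\Vert_2^2] = \mathcal O(d\,(\RTQ)^{2k})$, so the target $(1/\m)\mathbb E[\Vert {\bf z}^{k} - {\bf z}^\infty\Vert_2^2]\le\varepsilon$ is met as soon as $d\,(\RTQ)^{2k}=\mathcal O(\varepsilon)$, that is, after
\[
K=\mathcal O\!\left(\frac{\log(d/\varepsilon)}{\ln(1/\RTQ)}\right)
\]
iterations.

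The second step, which I expect to be the crux, is to convert $\ln(1/\RTQ)$ and the per-iteration cost into expressions in $\RTUQ$ alone using the tuning hypothesis. Writing $a\triangleq 1-\RTQ$ and $b\triangleq \RTQ-\RTUQ$, one has $a+b=1-\RTUQ$, so by AM--GM $ab\le (1-\RTUQ)^2/4$; meanwhile the hypothesis $\tfrac{(1-\RTUQ)^2}{(1-\RTQ)(\RTQ-\RTUQ)}=\mathcal O(1)$ forces $ab=\Omega((1-\RTUQ)^2)$, so together $ab=\Theta((1-\RTUQ)^2)$. Since each of $a,b$ is at most $1-\RTUQ$, this two-sided bound yields $1-\RTQ=\Theta(1-\RTUQ)$ and $\RTQ-\RTUQ=\Theta(1-\RTUQ)$, and in turn $\RTQ=\RTUQ+\Theta(1-\RTUQ)$ is bounded away from $0$. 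I would then justify the estimate $\ln(1/\RTQ)=\Theta(1-\RTQ)$ (the lower bound $-\ln\RTQ\ge 1-\RTQ$ holds always, and the upper bound $-\ln\RTQ\le \RTQ^{-1}(1-\RTQ)$ holds because $\RTQ$ is bounded away from $0$), whence $\ln(1/\RTQ)=\Theta(1-\RTUQ)$. This turns the iteration count into $K=\mathcal O\!\big(\frac{1}{1-\RTUQ}\log(d/\varepsilon)\big)$, the claimed iteration bound.

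Next I would simplify the per-iteration budget. Theorem~\ref{thm:comm_cost_linear} gives $\mathcal O\!\big(d\log(1+\frac{1}{\RTQ(\RTQ-\RTUQ)})\big)$ bits/agent/iteration; using $\RTQ=\Theta(1)$ and $\RTQ-\RTUQ=\Theta(1-\RTUQ)$ we get $\RTQ(\RTQ-\RTUQ)=\Theta(1-\RTUQ)$, so this reduces to $\mathcal O\!\big(d\log(1+\frac{1}{1-\RTUQ})\big)$, matching the claimed bits/agent/iteration.

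Finally I would assemble the total cost. Since the per-iteration bound of Theorem~\ref{thm:comm_cost_linear} is uniform in $k$ (it does not depend on $k$), linearity of expectation gives that the expected total number of bits per agent over $K$ iterations is the product
\[
\mathcal O\!\left(d\log\Big(1+\frac{1}{1-\RTUQ}\Big)\right)\cdot \mathcal O\!\left(\frac{\log(d/\varepsilon)}{1-\RTUQ}\right)=\mathcal O\!\left(d\log\Big(1+\frac{1}{1-\RTUQ}\Big)\frac{1}{1-\RTUQ}\log(d/\varepsilon)\right),
\]
which is exactly \eqref{comm_cost_gen}. Once the $\Theta$-relations of the second step are in place, the remaining manipulations are direct substitutions; the only delicate point is that all hidden constants must be seen to be independent of $\RTUQ,\RTQ,\varepsilon,\m,d$, so that the scaling is captured entirely by the explicit factors.
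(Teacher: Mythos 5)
Your proposal is correct and follows essentially the same route as the paper: invoke Theorem~\ref{thm:comm_cost_linear} for the per-iteration bit budget and the rate $\mathcal O(\sqrt{md}(\RTQ)^k)$, bound the iteration count via $-\ln\RTQ\geq 1-\RTQ$, use the tuning hypothesis to replace $\RTQ$-dependent quantities by $\RTUQ$-dependent ones, and multiply. The only (harmless) difference is the intermediate algebra: you derive the two-sided relations $1-\RTQ=\Theta(1-\RTUQ)$, $\RTQ-\RTUQ=\Theta(1-\RTUQ)$, and $\RTQ=\Omega(1)$ via AM--GM, whereas the paper reaches the same conclusions through the one-sided chain $1/\RTQ\leq\frac{(1-\RTUQ)^2}{(1-\RTQ)(\RTQ-\RTUQ)}$ and $\frac{1-\RTUQ}{1-\RTQ}=\mathcal O(1)$, together with its Lemma~\ref{lemma:log_bigO} to transfer the asymptotics inside the logarithm.
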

\begin{IEEEproof}
See Appendix \ref{pf:thm:comm_cost_mesh}.
\end{IEEEproof}
The following comments are in order.

\textbf{(i)} Intuitively,   Theorem~\ref{thm:comm_cost_mesh} provides a range of values of $\RTQ$   to balance the tension
between convergence rate and communication cost per iteration, resulting from too small or too large values of  $\RTQ$. The condition of the theorem can be satisfied, e.g., by  choosing $\RTQ=(1+\RTUQ)/2$. 

\textbf{(ii)} 
The term $d\cdot \log (1+ 1/(1-\RTUQ))$ in \eqref{comm_cost_gen} represents the number of bits/iteration/agent under the additional restriction on $\sigma$, as postulated  by Theorem~\ref{thm:comm_cost_mesh}: the faster the unquantized algorithm (i.e., the smaller $\RTUQ$), the less bits are required.

\textbf{(iii)} 
The second term $(1-\RTUQ)^{-1} \log(d/\varepsilon)$ represents the total number of iterations required to achieve $\varepsilon$ accuracy;
 {remarkably, these are the same (in a $\mathcal O$-sense) as the unquantized algorithm}, with   $\log(d)$ capturing the gap of the initial point from the  {fixed point}.  As expected,  the number of iterations increases as the   unquantized algorithm slows down ($\RTUQ$ increases),  the dimension $d$ increases, and/or  the target error $\varepsilon$ decreases. 

Nest, we  customize  Theorem \ref{thm:comm_cost_mesh} to some distributed algorithms  within (\ref{Qupdated}).

\subsection{Special cases of (\ref{Qupdated}) using the ANQ rule} \label{sec:examples_ANQ}

\textbf{1) GD over star-networks:} Our first case study is the GD algorithm solving (\ref{eq:P})  with $r\equiv 0$ over star networks. The unquantized scheme reads 
\begin{equation}
{\bf x}^{k+1} = {\bf x}^{k} - \frac{\STEP}{\m}\sum_{i=1}^\m \nabla f_i({\bf x}^{k}), \label{eq:par_GD}
\end{equation}
with  $\STEP \in (0, 2/L)$.
This is an instance of the algorithmic framework  \eqref{eq:M}; therefore, we can employ quantization   using   (\ref{Qupdated}). When the ANQ is employed,  a direct application of  Theorem \ref{thm:comm_cost_mesh} 
leads to the following communication complexity for the quantized GD ({we termed the algorithm ANQ-GD-star}).

\begin{corollary}[ANQ-GD-star, see Appendix \ref{AppD1}] \label{coro:comm_cost_star}
 {Given \eqref{eq:P} over a star-network, where   $r\equiv 0$ and  $F$ is $L$-smooth and $\mu$-strongly convex, thus with   condition number $\kappa=L/\mu$, and unique minimizer $\mathbf{x}^\star$, consider the ANQ-GD-star with stepsize  $\STEP = 2/(\mu+L)$ and tuning for the ANQ as in   Theorem \ref{thm:comm_cost_mesh}. Then,  the average number of bits/agent for   $\Vert {\bf x}^k - {\bf x}^*\Vert_2^2 \leq \varepsilon$     is of order \begin{equation}\label{eq:cor-GD-quantized}
{\mathcal O} \Big(d\cdot\log(1+\kappa)\kappa\log\big(d/\varepsilon\big) \Big),
\end{equation}
achieved in
\begin{equation}
{\mathcal O} \Big(\kappa\log\big(d/\varepsilon\big) \Big)\quad
\mathrm{ iterations}
\end{equation}
and with 
\begin{equation}
{\mathcal O} \Big(d\cdot\log(1+\kappa)\Big)\quad 
\mathrm{bits/agent/iteration}.
\end{equation}
} \end{corollary}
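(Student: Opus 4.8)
The plan is to recognize the quantized GD recursion \eqref{eq:par_GD} as a particular instance of the black-box model \eqref{Qupdated} and then invoke Theorem~\ref{thm:comm_cost_mesh} essentially verbatim, the only genuine work being the identification of the contraction rate $\RTUQ$ and the verification that the structural Assumptions~\ref{assump:R_conv_z}, \ref{assump:Lipt_A}, \ref{assump:Lipt_C}, and \ref{assump:commcost} hold with dimension-independent constants. First I would exhibit the casting (detailed in Appendix~\ref{AppD1}): the hub broadcasts its iterate $\mathbf{x}^k$, each worker returns $\nabla f_i(\mathbf{x}^k)$, and the hub averages, so that the induced fixed-point map on the $\mathbf{z}$-variable is, up to the consensus structure, the gradient map $\mathbf{x}\mapsto \mathbf{x}-\STEP\,\nabla F(\mathbf{x})$. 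The signals entering the quantizer (iterate and gradients) depend on their arguments through affine/Lipschitz operations, so $L_A$, $L_C$, $L_Z$, and the number of rounds $\q$ are all $\mathcal O(1)$, and the standard initializations give $\Vert \mathbf{z}^0-\mathbf{z}^\infty\Vert=\mathcal O(\sqrt{\m d})$ and $\Vert \mathcal C^\ITERR(\mathbf{z}^0,\mathbf 0)\Vert_2=\mathcal O(L_Z\sqrt{\m d})$; hence Assumption~\ref{assump:commcost} is met.

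Second, I would pin down $\RTUQ$. With the prescribed stepsize $\STEP=2/(\mu+L)$, the gradient map of the $L$-smooth, $\mu$-strongly convex $F$ is a contraction in $\ell_2$ with modulus $\max\{|1-\STEP\mu|,|1-\STEP L|\}=(L-\mu)/(L+\mu)=(\kappa-1)/(\kappa+1)$, which is exactly the entry of Table~\ref{table:lambda}; thus Assumption~\ref{assump:R_conv_z} holds with $\RTUQ=(\kappa-1)/(\kappa+1)$ and, crucially, $1-\RTUQ=2/(\kappa+1)=\Theta(1/\kappa)$, equivalently $1/(1-\RTUQ)=\Theta(\kappa)$.

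Third, with these facts I would apply Theorem~\ref{thm:comm_cost_mesh}, choosing $\RTQ=(1+\RTUQ)/2$ so that $(1-\RTQ)(\RTQ-\RTUQ)=(1-\RTUQ)^2/4$ and the theorem's admissibility condition $(1-\RTUQ)^2/[(1-\RTQ)(\RTQ-\RTUQ)]=\mathcal O(1)$ holds. Substituting $1/(1-\RTUQ)=\Theta(\kappa)$ into the three bounds of Theorem~\ref{thm:comm_cost_mesh} immediately turns $\log(1+1/(1-\RTUQ))$ into $\log(1+\kappa)$ and $1/(1-\RTUQ)$ into $\kappa$, yielding $\mathcal O(d\log(1+\kappa)\,\kappa\,\log(d/\varepsilon))$ bits/agent, $\mathcal O(\kappa\log(d/\varepsilon))$ iterations, and $\mathcal O(d\log(1+\kappa))$ bits/agent/iteration, as claimed.

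The last detail, and the step I expect to require the most care, is translating the optimality certificate of Theorem~\ref{thm:comm_cost_mesh}, stated as $(1/\m)\mathbb E[\Vert \mathbf z^k-\mathbf z^\infty\Vert_2^2]\le\varepsilon$, into the corollary's target $\Vert \mathbf x^k-\mathbf x^\star\Vert_2^2\le\varepsilon$. Here I would use that, in the star casting, the fixed point $\mathbf z^\infty$ corresponds to $\mathbf x^\star$ and the iterate $\mathbf x^k$ is (a component of) $\mathbf z^k$, so that $\Vert \mathbf x^k-\mathbf x^\star\Vert_2^2$ is controlled by $(1/\m)\Vert \mathbf z^k-\mathbf z^\infty\Vert_2^2$ up to a dimension-independent constant, which is harmless because it is absorbed by the $\log(d/\varepsilon)$ factor. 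For the deterministic ANQ the expectation is vacuous, and for the unbiased probabilistic ANQ the same $\ell_2$-bound is inherited directly, so the accuracy statement carries over in both cases.
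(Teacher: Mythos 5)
Your proposal is correct and follows essentially the same route as the paper: cast GD-over-star into the black-box model with $R=1$, verify Assumptions~\ref{assump:R_conv_z}--\ref{assump:commcost} (the paper obtains $L_A=\gamma$, $L_C=0$, $L_Z=L$, so strictly it is the product $L_A L_Z=\mathcal O(1)$ rather than $L_Z$ itself that is bounded, but this is exactly what Assumption~\ref{assump:commcost} requires), identify $\RTUQ=(\kappa-1)/(\kappa+1)$ so that $1/(1-\RTUQ)=\Theta(\kappa)$, and substitute into Theorem~\ref{thm:comm_cost_mesh}. Your final translation step is in fact exact here, since $\mathbf z=\mathbf 1_m\otimes\mathbf x$ gives $(1/\m)\Vert\mathbf z^k-\mathbf z^\infty\Vert_2^2=\Vert\mathbf x^k-\mathbf x^\star\Vert_2^2$.
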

 This behavior compares favorably with   $\mathcal O(d\kappa\log(d(1+\kappa))\log (d/\varepsilon))$ bits/agent obtained  in \cite{Magnusson2020}
using  a deterministic quantizer, with fixed number of bits among agents and iterations.
It also matches $\mathcal O(d\kappa\log(1+\kappa)\log (d/\varepsilon))$ bits/agent obtained  in \cite{Alistarh2021tight} using the same deterministic quantizer as in \cite{Magnusson2020}.
However, \cite{Alistarh2021tight} uses a central coordinator to optimize  the number of bits used by each agent at every iteration.
Note that the lower complexity bound provided in
\cite{Davies2021} for  $\kappa=1$
reads: $\Omega\big(d \log (d/\varepsilon)\big)$ bits/agent, confirming the tightness of our result.

\textbf{2) Distributed algorithms employing gradient correction:} Our second example deals with distributed algorithms solving \eqref{eq:P} (possibly, with $r\not \equiv 0$) over mesh networks.  With a slight abuse of notation, below we denote  by $\kappa$, $L$ and $\mu$ the condition number, the smoothness constant and the strong convexity constant  of each $f_i$, respectively. We consider the most popular schemes,  employing gradient correction in the optimization direction--see Appendix~\ref{app:examples} for a description of these algorithms. We denote by  $\rho_2 \triangleq \rho_2({\bf W})$     the second largest eigenvalue of the gossip matrix ${\bf W}$ used in these algorithms  (note that $\rho_2=0$ for star-networks or fully-connected graphs).   The communication complexity of these algorithms when using the  ANQ is summarized next--they follow readily from Theorem~\ref{thm:comm_cost_mesh} and the convergence results of the unquantized algorithms in \cite{Xu2020}.

\begin{corollary}[ANQ-(Prox-)EXTRA,   ANQ-(Prox-)NIDS, and ANQ-NIDS over
mesh networks, see Appendices \ref{subsec:ABC_EXTRA}, \ref{subsec:ABC_NIDS}, and \ref{subsec:NIDS}]\label{coro:comm_cost_EXTRA_et_al} 
 Consider Problem \eqref{eq:P} over mesh networks, and the ANQ-(Prox-)EXTRA and     ANQ-{(Prox-)}NIDS  algorithms with stepsize  $\STEP = 2/(L+\mu)$ and the ANQ-NIDS algorithm with stepsize  $\STEP = 1/L$, where the  ANQ is  tuned as in Theorem~\ref{thm:comm_cost_mesh}. 
Then, the average number of bits/agent for $\Vert {\bf x}^k - {\bf x}^*\Vert_2^2 \leq \varepsilon$ is of the order of
\begin{align*}
\mathcal O\left(d\max\Big\{\kappa, \frac{1}{1{-}{\rho_2}}\Big\}\log\Big(\max\Big\{1{+}\kappa, \frac{1}{1{-}{\rho_2}}\Big\}\Big)\log (d/\varepsilon)\right),
\end{align*}
achieved in
\begin{align*}
{\mathcal O} \Big(\max\Big\{\kappa, \frac{1}{1- {\rho_2}}\Big\}\log (d/\varepsilon)\Big)\quad 
\mathrm{ iterations}
\end{align*}
and with 
\begin{align*}
{\mathcal O} \Big(d\log\Big(\max\Big\{1+\kappa, \frac{1}{1- {\rho_2}}\Big\}\Big)\Big)\quad 
\mathrm{ bits/agent/iteration}.
\end{align*}
\end{corollary}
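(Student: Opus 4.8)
The plan is to obtain the corollary as a direct specialization of Theorem~\ref{thm:comm_cost_mesh}: once each scheme is recognized as an instance of the quantized framework (\ref{Qupdated}) with a known unquantized rate $\lambda$, the stated bit counts follow by substituting $\lambda$ into \eqref{comm_cost_gen} and simplifying with the big-$\mathcal O$ calculus. First I would verify that (Prox-)EXTRA, (Prox-)NIDS, and NIDS can be cast in the form \eqref{eq:M}, that their mappings $\mathcal A,\mathcal C^s$ satisfy Assumptions~\ref{assump:Lipt_A} and \ref{assump:Lipt_C} with $L_A L_Z=\mathcal O(1)$, $L_C=\mathcal O(1)$, $R=\mathcal O(1)$, and that the initializations obey Assumption~\ref{assump:commcost}; these are the per-algorithm checks deferred to Appendices~\ref{subsec:ABC_EXTRA}, \ref{subsec:ABC_NIDS}, and \ref{subsec:NIDS}. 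Under the stated stepsizes ($\gamma=2/(L+\mu)$ for the first two, $\gamma=1/L$ for NIDS), the contraction rates $\lambda$ are exactly those listed in Table~\ref{table:lambda}, taken from \cite{Xu2020}.

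The crux is the algebraic reduction of $1/(1-\lambda)$. For all three schemes $\lambda=\max\{a_\kappa,\sqrt{\rho_2}\}$ with $a_\kappa\in\{\tfrac{\kappa}{\kappa+1},\tfrac{\kappa-1}{\kappa+1},\sqrt{1-\kappa^{-1}}\}$, so $1-\lambda=\min\{1-a_\kappa,\,1-\sqrt{\rho_2}\}$. I would use the two elementary $\Theta$-equivalences $1-\sqrt{\rho_2}=\tfrac{1-\rho_2}{1+\sqrt{\rho_2}}=\Theta(1-\rho_2)$ (since $1+\sqrt{\rho_2}\in[1,2]$) and $1-\sqrt{1-\kappa^{-1}}=\tfrac{\kappa^{-1}}{1+\sqrt{1-\kappa^{-1}}}=\Theta(1/\kappa)$, together with $1-\tfrac{\kappa}{\kappa+1}=\tfrac{1}{\kappa+1}=\Theta(1/\kappa)$ and $1-\tfrac{\kappa-1}{\kappa+1}=\Theta(1/\kappa)$, to conclude
\[
1-\lambda=\Theta\!\Big(\min\big\{\tfrac1\kappa,\,1-\rho_2\big\}\Big),\qquad
\frac{1}{1-\lambda}=\Theta\!\Big(\max\big\{\kappa,\tfrac{1}{1-\rho_2}\big\}\Big).
\]
Substituting this into the iteration count and the bits/iteration of Theorem~\ref{thm:comm_cost_mesh} yields the two middle displays, and their product gives the total bit count; the only remaining simplification is $\log(1+\max\{\kappa,\tfrac{1}{1-\rho_2}\})=\Theta(\log(\max\{1+\kappa,\tfrac{1}{1-\rho_2}\}))$, valid because $1+\tfrac{1}{1-\rho_2}=\Theta(\tfrac{1}{1-\rho_2})$ inside the logarithm.

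Before invoking Theorem~\ref{thm:comm_cost_mesh} I must supply a $\sigma\in(\lambda,1)$ meeting its hypothesis $\tfrac{(1-\lambda)^2}{(1-\sigma)(\sigma-\lambda)}=\mathcal O(1)$. The natural choice $\sigma=(1+\lambda)/2$ gives $1-\sigma=\sigma-\lambda=(1-\lambda)/2$, hence the ratio equals $4$; this also fixes the ANQ tuning $\eta^0=\Theta(L_Z(\sigma-\lambda))$ and $\omega$ as required. The main obstacle, and the part genuinely specific to each algorithm, is not this substitution but the translation between the optimality measure of the theorem, $\tfrac1m\mathbb E[\|z^k-z^\infty\|_2^2]$, and the per-agent optimization error $\|\mathbf{x}^k-\mathbf{x}^\star\|_2^2$ appearing in the corollary: one must show the optimization-variable block of $z$ relates to $\mathbf{x}^\star$ so that the two accuracies differ only by factors absorbed into the $\log(d/\varepsilon)$ term, leaving the $\mathcal O$-order unchanged as $m,d\to\infty$. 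This identification, along with the assumption checks above, is what the cited appendices carry out; the complexity expressions themselves then follow mechanically.
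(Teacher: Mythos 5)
Your proposal is correct and follows essentially the same route as the paper: the corollary is obtained by specializing Theorem~\ref{thm:comm_cost_mesh} with the rates $\RTUQ$ from Table~\ref{table:lambda} (verified, together with Assumptions~\ref{assump:R_conv_z}--\ref{assump:Lipt_C} and \ref{assump:commcost}, in Appendices~\ref{subsec:ABC_EXTRA}, \ref{subsec:ABC_NIDS}, and \ref{subsec:NIDS}), reducing $1/(1-\RTUQ)$ to $\Theta(\max\{\kappa,1/(1-\rho_2)\})$ and taking $\RTQ=(1+\RTUQ)/2$. Your explicit $\Theta$-equivalences and the remark on relating $\tfrac1\m\mathbb E[\Vert{\bf z}^k-{\bf z}^\infty\Vert_2^2]$ to $\Vert{\bf x}^k-{\bf x}^*\Vert_2^2$ (via non-expansiveness of the proximal map applied to the ${\bf w}$-block) simply spell out steps the paper leaves implicit.
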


\begin{corollary}[ANQ-(Prox-)NEXT and ANQ-(Prox-)DIGing over
mesh networks, see Appendices \ref{subsec:ABC_NEXT}-\ref{subsec:ABC_DIGing}]\label{coro:comm_cost_NEXT_et_al}   Consider Problem \eqref{eq:P} over mesh networks as described above, and the ANQ-(Prox-)NEXT and ANQ-(Prox-)DIGing algorithms with stepsize  $\STEP = 2/(L+\mu)$, where the ANQ is  tuned as in Theorem~\ref{thm:comm_cost_mesh}. Then, the average number of bits/agent for $\Vert {\bf x}^k - {\bf x}^*\Vert_2^2 \leq \varepsilon$ is of the order of
\begin{align*}
\mathcal O\!\left(\!d\max\Big\{\kappa,\frac{1}{(1{-}{\rho_2})^2}\Big\}\!\log\!\!\Big(\!\max\!\Big\{\!1{+}\kappa,\frac{1}{(1{-}{\rho_2)^2}}\Big\}\!\Big)\!\log\!(d/\varepsilon)\!\!\right)\!,
\end{align*}
achieved in
\begin{align*}
{\mathcal O} \Big(\max\Big\{\kappa, \frac{1}{(1- {\rho_2})^2}\Big\}\log (d/\varepsilon)\Big)\quad 
\mathrm{ iterations}
\end{align*}
and with 
\begin{align*}
{\mathcal O} \Big(d\log\Big(\max\Big\{1+\kappa, \frac{1}{(1- {\rho_2})^2}\Big\}\Big)\Big)\ 
\mathrm{ bits/agent/iteration}.
\end{align*}
\end{corollary}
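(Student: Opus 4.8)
The plan is to treat this corollary as a direct specialization of Theorem~\ref{thm:comm_cost_mesh}, whose conclusion is already stated purely in terms of the unquantized rate $\RTUQ$; the only algorithm-specific work is to insert the explicit value of $\RTUQ$ for (Prox-)NEXT and (Prox-)DIGing and simplify. First I would verify that both schemes fall within the framework \eqref{eq:M}: using the mappings $\mathcal A$ and $\mathcal C^\ITERR$ exhibited in Appendices~\ref{subsec:ABC_NEXT}--\ref{subsec:ABC_DIGing}, one checks Assumptions~\ref{assump:R_conv_z}, \ref{assump:Lipt_A}, \ref{assump:Lipt_C}, and \ref{assump:commcost} (with the constants $L_A,L_C,L_Z$ being $\mathcal O(1)$ under the stated stepsize $\STEP=2/(L+\mu)$), so that Theorem~\ref{thm:comm_cost_mesh} applies. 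From Table~\ref{table:lambda} the relevant rates are $\RTUQ=\max\{\tfrac{\kappa-1}{\kappa+1},\sqrt{1-(1-\rho_2)^2}\}$ for (Prox-)NEXT and $\RTUQ=\max\{\tfrac{\kappa}{\kappa+1},\sqrt{1-(1-\rho_2)^2}\}$ for (Prox-)DIGing, and I would pick the admissible tuning $\RTQ=(1+\RTUQ)/2$, for which $\tfrac{(1-\RTUQ)^2}{(1-\RTQ)(\RTQ-\RTUQ)}=4=\mathcal O(1)$, meeting the hypothesis of Theorem~\ref{thm:comm_cost_mesh}.

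The heart of the argument is then the estimate of $1/(1-\RTUQ)$. Since $\RTUQ$ is a maximum of two terms, $1-\RTUQ$ is the minimum of the corresponding gaps and $1/(1-\RTUQ)$ is the maximum of their reciprocals, so I would bound each gap separately. For the condition-number term, $1-\tfrac{\kappa-1}{\kappa+1}=\tfrac{2}{\kappa+1}$ (resp. $1-\tfrac{\kappa}{\kappa+1}=\tfrac{1}{\kappa+1}$), whence its reciprocal is $\Theta(\kappa)$. For the network term I would rationalize $1-\sqrt{1-(1-\rho_2)^2}=\tfrac{(1-\rho_2)^2}{1+\sqrt{1-(1-\rho_2)^2}}$, which is $\Theta((1-\rho_2)^2)$ because the denominator lies in $[1,2]$; hence its reciprocal is $\Theta\!\big(1/(1-\rho_2)^2\big)$. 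Combining, $\tfrac{1}{1-\RTUQ}=\Theta\!\big(\max\{\kappa,\tfrac{1}{(1-\rho_2)^2}\}\big)$, and likewise $1+\tfrac{1}{1-\RTUQ}=\Theta\!\big(\max\{1+\kappa,\tfrac{1}{(1-\rho_2)^2}\}\big)$, the additive $1$ being harmless inside the logarithm since $\max\{\kappa,(1-\rho_2)^{-2}\}\geq 1$.

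Substituting these two estimates into the three displays of Theorem~\ref{thm:comm_cost_mesh} yields at once the claimed bits/agent/iteration $\mathcal O(d\log(\max\{1+\kappa,(1-\rho_2)^{-2}\}))$, the iteration count $\mathcal O(\max\{\kappa,(1-\rho_2)^{-2}\}\log(d/\varepsilon))$, and their product, the total bits/agent. A last minor step is to pass from the optimality measure $\tfrac1m\mathbb E[\|{\bf z}^k-{\bf z}^\infty\|_2^2]$ of Theorem~\ref{thm:comm_cost_mesh} to the stated $\|{\bf x}^k-{\bf x}^\star\|_2^2$: since the optimization-variable block of ${\bf z}$ consists of the local copies ${\bf x}_i$ and equals the consensual $\mathbf{x}^\star\mathbf 1$ at the fixed point ${\bf z}^\infty$, the $\mathbf x$-error is a sub-block of ${\bf z}-{\bf z}^\infty$ and the change of measure only rescales constants and $\log$-arguments that are absorbed by the $\mathcal O(\cdot)$. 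The only genuinely delicate point is the network-term reciprocal estimate, i.e.\ recognizing that $1-\sqrt{1-(1-\rho_2)^2}=\Theta((1-\rho_2)^2)$ rather than $\Theta(1-\rho_2)$ (which is what produces the squared factor distinguishing this corollary from Corollary~\ref{coro:comm_cost_EXTRA_et_al}); everything else is bookkeeping already carried out for the general theorem.
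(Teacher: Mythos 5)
Your proposal is correct and follows essentially the same route as the paper, which treats this corollary as an immediate specialization of Theorem~\ref{thm:comm_cost_mesh} after the assumptions are verified in Appendices~\ref{subsec:ABC_NEXT}--\ref{subsec:ABC_DIGing} and the rate $\RTUQ$ from Table~\ref{table:lambda} is substituted. You correctly isolate the one nontrivial computation, namely $1-\sqrt{1-(1-\rho_2)^2}=\Theta((1-\rho_2)^2)$, which is exactly what produces the squared network factor distinguishing this result from Corollary~\ref{coro:comm_cost_EXTRA_et_al}.
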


\begin{corollary}[ANQ-Primal-Dual over
mesh networks, see Appendix \ref{subsec:Dual}]\label{coro:comm_cost_PD}   {Consider Problem \eqref{eq:P} with $r\equiv 0$ over mesh networks as described above, and the ANQ-Primal-Dual algorithm with stepsize $\STEP = \frac{2L\mu}{\mu\rho_{m-1}({\bf L}) + L\rho_{1}({\bf L})}$, where the ANQ is tuned as   in Theorem~\ref{thm:comm_cost_mesh}. Then, the average number of bits/agent for $\Vert {\bf x}^k - {\bf x}^*\Vert_2^2 \leq \varepsilon$ is of the order of
\footnote{Note that $\frac{\rho_1({\bf L})}{\rho_{m-1}({\bf L})}=\mathcal O((1-\rho_2)^{-1})$.}
\begin{align*}
\mathcal O\left(d\, \frac{\kappa}{1-\rho_2}\log\Big(1+\frac{\kappa}{1-\rho_2}\Big)\log(d/\varepsilon)
\right),
\end{align*}
achieved in
\begin{align*}
\mathcal O\bigg(
\frac{\kappa}{1-\rho_2}
\cdot 
\log(d/\varepsilon)
\bigg) \quad  \mathrm{iterations}
\end{align*}
and with 
\begin{equation}
\label{commcost1}
\mathcal O \bigg(d\cdot\log\Big(1+\frac{\kappa}{1-\rho_2}\Big)
\bigg) \quad  \mathrm{bits/agent/iteration}.
\end{equation}}
\end{corollary}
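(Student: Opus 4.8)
The plan is to recognize this corollary as a direct instantiation of Theorem~\ref{thm:comm_cost_mesh}, applied to the unquantized Primal-Dual scheme of \cite{Uribe2021, Magnusson2020}. Three ingredients are needed: (i) cast the Primal-Dual method in the fixed-point form \eqref{eq:M} (here with a single communication round, $\q=1$, since the scheme is captured by \eqref{eq:M0}) and verify that it meets Assumptions~\ref{assump:R_conv_z}, \ref{assump:Lipt_A}, \ref{assump:Lipt_C}, and \ref{assump:commcost}; (ii) read off its contraction rate $\RTUQ$ from Table~\ref{table:lambda}; and (iii) substitute $\RTUQ$ into the generic complexity bounds of Theorem~\ref{thm:comm_cost_mesh} and simplify.

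For step (i), I would take as the state $\mathbf z$ the stacked primal copies $\mathbf x_i$ together with the dual variable associated with the Laplacian consensus constraint ${\bf L}^{1/2}\mathbf{x}={\bf 0}$; the communication map $\mathcal C$ is the quantity exchanged with neighbors through $\bf W$ (equivalently $\bf L$), and the computation map $\mathcal A$ combines a local gradient step with the dual ascent step. Since $\mathcal A$ is affine in the received signals (matrix products with $\bf W$ plus $L$-smooth, $\SC$-strongly-convex gradient steps) and $\mathcal C$ is likewise affine/Lipschitz, Assumptions~\ref{assump:Lipt_A} and~\ref{assump:Lipt_C} hold with $L_A, L_C, L_Z$ controlled by $\Vert{\bf W}\Vert_2$, $L$, and $\STEP$, all $\mathcal O(1)$; together with the standard bounds on the initialization this yields Assumption~\ref{assump:commcost}. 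Assumption~\ref{assump:R_conv_z} (pseudo-contractivity) follows from the unquantized convergence theory with the prescribed stepsize $\STEP = 2L\SC/(\SC\rho_{m-1}({\bf L}) + L\rho_{1}({\bf L}))$, the value that balances the primal and dual conditioning.

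For steps (ii)--(iii), set $\chi \triangleq \rho_1({\bf L})/\rho_{m-1}({\bf L})$, so that clearing the $1/\kappa$ factors in Table~\ref{table:lambda} gives $\RTUQ = (\kappa\chi - 1)/(\kappa\chi + 1)$. A one-line computation yields $1-\RTUQ = 2/(1+\kappa\chi)$, hence $1/(1-\RTUQ) = \Theta(\kappa\chi)$. Invoking the footnote bound $\chi = \mathcal O((1-\rho_2)^{-1})$ gives $1/(1-\RTUQ) = \mathcal O(\kappa/(1-\rho_2))$. Substituting this into the three displayed quantities of Theorem~\ref{thm:comm_cost_mesh}---bits/agent $\mathcal O(d\log(1+\tfrac{1}{1-\RTUQ})\tfrac{1}{1-\RTUQ}\log(d/\varepsilon))$, iterations $\mathcal O(\tfrac{1}{1-\RTUQ}\log(d/\varepsilon))$, and bits/agent/iteration $\mathcal O(d\log(1+\tfrac{1}{1-\RTUQ}))$---reproduces verbatim the three claimed expressions. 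The admissibility condition $(1-\RTUQ)^2/((1-\RTQ)(\RTQ-\RTUQ)) = \mathcal O(1)$ required by Theorem~\ref{thm:comm_cost_mesh} is met, e.g., by $\RTQ = (1+\RTUQ)/2$, for which the ratio equals $4$.

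The main obstacle is step (i), specifically verifying Assumption~\ref{assump:R_conv_z}: one must identify the weighted/lifted norm on the joint primal--dual variable with respect to which the Primal-Dual map is $\RTUQ$-pseudo-contractive at the stated rate, and confirm it is normalized so that $\Vert\bullet\Vert_2 \le \Vert\bullet\Vert$ as Assumption~\ref{assump:R_conv_z} requires; this is where the spectral ratio $\chi$ of the Laplacian enters the rate and where the stepsize $\STEP$ must be shown optimal. Once the contraction is established and the constants $L_A, L_C, L_Z$ are bounded (the remaining, routine part), the complexity claim follows mechanically from Theorem~\ref{thm:comm_cost_mesh}. These verifications are carried out in Appendix~\ref{subsec:Dual}.
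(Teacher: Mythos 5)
Your proposal is correct and follows essentially the same route as the paper: cast Primal-Dual in the form \eqref{eq:M} with $\q=1$, verify Assumptions~\ref{assump:R_conv_z}--\ref{assump:Lipt_C} and \ref{assump:commcost} (done in Appendix~\ref{subsec:Dual}), read $\RTUQ$ from Table~\ref{table:lambda}, compute $1/(1-\RTUQ)=\Theta(\kappa\,\rho_1({\bf L})/\rho_{m-1}({\bf L}))=\mathcal O(\kappa/(1-\rho_2))$, and substitute into Theorem~\ref{thm:comm_cost_mesh}. The only cosmetic difference is in the casting details: the paper takes the state to be the dual variable alone (${\bf z}={\bf y}$, with the primal minimizer serving as the communicated signal $\mathcal C^1$), and obtains $L_C=0$, $L_Z=1/\mu$ from strong convexity and $L_A=\STEP\rho_1({\bf L})$, rather than constants tied to $\Vert{\bf W}\Vert_2$ as you sketch—but these are exactly the routine verifications you defer to the appendix.
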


 {It is interesting to compare the ANQ-Primal-Dual   (Corollary~\ref{coro:comm_cost_PD}) with   \cite{Kovalev2020},
which applies quantization to a Primal-Dual scheme using   the widely adopted  compression rule.  The number of iterations required for the Option-D scheme (the best performing one) in \cite{Kovalev2020}   to achieve $\varepsilon$-accuracy,
using the rand-$K$ or dit-$K$ compression methods (see \cite[Example 4]{Kovalev2020}),
is\footnote{The asymptotic result in \cite[Theorem 5]{Kovalev2020} maps to our setting with the following modifications:
$\rho$ (the ratio between the largest and the second smallest eigenvalues of $\mathbf L$) and $\rho_\infty$ (the ratio of the largest weight and the second smallest eigenvalue of $\mathbf L$) defined therein are both of order $\mathcal O(1/(1-\rho_2))$;
when adopting the setting on the initial error from our Assumption \ref{assump:commcost} and the problem setting of \eqref{eq:P}, the parameter $m$ therein becomes 1 and the numerator in the $\log$ term becomes $d$.}
\begin{align}
\mathcal O\Big(\frac{d}{K}\frac{\kappa}{1-\rho_2}\log(d/\varepsilon)\Big), \label{eq:niter_optiond}
\end{align}
Therefore,
to achieve the same number of iterations as ANQ-Primal-Dual (in a $\mathcal O$-sense),
rand-$K$ in \cite{Kovalev2020} requires $K=\Theta(d)$,
whereas dit-$K$ in \cite{Kovalev2020} requires $S=2^{K-1}-1=\Theta(\sqrt{d})$,
with a communication cost of (in bits/agent/iteration)
\begin{align}\label{commcost2}
\begin{cases}
\mathcal O\Big(d+B\Big) &\text{(dit-$K$)},\\
\mathcal O\Big(d(B+\log d)\Big)&\text{(rand-$K$)},
\end{cases}
\end{align}
where $B$ is the number of bits used to encode each scalar \emph{with negligible loss in precision.}

Comparing the communication costs
\eqref{commcost1} and \eqref{commcost2}, the following remarks are in order:  
\textbf{(i)}  {If we neglect the $B$-dependence, our scheme has the same scaling behavior as dit-$K$
and better scaling behavior than rand-$K$, as the problem dimension $d$ increases.}
\textbf{(ii)} It is not clear how the number of bits $B$  should be chosen to encode scalars
in \cite{Kovalev2020} as a function of the system parameters, in a $\mathcal O$-sense, in order to 
\emph{make loss in precision truly negligible} (a condition required by the convergence analysis therein). On the other hand,
 our communication cost analysis, which focuses on
quantization below machine precision,
provides an explicit answer to this question.}

\section{Numerical Results} \label{sec:simulation}
In this section, we validate numerically  our theoretical findings and compare different distributed algorithms using quantization. We consider two instances of \eqref{eq:P}: a linear regression and a logistic regression problem,   both with $F$ strongly convex. 
 The communication network is modeled as an undirected graph of $\m=20$ agents,  generated by the Erdos-Renyi model with edge activation probability of $0.6$.
We measure performance of the algorithms using the mean square error  ${\rm MSE}^k$ and the network communication cost ${\rm C_{cm}}(\varepsilon)$ incurred over the network to reach ${\rm MSE}^k \leq \varepsilon$, defined as
\begin{align}
{\rm MSE}^k{\triangleq}\frac{\sum_{i=1}^\m \Vert {\bf x}_i^{k} - {\bf x}^*\Vert_2^2}{\m\Vert{\bf x}^*\Vert_2^2},\ 
{\rm C_{cm}}(\varepsilon){\triangleq}\sum_{k = 0}^{k_\varepsilon} \sum_{\ITERR=1}^\q \sum_{i=1}^\m b_{i}^{k,\ITERR},  \label{eq:sim_metric}
\end{align}
 where $k_\varepsilon \triangleq \min_{k \geq 0} {\rm MSE}^k \leq \varepsilon$ is the number of iterations required to achieve $\varepsilon$-accuracy, $\mathbf x^*$ is the optimal solution of \eqref{eq:P},
 and $b_{i}^{k, \ITERR}$ is the number of bits used by the  quantizer to encode the  $\ITERR$th transmitted signal by agent $i$ at iteration $k$.
\subsection{Linear regression problem} \label{sec:sim:ls}
\noindent \textbf{Problem setting:} Consider the following linear regression problem  over mesh networks: 
\begin{align}
f_i({\bf x}){=}\frac{1}{2}\Vert {\bf U}_i {\bf x} - {\bf v}_i\Vert_2^2 + \frac{0.01}{2}\Vert {\bf x}\Vert_2^2\ \text{and}\ 
r({\bf x}){=}\alpha \Vert {\bf x}\Vert_1, \label{eq:problem_ls}
\end{align}
where ${\bf U}_i \in \mathbb R^{20 \times 40}$   and ${\bf v}_i \in \mathbb R^{20 \times 1}$ are the feature vector and
 observation measurements, respectively, accessible only by agent $i$. 
The matrix $\mathbf U_i$ is generated   independently across agents, according to the model in \cite{Agarwal2010}, namely: $[\mathbf U_i]_{:,1}\sim\mathcal N(\mathbf 0,\frac{1}{1-\beta^2}\mathbf I)$ (first column), and for the other  columns $q>1$,
 $[{\bf U}_{i}]_q|[{\bf U}_{i}]_{q-1}\sim
\mathcal N(\beta[{\bf U}_{i}]_{q-1},\mathbf I)$, where $\beta=0.3$.
In this way, each row of ${\bf U}_i$ is a Gaussian random vector with zero mean and covariance depending on $\beta$: larger $\beta$ generates more ill-conditioned covariance matrices.
Then, letting ${\bf x}_0 \in \mathbb R^{40}$ be the ground truth vector, generated as a sparse vector with $70\%$ zero entries, and  i.i.d. nonzero entries  drawn from  $\mathcal N(0,1)$, we generate  ${\bf v}_i$
as ${\bf v}|({\bf U},{\bf x}_0)\sim\mathcal N({\bf U}{\bf x}_0, 0.04{\bf I})$.
We use $\mu, L$ for   the strong convexity and smoothness parameters of each $f_i$, respectively.

 We test several distributed algorithms considering either smooth   ($\alpha = 0$) or non-smooth ($\alpha>0$) instances of the least square problem \eqref{eq:problem_ls}. In fact, most of the existing quantization schemes are applicable only to smooth optimization problems. 
The free parameters of these algorithms are optimized based on the recommendations in the original papers, while optimizing numerical convergence with the smallest number of bits, unless otherwise stated; the weight matrix $\tilde{\bf W}$ used to  mix the received signals is constructed according to the Metropolis-Hastings rule \cite{Xiao2004}; the number of bits transmitted by each scheme as reported in the figures is per agent, per dimension, per iteration.
For each quantized algorithm, we choose $\RTQ = 0.99 \cdot \RTUQ + 0.01$, where  $\RTUQ = ({\rm MSE}^{100}/{\rm MSE}^{50})^{0.01}$
is the numerical estimate of the  convergence rate of its unquantized counterpart  {(i.e., implemented at machine precision)}. In the simulations, all algorithms except those using LPQ are evaluated with 1 realization since they are deterministic algorithms. Those using the probabilistic quantizer LPQ, i.e., LEAD \cite{Liu2021} and COLD \cite{Zhang2021}, are averaged over 10 realizations, with fixed ${\bf U}_i, {\bf V}_i, {\bf x}^0$, and network topology.

\begin{figure*}
     \centering
     \hfill
     \begin{subfigure}[b]{0.45\textwidth}
         \includegraphics[width = .9\linewidth,trim={0 50 40 80},clip]{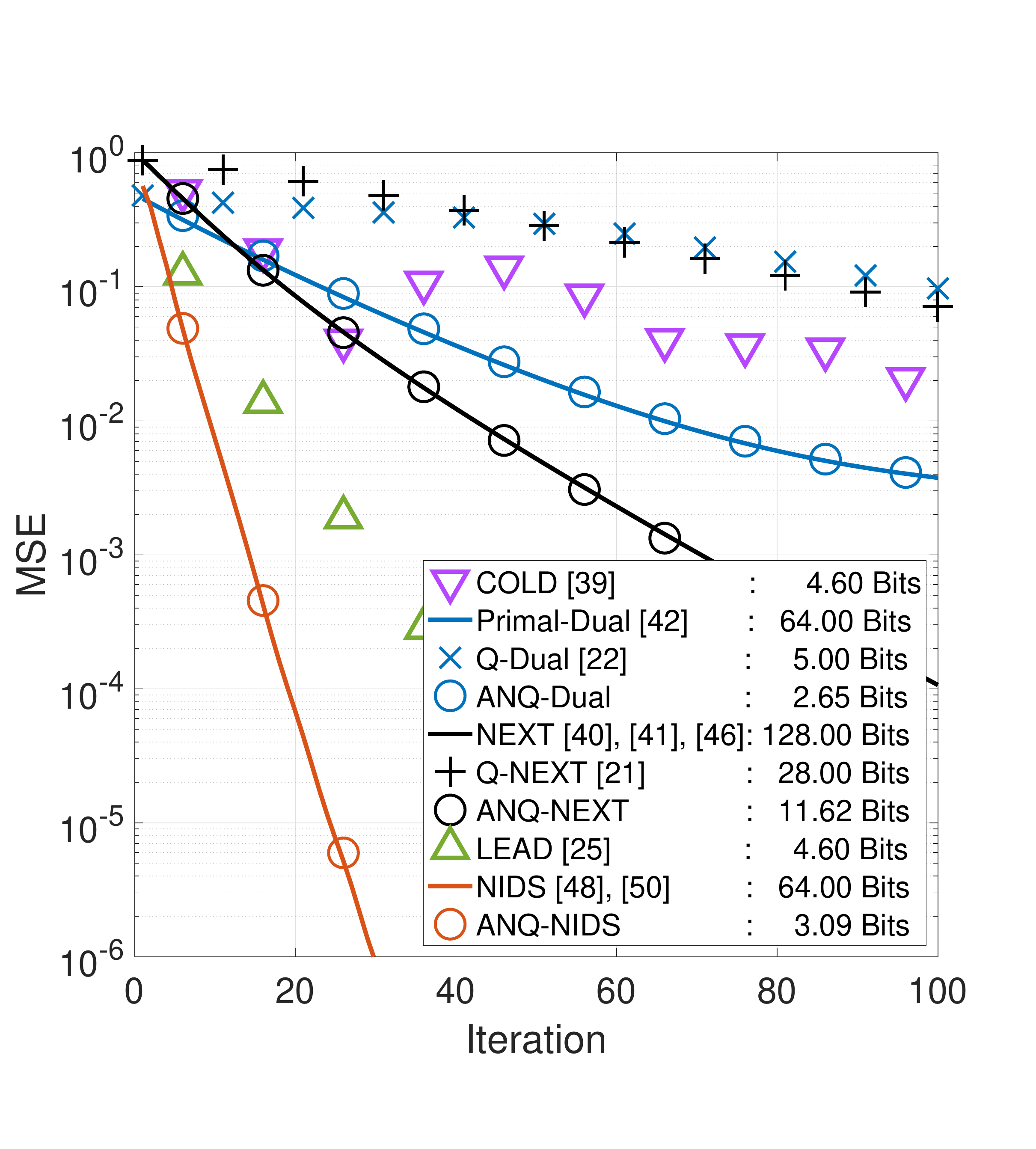}	
	    \caption{Smooth linear regression
	    } \label{fig:LS:smooth}
     \end{subfigure}
     \hfill
     \begin{subfigure}[b]{0.45\textwidth}
        \includegraphics[width = .9\linewidth,trim={0 50 40 80},clip]{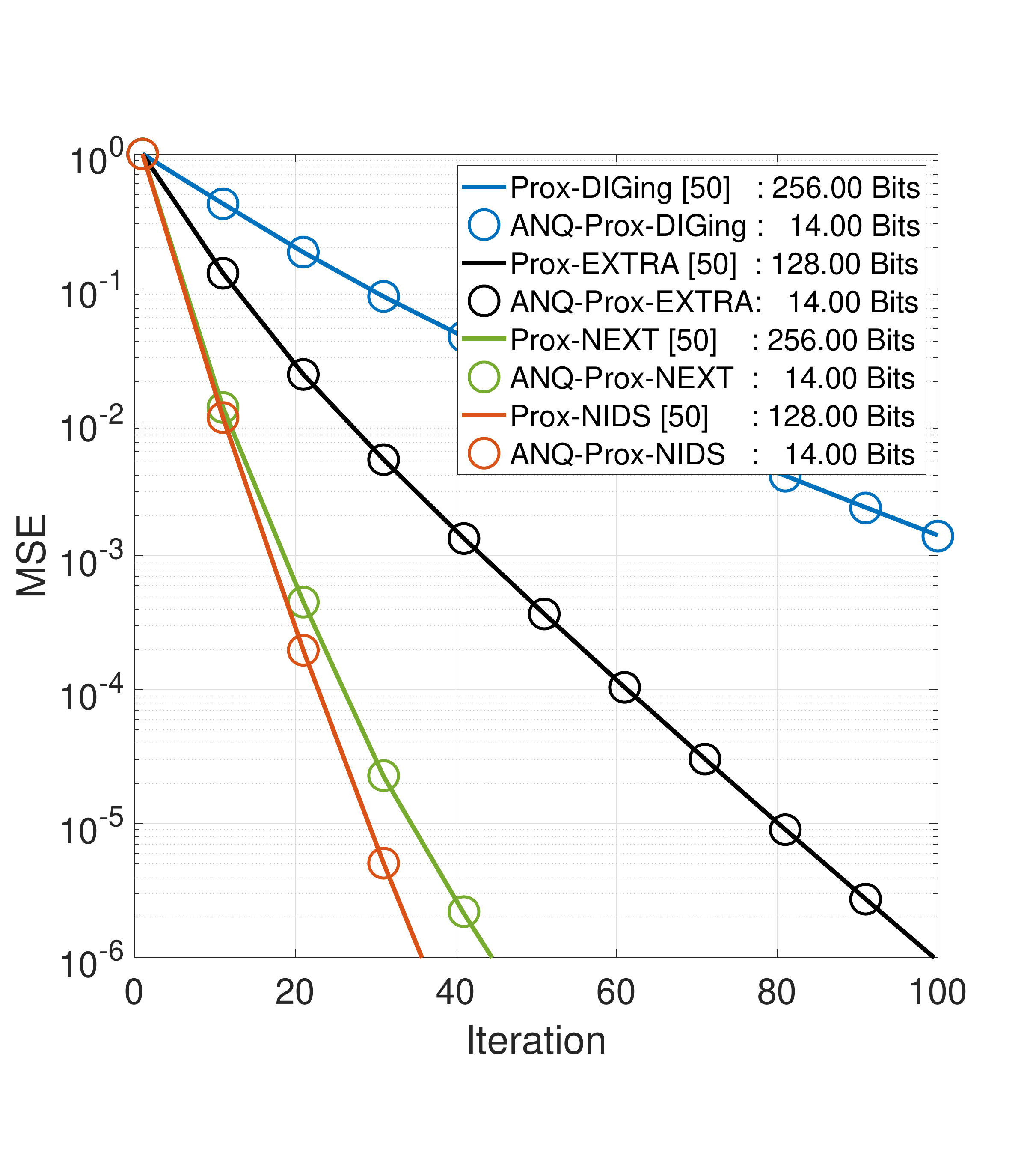}
	    \caption{Non-smooth linear regression} \label{fig:LS:nonsmooth}
     \end{subfigure}
     \hfill
     \caption{Linear regression problem  \eqref{eq:problem_ls}: MSE versus iterations.
    Solid curves and markers refer to  algorithms implemented using machine precision and quantized communications, respectively. In the legend, for each scheme, we report the number of bits transmitted, per agent, per dimension, per iteration.}
\end{figure*}

\noindent \textbf{Smooth linear regression (Fig. \ref{fig:LS:smooth}):} We begin by considering the smooth linear regression problem. We consider the following  
{benchmark schemes, using  machine precision (64-bit representation for each scalar):}
\begin{enumerate}
     \item[1)] \texttt{Primal-Dual} \cite{Uribe2021} with step-size $\STEP = 2L\mu/(\mu\rho_{m-1}({\bf L}) + L\rho_1({\bf L}))$ (\cite[Proposition 2]{Magnusson2020}), where ${\bf L}$ is the graph Laplacian matrix associated with the graph.
   \item[2)]   \texttt{NEXT} \cite{Lorenzo2016, Qu2018, Sun2019} with step-size  $\STEP = 0.0029$, manually tuned for fastest practical  convergence.
 \item[3)]  \texttt{NIDS} \cite{Xu2020, Li2019} with step-size  $\STEP = \frac{2}{L+\mu}$  and mixing matrix ${\bf W}=[(1+\nu){\mathbf I}+(1-\nu)\tilde{\mathbf W}]/2$,  with $\nu = 0.001$.
\end{enumerate}
In addition, we consider the following algorithms, that implement the above benchmark schemes using quantized communications:
\begin{enumerate}
	\item[4)] \texttt{Q-Dual} \cite{Magnusson2020} and \texttt{Q-NEXT} \cite{Kajiyama2020};
    \item[5)] \texttt{ANQ-Dual}: this is the Primal-Dual algorithm   \cite{Uribe2021} equipped with the proposed  {deterministic} ANQ  (see Appendix~\ref{subsec:Dual}), with $\QNSB^0 = 0.01$ and $\QNSC = \bar{\QNSC}/2$ [recall that $\bar{\QNSC}$    is defined in \eqref{omega_bar}];
    \item[6)] \texttt{ANQ-NEXT}: this is
    the NEXT algorithm \cite{Lorenzo2016, Qu2018, Sun2019} quantized using the deterministic ANQ with  $\QNSB^0 = 0.029$ and $\QNSC = \bar{\QNSC}/2$;
    \item[7)] \texttt{ANQ-NIDS}: this is an instance of the NIDS algorithm \cite{Xu2020, Li2019} equipped with the  {deterministic} ANQ  (see Appendix \ref{subsec:NIDS}) with parameters $\QNSB^0 = 0.1$ and $\QNSC = \bar{\QNSC}/2$
    {\item[8)] \texttt{LEAD} \cite{Liu2021} and \texttt{COLD} \cite{Zhang2021},
    both implemented using the low-precision quantization (LPQ) 
    \cite{Taheri2020}:
    to transmit a signal $\mathbf x$,
the amplitude $\Vert {\bf x}\Vert_2$ is encoded at machine precision (64 bits), and 3 bits are adopted to encode each normalized element $x_i/\Vert {\bf x}\Vert_2$.\label{hltext1}}
\end{enumerate}

In Fig.~\ref{fig:LS:smooth},  we plot  the MSE  versus {iteration} index $k$. 
 {Remarkably, all algorithms, when equipped with the proposed ANQ, incur a negligible loss of convergence speed with respect to their machine precision counterpart.}
 Comparing  ANQ with the compression-based distributed algorithms LEAD \cite{Liu2021} and COLD \cite{Zhang2021}, we infer that the proposed ANQ-NIDS is faster while using   less bits.
  Comparing
 {ANQ with the state-of-the-art quantized algorithms,} we notice that ANQ is more communication-efficient than
 {Q-NEXT and Q-Dual, which instead use}
 deterministic uniform quantizers with shrinking range:  
 ANQ-NEXT (11.62 bits) and  ANQ-Dual (2.65 bits) use less bits per iteration  than   Q-NEXT   (28 bits) and Q-Dual (5 bits), respectively, while  converging faster. Note that, with the parameters chosen as recommended in Theorem \ref{thm:conv}, all ANQ-based algorithms shown in the figure have convergence guarantees,
 while Q-Dual and Q-NEXT do not  in the simulated setting.
 In fact, Q-Dual and Q-NEXT use 5 and 28 bits, respectively,
 which fall below the minimum number of bits that guarantee linear convergence, calculated to be 13 from \cite[Theorem 1]{Magnusson2020} and 78  from \cite[Theorem 4]{Kajiyama2020}, respectively.

\noindent\textbf{Non-smooth linear regression (Fig. \ref{fig:LS:nonsmooth}):}  We now move to the non-smooth instance of \eqref{eq:problem_ls}, with $\alpha = 10^{-4}$. To our knowledge, there is no existing quantized algorithms solving such instance of \eqref{eq:problem_ls}. Hence,
 we tested the following ANQ-based quantized algorithms:   
\begin{enumerate}
  \item[1)] \texttt{ANQ-Prox-EXTRA}: this is an instance of the Prox-EXTRA algorithm \cite{Xu2020} equipped with the  {deterministic} ANQ  (see Appendix \ref{subsec:ABC_EXTRA}) with parameters $\QNSB^0 = 6.67\times 10^{-4}$ and $\QNSC = \bar{\QNSC}/2$;
  \item[2)] \texttt{ANQ-Prox-NEXT}: this is   the Prox-NEXT algorithm \cite{Xu2020} equipped with the {deterministic} ANQ  (see Appendix \ref{subsec:ABC_NEXT}) with parameters $\QNSB^0 = 2.34\times 10^{-3}$ and $\QNSC = \bar{\QNSC}/2$;
  \item[3)] \texttt{ANQ-Prox-DIGing}: this is   the Prox-DIGing algorithm \cite{Xu2020} equipped with the  {deterministic} ANQ  (see Appendix \ref{subsec:ABC_DIGing}) with parameters $\QNSB^0 = 3.05 \times 10^{-3}$ and $\QNSC = \bar{\QNSC}/2$;
  \item[4)] \texttt{ANQ-Prox-NIDS}: this is   the Prox-NIDS algorithm in \cite{Xu2020} equipped with the  {deterministic} ANQ  (see Appendix \ref{subsec:ABC_NIDS}) with parameters $\QNSB^0 = 7.7 \times 10^{-4}$ and $\QNSC = \bar{\QNSC}/2$.
\end{enumerate}

\begin{figure*}
     \centering
     \hfill
     \begin{subfigure}[b]{0.45\textwidth}
         \includegraphics[width = .9\linewidth,trim={0 50 40 80},clip]{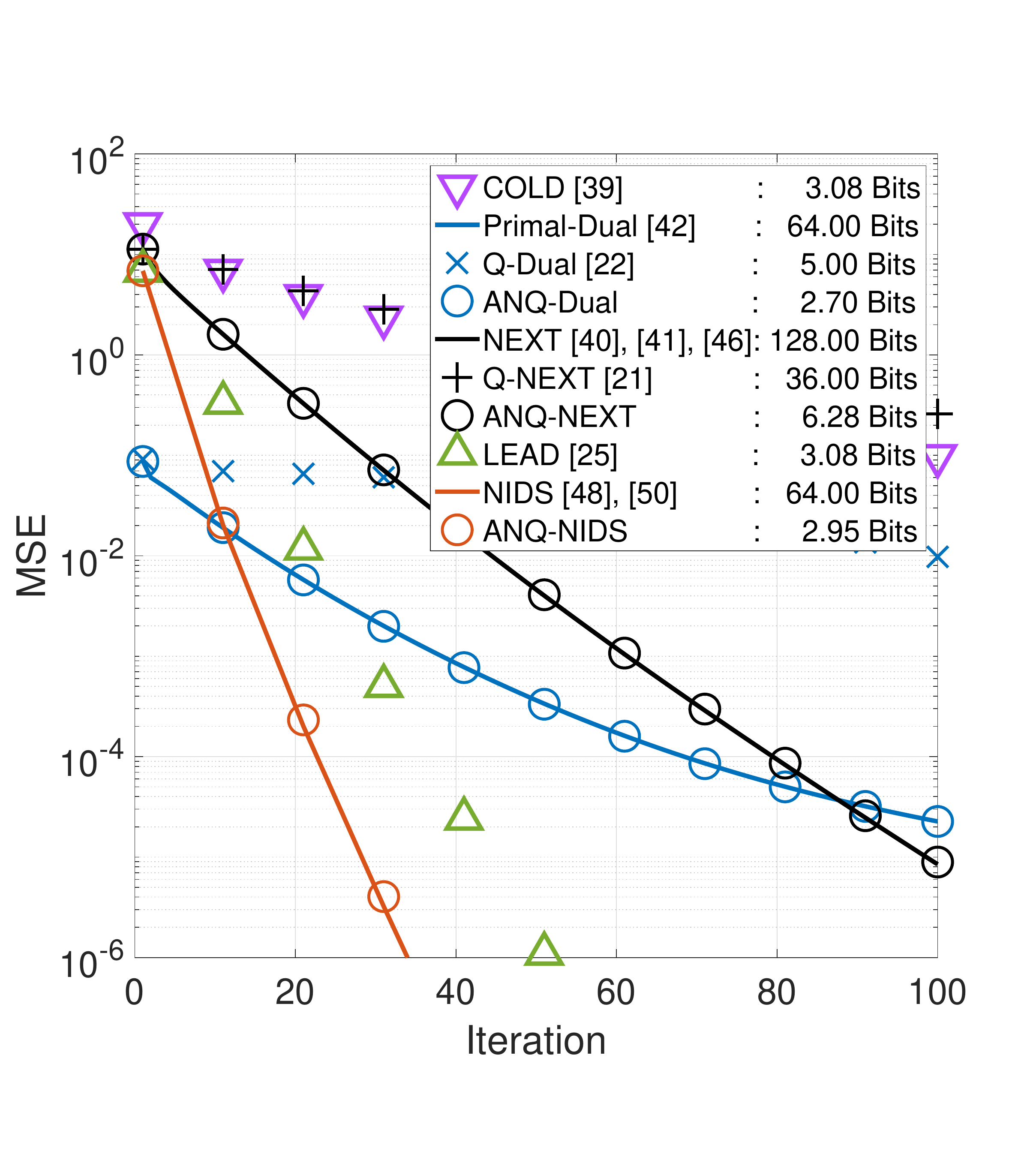}	
	    \caption{Smooth logistic regression
	    } \label{fig:logit:smooth}
     \end{subfigure}
     \hfill
     \begin{subfigure}[b]{0.45\textwidth}
        \includegraphics[width = .9\linewidth,trim={0 50 40 80},clip]{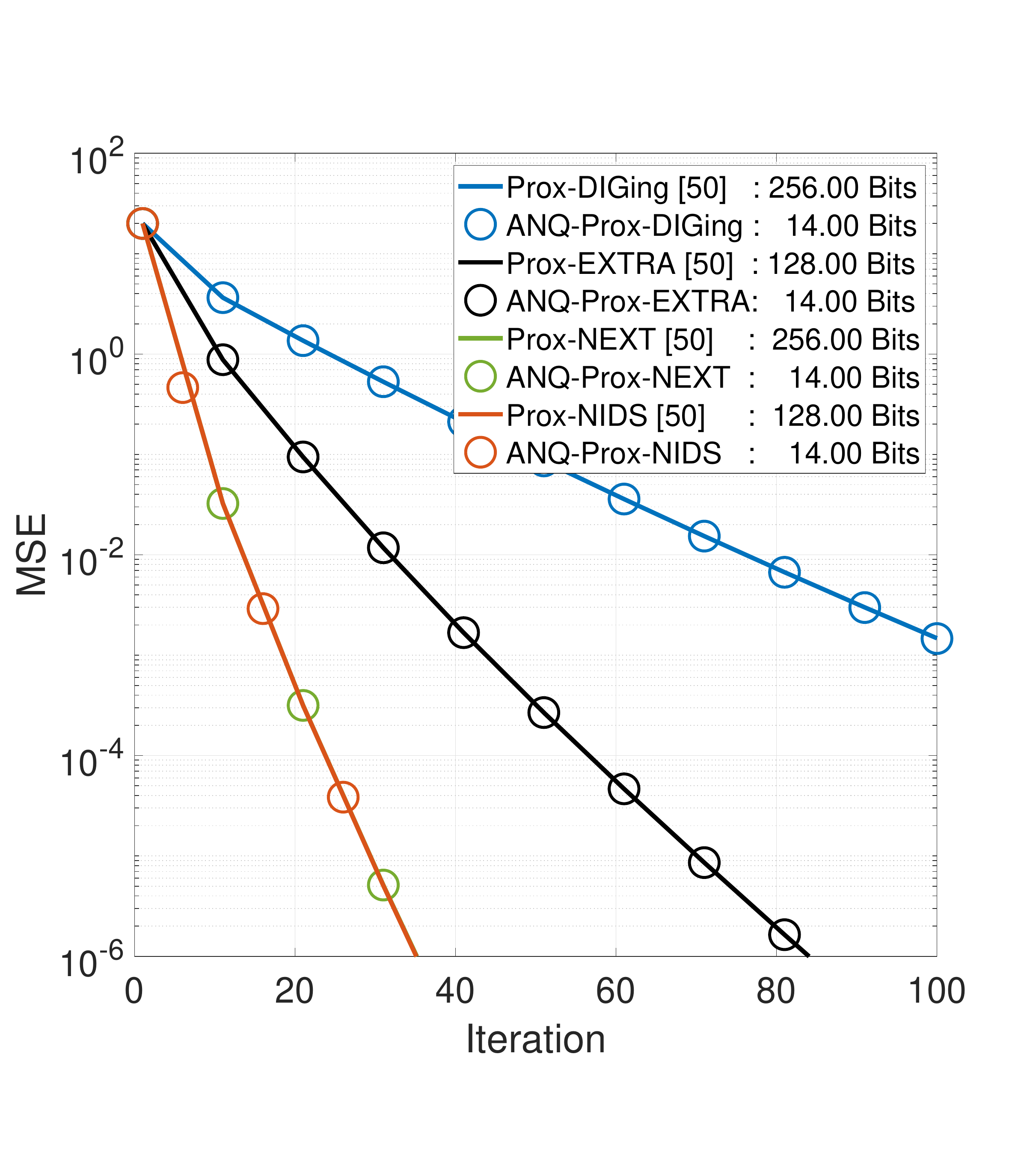}
	    \caption{Non-smooth logistic regression } \label{fig:logit:nonsmooth}
     \end{subfigure}
     \hfill
     \caption{Logistic regression problem  \eqref{eq:problem_logit}: MSE versus iterations.
    Solid curves and markers refer to  algorithms implemented using machine precision and quantized communications, respectively. In the legend, for each scheme, we report the number of bits transmitted, per agent, per dimension, per iteration.}
\end{figure*}

As benchmark, we also included their unquantized counterparts, {implemented at machine precision}; in all these schemes, we used the weight matrix  ${\bf W}=[(1+\nu){\mathbf I}+(1-\nu)\tilde{\mathbf W}]/2$ with $\nu = 0.001$; the step-size is chosen according to  \cite{Xu2020}, namely:   $\STEP = \frac{2\rho_{m}({\bf W})}{L + \mu \rho_{m}({\bf W})}$ for Prox-EXTRA, $\STEP = \frac{2\rho_{m}({\bf W}^2)}{L + \mu \rho_{m}({\bf W}^2)}$ for Prox-DIGing, and $\STEP = \frac{2}{L + \mu}$ for Prox-NEXT and Prox-NIDS.

Fig. \ref{fig:LS:nonsmooth} plots the MSE achieved by all the algorithms versus the iteration index.  As predicted,  all four quantized schemes converge linearly.
 Remarkably, all of the ANQ-equipped algorithms incur a negligible loss of convergence speed with respect to their machine precision counterparts, while using a 
 fraction of the communication budget -- only $14$ bits per agent/dimension/iteration.

\subsection{Logistic regression}  \label{sec:sim:logit}
We now consider the distributed logistic regression problem using  the MNIST dataset \cite{Lecun1998}. This is an instance of \eqref{eq:P} with
\begin{align}\nonumber
&f_i({\bf x}) = \frac{0.01}{2} \Vert {\bf x}\Vert_2^2 + \frac{1}{3000}\sum_{p=1}^{3000}\ln\Big(1+\exp\big(-v_{i, p} {\bf u}_{i, p}^\top{\bf x}\big)\Big),\\&  \quad \text{and} \quad r({\bf x}) =  \alpha \Vert {\bf x}\Vert_1, \label{eq:problem_logit}
\end{align}
where ${\bf u}_{i, p} \in \mathbb R^{784 \times 1}$ and $v_{i, p} \in \{-1, 1\}$ are the feature vector and
labels, respectively, only accessible by agent $i$. Here we implement the one-vs.-all scheme, i.e., the goal is to distinguish the data of label '0' from others. To generate ${\bf u}_{i,p}$, we first flatten each picture of size $28 \times 28$ in MNIST into a real feature vector of length $28 \times 28 = 784$,
and then normalize it to unit $l_2$ norm. We then allocate equal number of feature vectors and labels to each agent. In the simulations, all algorithms except those using LPQ are evaluated with 1 realization since they are deterministic algorithms. Those using the probabilistic quantizer LPQ, i.e., LEAD \cite{Liu2021} and COLD \cite{Zhang2021}, are averaged over 10 realizations, with fixed feature/label allocations and network topology. 
 
 \noindent \textbf{Smooth logistic regression (Fig. \ref{fig:logit:smooth}):} We begin by  considering the smooth logistic regression problem \eqref{eq:problem_logit}, with $\alpha = 0$. We tested the same algorithms (with the same tuning)  as described in \secref{sec:sim:ls} for the smooth linear regression problem.
In Fig.~\ref{fig:logit:smooth},  we plot  the MSE  versus {iteration} index $k$. Consistently with the results in  Fig.~\ref{fig:LS:smooth}, we notice the following facts.  ANQ-NIDS achieves the fastest convergence, followed by ANQ-NEXT, ANQ-Dual, Q-Dual   and Q-NEXT. Comparing our quantization method with existing ones  on the same unquantized algorithm, we notice that the proposed ANQ is more communication-efficient than Q-NEXT and Q-Dual:
 ANQ-NEXT (6.28 bits) and  ANQ-Dual (2.7 bits) use less bits per iteration  than   Q-NEXT  (36 bits) and Q-Dual (5 bits), respectively, while at the same time converging faster. Comparing with the compression-based algorithms LEAD and COLD, it is shown that ANQ-NIDS achieves  better convergence rate with less bits.

 \noindent\textbf{Non-smooth logistic regression (Fig. \ref{fig:logit:nonsmooth}):}  We now consider the non-smooth instance of the logistic regression problem    \eqref{eq:problem_logit}, with $\alpha = 10^{-4}$. We tested the same algorithms (with the same tuning)  as described in \secref{sec:sim:ls} for the non-smooth linear regression problem.
Fig. \ref{fig:logit:nonsmooth} plots the MSE achieved by all the algorithms versus   iterations $k$. The results confirm the trends already commented in   Fig. \ref{fig:LS:nonsmooth}.

\begin{figure*}
     \centering
     \hfill
     \begin{subfigure}[b]{0.45\textwidth}
         \includegraphics[width =.9\linewidth,trim={0 40 40 70},clip]{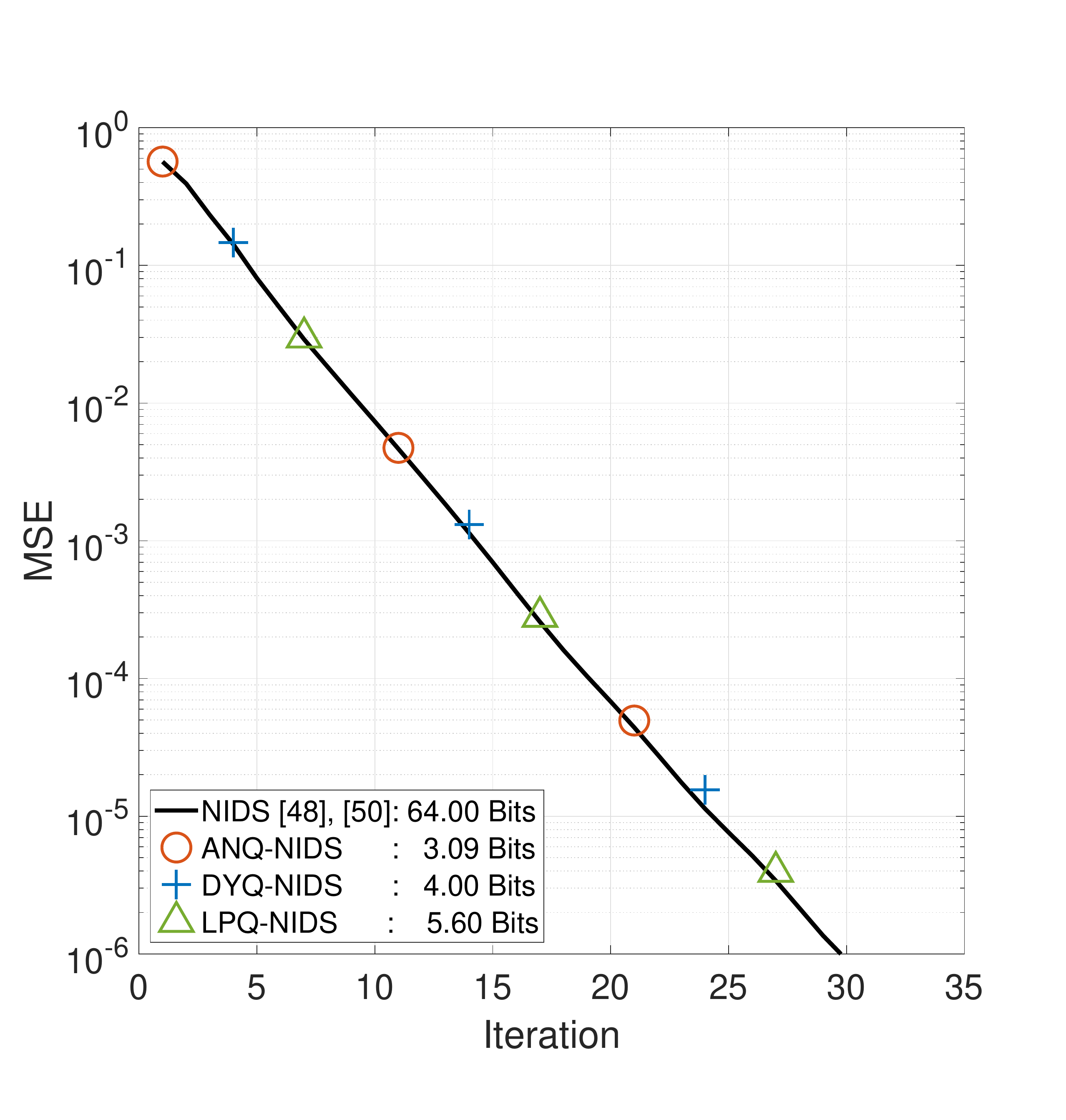}
	    \caption{Smooth linear regression  \eqref{eq:problem_ls}
	    } \label{fig:quant:LS}
     \end{subfigure}
     \hfill
     \begin{subfigure}[b]{0.45\textwidth}
        \includegraphics[width =.9\linewidth,trim={0 40 40 70},clip]{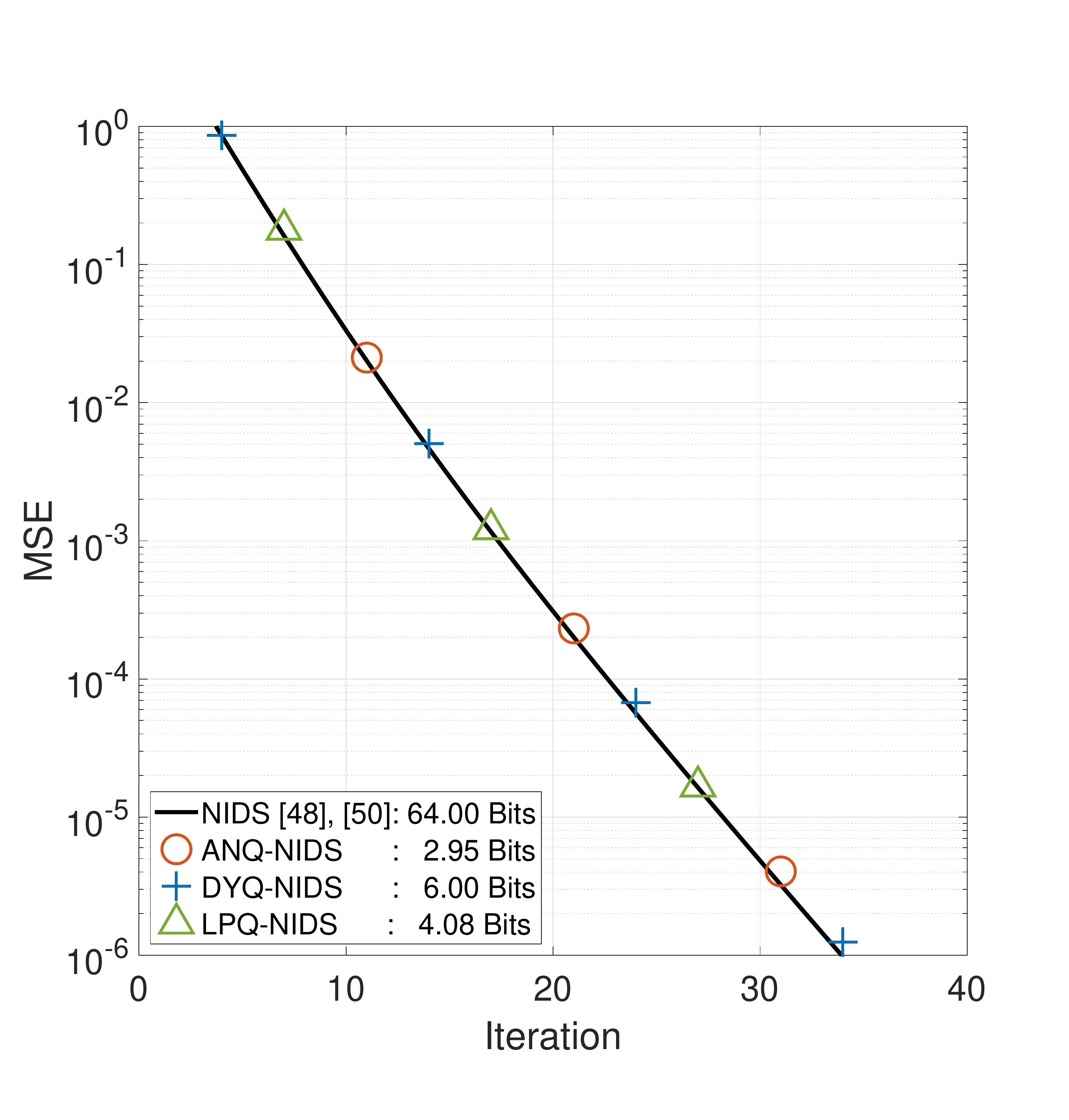}
	    \caption{Smooth logistic regression  \eqref{eq:problem_logit}} \label{fig:quant:logit}
     \end{subfigure}
     \hfill
     \caption{MSE versus iterations
      for different quantization rules applied to NIDS \cite{Li2019, Xu2020}.}
\end{figure*}

\subsection{Comparison of quantization rules} \label{subsec:sim:quant}
From the above results, it is clear that ANQ-NIDS outperforms existing algorithms using dynamic quantization (DYQ)--including Q-Dual and Q-NEXT--as well as those employing  low-precision quantization (LPQ)--such as  LEAD and COLD.
This advantage may be due to the \emph{black-box} nature of ANQ:
it can be applied to a variety of  distributed  algorithms, including those  known in the literature to be the 
fastest ones. This is a significant advantage over ad-hoc quantization schemes, which  are limited in their applicability to the specific algorithms   they are designed for.
Therefore, an interesting question is whether  the performance superiority comes solely from the underlying (unquantized) algorithm, i.e., NIDS, or also from the quantization rule, i.e., ANQ. To answer this question, we compare the above three quantization rules, DYQ, LPQ, and ANQ on the same distributed algorithm, NIDS \cite{Li2019, Xu2020}.

Fig. \ref{fig:quant:LS} and Fig. \ref{fig:quant:logit} show the MSE versus iterations for ANQ-NIDS, LPQ-NIDS, and DYQ-NIDS solving the smooth linear regression and smooth logistic regression problems \eqref{eq:problem_ls} and \eqref{eq:problem_logit} ($r\equiv0$), respectively.
The parameters for DYQ-NIDS and LPQ-NIDS are selected to closely match the performance of NIDS with machine precision while using the smallest number of bits, whereas  those for ANQ-NIDS are selected according to our  analysis.
Both figures show that ANQ-NIDS consistently requires less bits than the other quantization schemes. 
More precisely, ANQ-NIDS uses 25\% less bits than DYQ-NIDS and 44\% less then LPQ-NIDS in the linear regression case, and 50\% less bits than DYQ-NIDS and 27\% less then LPQ-NIDS in the logistic regression problem.

\subsection{Communication cost}  \label{sec:sim:cc}
We now study the effect of the dimension $d$ on the communication cost for different algorithms solving the non-smooth linear regression problem  \eqref{eq:problem_ls}, with  $\alpha=10^{-4}$. Note that the rate of a machine precision algorithm $\RTUQ$ depends on both the weight matrix and the condition number $\kappa = L/\SC$, which  depends itself on  $d$. We chose the coefficient of the $l_2$ regularizer so as to make  $\kappa$ and thus $\RTUQ$ remain fixed across different $d$. The rest of the settings are the same as in Fig. \ref{fig:LS:nonsmooth}. Fig. \ref{fig:comm_cost_scale}    plots the network communication cost ${\rm C_{cm}}(\varepsilon)$ versus   $d$ as defined in \eqref{eq:sim_metric}, required to reach a target MSE-accuracy $\varepsilon = 10^{-8}$. We observe that ${\rm C_{cm}}$ scales roughly linearly with respect to the dimension for all algorithms, which is consistent with Theorem \ref{thm:comm_cost_mesh}.

\begin{figure}[h]
	\centering
	\includegraphics[width = 0.81\linewidth,trim={30 20 40 40},clip]{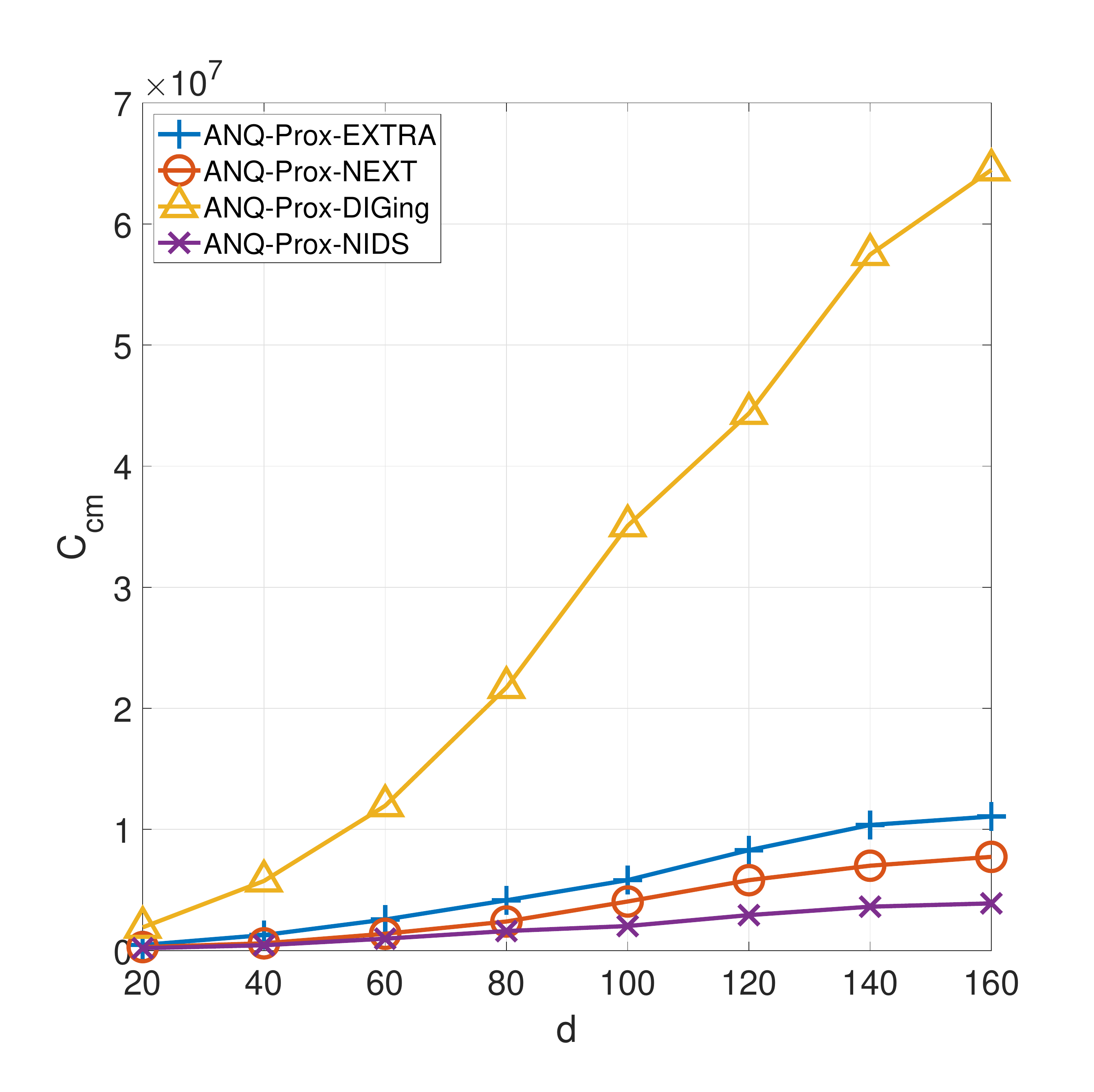}	
	\caption{Network communication cost evaluation on	non-smooth linear regression problem with $\alpha = 10^{-4}$ versus $d$, where $\RTUQ$ remains fixed for all $d$ in each algorithm.}
 \label{fig:comm_cost_scale}
\end{figure}

Finally, we investigate numerically the effect of $\RTQ$ and $\QNSC$ on the network communication cost as defined in  \eqref{eq:sim_metric}, for a target MSE-accuracy $\varepsilon = 10^{-14}$. We consider the ANQ-NIDS algorithm with $\eta^0 = 0.001$, solving the smooth linear regression problem  \eqref{eq:problem_ls}, with $\alpha = 0$. Fig. \ref{fig:commcost_sigma} plots the network communication cost versus $\RTQ$ with $\QNSC = \bar{\omega}/2$. Note that this figure justifies the discussion in \secref{sec:commcost} that $\RTQ$ should be chosen away from $\RTUQ$ and 1 in order to save on communication cost.
Fig. \ref{fig:commcost_sigma} plots the communication cost \eqref{eq:sim_metric} versus $\QNSC$ with $\RTQ = 0.99 \times \RTUQ + 0.01$.
It can be seen that, by optimizing the compression rate $\QNSC$, a saving of  15\% in communication cost can be obtained over a quantization scheme that employs no compression ($\QNSC=0$).
This observation numerically supports our  BC-rule, which generalizes the deterministic/probabilistic quantizers that have no compression term. 

\begin{figure*}
     \centering
     \hfill
     \begin{subfigure}[b]{0.45\textwidth}
        \includegraphics[width = .9\linewidth,trim={0 60 50 70},clip]{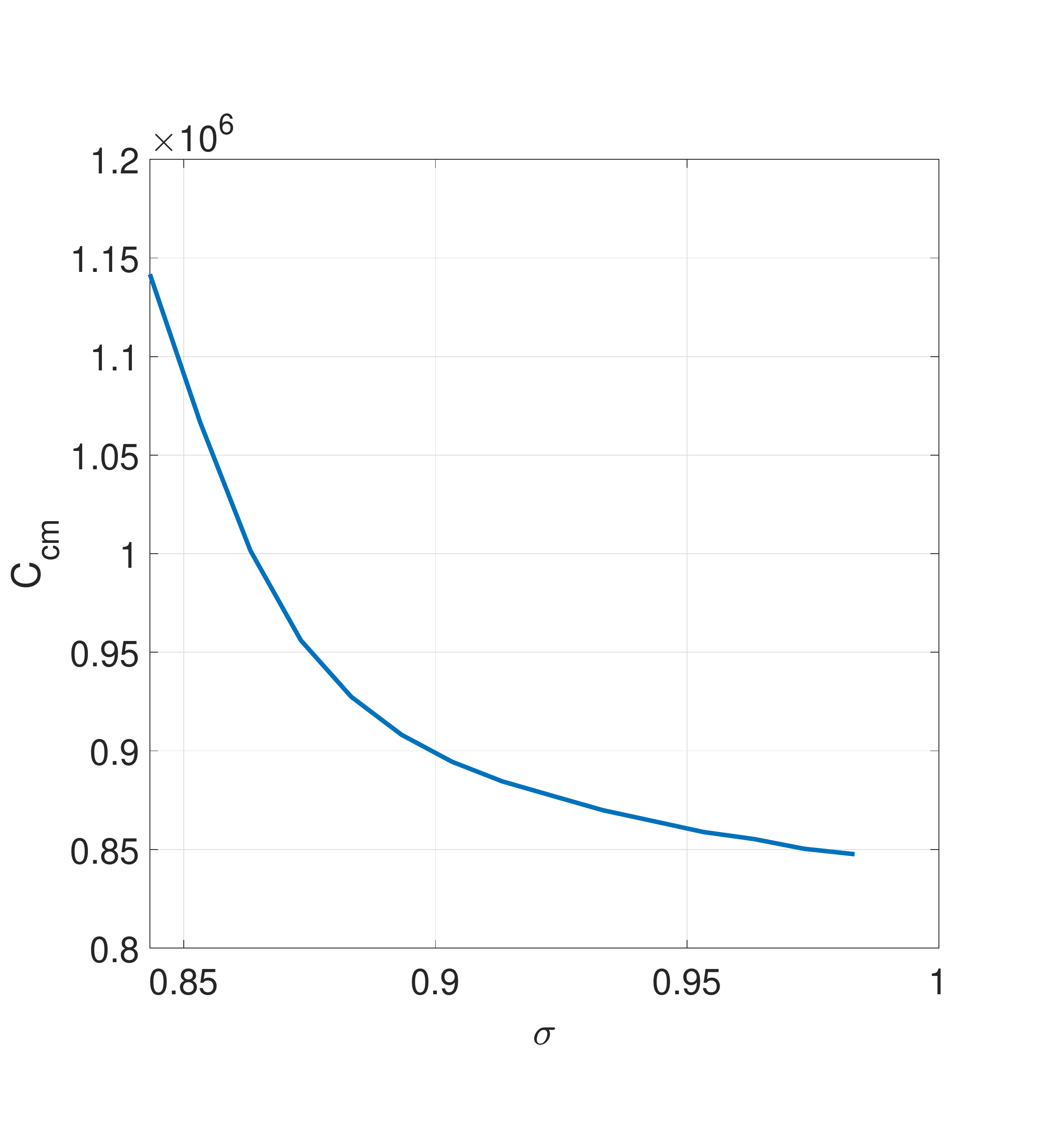}
	    \caption{Effect of $\RTQ\in(\lambda,1)$ with $\QNSC = \bar{\QNSC}/2$} \label{fig:commcost_sigma}
     \end{subfigure}
     \hfill
     \begin{subfigure}[b]{0.45\textwidth}
        \includegraphics[width = .9\linewidth,trim={0 60 50 70},clip]{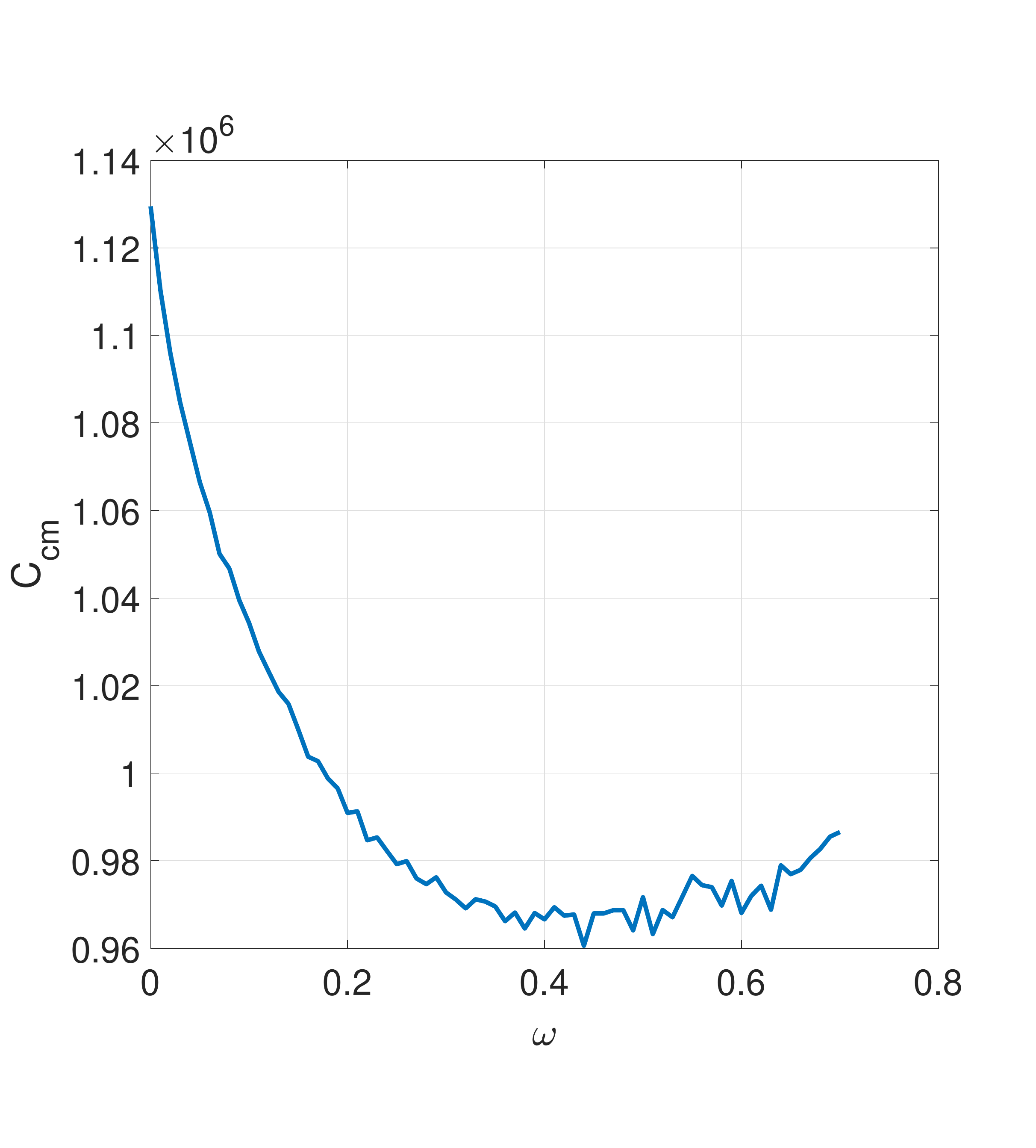}
	    \caption{Effect of $\QNSC$ with $\RTQ = 0.99 \times \RTUQ + 0.01$} \label{fig:commcost_omega}
     \end{subfigure}
     \hfill
     \caption{Network communication cost evaluation of ANQ-NIDS on smooth linear regression problem.}
\end{figure*}

\section{Conclusions} \label{Sec:conclusion}
In this paper, we propose a black-box model and unified convergence analysis for a general class of linearly convergent algorithms subject to quantized communications. The new black-box model encompasses 
composite optimization problems and distributed algorithms using historical information (e.g., EXTRA \cite{Shi2015EXTRA} and NEXT \cite{Lorenzo2016}), thus generalizing existing algorithmic frameworks, which are not applicable to these settings. 
To enable quantization  (below machine precision), we propose a novel \emph{biased compression (BC-)rule} that preserves linear convergence of distributed algorithms while using  a \textit{finite}  number of bits in each communication. As special instance of the BC-rule, we also proposed a  new random or deterministic quantizer, the ANQ,  {coupled with a communication-efficient encoding scheme}. We analyzed the communication cost of a gamut of distributed algorithms equipped with the  ANQ (in a unified fashion), showing favorable performance analytically and numerically, both in terms of convergence rate and communication cost, with respect to state-of-the-art quantization rules (including uniform and compression-based ones) and ad-hoc distributed algorithms.  

\bibliographystyle{IEEEtran}
\bibliography{IEEEabrv,ref_Q_opt}	

\appendices

\section{
 Linear Converge under quantization}
\label{pf:thm:conv}
\def\thesubsectiondis{\arabic{subsection}.} 
In this appendix, we prove   Theorem \ref{thm:conv}. We begin by introducing some   preliminary results, whose proofs are deferred to Appendix \ref{app:pf_conv_aux}. Throughout this section, we make the blanket assumption that the  conditions    in Theorem~\ref{thm:conv}  are satisfied. In particular,  $\RTQ\in (\RTUQ,1)$ and $\QNSC\in[0,\bar{\QNSC}(\RTQ))$, with 
$\bar{\QNSC}(\RTQ)$ defined in \eqref{omega_bar}. Due to the possibly random nature of the quantizer,  $\{{\bf z}^k, {\bf c}^{k,s}, \hat{\bf c}^{k,s}\}_{k \geq 0, s \in [R]}$ is a stochastic process defined on a proper probability space; we denote by $\mathcal F^{k,s}$   the $\sigma$-algebra generated by
$\{{\bf z}^{k'}, {\bf c}^{k',s'}, \hat{\bf c}^{k',s'}\}_{k'< k, s' \in [\q]} \cup \{{\bf z}^{k}, {\bf c}^{k,s'}, \hat{\bf c}^{k,s'-1}\}_{s' \leq s}$ ($\hat{\bf c}^{k,s}$ excluded).

\subsection{Preliminaries} The idea of the proof is to show by induction that both the optimization error $\Vert{\bf z}^{k} - {\bf z}^\infty\Vert$ and the input to the quantizer, $\Vert{\bf c}^{k, \ITERR} - \hat{\bf c}^{k-1, \ITERR}\Vert_2$, are   linearly convergent (in expectation)  at rate $\RTQ$, i.e.,
\begin{align}
\sqrt{\mathbb E[\big\Vert {\bf z}^{k} - {\bf z}^\infty \big\Vert^2]}  &\leq {\LIAP}_0 \cdot (\RTQ)^k,  \label{error_induction} \\
\sqrt{\mathbb E[\big\Vert \mathcal {\bf c}^{k, \ITERR} - \hat{\bf c}^{k-1, \ITERR}\big\Vert_2^2] }
&\leq F^{\ITERR}\cdot(\RTQ)^k, \quad \forall \ITERR \in [\q] \cup \{0\}, \label{qin_dist_induction}
\end{align}
where $F^0=0$, and $V_0$,  $F^{\ITERR}$, $s\in [R]$ 
satisfy
\begin{align}
&{\LIAP}_0 \geq 
\max\Big\{\Vert {\bf z}^{0} - {\bf z}^\infty\Vert,
\frac{
\sqrt{\m d}R\QNSB^0 +\QNSC\mathbf F^\top{\bf 1}
 }{\RTQ-\RTUQ}\tilde L_A\Big\}, \label{V_tilde_cond} \\
&F^{\ITERR}
  \geq
\max\Big\{L_{Z} c^* + L_C\sqrt{\m d}\QNSB^0+L_C(1+\QNSC) F^{\ITERR-1}, \label{Fm_cond}
\\&
\quad  \frac{\sqrt{\m d}\QNSB^0(1+L_C \RTQ )
  +L_C \RTQ (1+\QNSC) F^{\ITERR-1} + L_{Z}(1+\RTQ)  {\LIAP}_0
}{\RTQ -\QNSC}\Big\},  \nonumber
\end{align}
$\forall \ITERR \in [\q],$ and we have defined ${\bf F} \triangleq (F^s)_{s\in[\q]}$,
\begin{align}
c^* \triangleq \frac{1}{L_Z} \max_{s=[\q]}\Vert \mathcal C^\ITERR({\bf z}^0, {\bf 0})\Vert_2,\ \ 
\tilde L_A \triangleq L_A\sum_{s=0}^{\q-1}(L_C)^{s}.
 \label{eq:cstar}
\end{align}

The existence of such $V_0$ and $F^{\ITERR}$, $s\in [R]$,  is proved in the following lemma.
\begin{lemma} \label{lemma:V_F}
Let  $\QNSC\in[0,\bar{\QNSC}(\RTQ))$.
Then, \eqref{V_tilde_cond} and \eqref{Fm_cond} are satisfied by
\begin{align}\nonumber
&{\LIAP}_0=\max\Big\{c^*, \Vert {\bf z}^{0} - {\bf z}^\infty\Vert\Big\}\\&
\qquad+\frac{L_A \psi\sqrt{md}\q^2\QNSB^0 }{\RTQ-\RTUQ}\cdot
\frac{
1+\frac{\q\QNSC}{\RTQ}[(1+L_C\RTQ )\psi-1]
}{1-\QNSC/\bar\QNSC}
, \label{eq:V0}
\\
\nonumber&F^{\ITERR}
=
\frac{\sqrt{md}\QNSB^0 (1+L_C \RTQ)+2 L_{Z}{\LIAP}_0(\RTQ, \QNSC, \QNSB^0) }{\RTQ-\QNSC}\\&\qquad
\times
\sum_{\ITERRT=0}^{\ITERR-1}\Big(\frac{2  L_C}{1 -\QNSC/\RTQ}\Big)^{\ITERRT}
  ,\quad  \ITERR \in [\q], \label{Fm}
\end{align}
where $\psi\triangleq\max\{1,(2L_C)^{\q-1}\}$.
\end{lemma}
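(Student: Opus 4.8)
The plan is to break the circular dependency between $\LIAP_0$ and the vector $\mathbf F=(F^{\ITERR})_{\ITERR\in[\q]}$. Condition \eqref{V_tilde_cond} bounds $\LIAP_0$ from below in terms of $\mathbf F^\top\mathbf 1=\sum_{\ITERR=1}^{\q}F^{\ITERR}$, while each inequality in \eqref{Fm_cond} bounds $F^{\ITERR}$ in terms of $\LIAP_0$ and $F^{\ITERR-1}$. I would first solve the $F$-recursion with $\LIAP_0$ held as a free parameter, obtaining a closed form affine in $\LIAP_0$, and then substitute it into \eqref{V_tilde_cond} to reduce the problem to a single self-referential inequality in $\LIAP_0$ that I can solve explicitly.

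First I would treat \eqref{Fm_cond} as a recursion. Both lower bounds inside the max are affine in $F^{\ITERR-1}$ and in $\LIAP_0$; the binding one is the second (with denominator $\RTQ-\QNSC$), which has the form $F^{\ITERR}=a+bF^{\ITERR-1}$ with $F^0=0$, where $a=\tfrac{\sqrt{\m d}\QNSB^0(1+L_C\RTQ)+2L_{Z}\LIAP_0}{\RTQ-\QNSC}$ and $b=\tfrac{2L_C\RTQ}{\RTQ-\QNSC}=\tfrac{2L_C}{1-\QNSC/\RTQ}$. Unrolling gives exactly the geometric-sum closed form $F^{\ITERR}=a\sum_{\ITERRT=0}^{\ITERR-1}b^{\ITERRT}$ of \eqref{Fm}. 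I would then verify that this choice dominates both terms of the max. For the second bound the difference $F^{\ITERR}-\text{RHS}$ collapses to the manifestly nonnegative quantity $\tfrac{1}{\RTQ-\QNSC}\big[L_{Z}\LIAP_0(1-\RTQ)+L_C\RTQ F^{\ITERR-1}(1-\QNSC)\big]$, using $\RTQ<1$ and $\QNSC<1$. For the first bound one uses $b\ge 2L_C\ge L_C(1+\QNSC)$ to handle the $F^{\ITERR-1}$-term, and $\LIAP_0\ge c^*$ together with the size of the additive constant in $a$ (and Assumption~\ref{assump:commcost} relating $c^*$ to $\Vert\mathcal C^{\ITERR}({\bf z}^0,{\bf 0})\Vert_2$) to absorb $L_{Z}c^*+L_C\sqrt{\m d}\QNSB^0$.

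Next I would insert the closed form into \eqref{V_tilde_cond}. Since each $F^{\ITERR}$ is affine in $\LIAP_0$, so is $\mathbf F^\top\mathbf 1=A+B\,\LIAP_0$ with $B=\tfrac{2L_{Z}}{\RTQ-\QNSC}\sum_{\ITERR=1}^{\q}\sum_{\ITERRT=0}^{\ITERR-1}b^{\ITERRT}$. Substituting, \eqref{V_tilde_cond} becomes a fixed-point inequality $\LIAP_0\ge \tfrac{\tilde L_A}{\RTQ-\RTUQ}\big(\sqrt{\m d}\q\QNSB^0+\QNSC A\big)+\tfrac{\tilde L_A\,\QNSC\,B}{\RTQ-\RTUQ}\,\LIAP_0$, which admits a finite solution precisely when the self-referential coefficient $\tfrac{\tilde L_A\QNSC B}{\RTQ-\RTUQ}$ is strictly below $1$. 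This is exactly what the hypothesis $\QNSC<\bar\QNSC(\RTQ)$ buys: the definition \eqref{omega_bar}, through $\tilde L_A=L_A\sum_{\ITERRT=0}^{\q-1}(L_C)^{\ITERRT}$ and the factor $\psi=\max\{1,(2L_C)^{\q-1}\}$, is calibrated so that $1-\QNSC/\bar\QNSC(\RTQ)>0$ controls this coefficient. Solving the inequality, taking the maximum with $\Vert{\bf z}^0-{\bf z}^\infty\Vert$ and $c^*$ to respect the $F$-side requirements, and collecting constants yields \eqref{eq:V0}, whose denominator $1-\QNSC/\bar\QNSC$ is the image of this contraction gap.

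The main obstacle will be the bookkeeping in the last step: the double geometric sum $\sum_{\ITERR=1}^{\q}\sum_{\ITERRT=0}^{\ITERR-1}b^{\ITERRT}$ has ratio $b$ that itself depends on $\QNSC$, so I must bound it uniformly over $\QNSC\in[0,\bar\QNSC(\RTQ))$ by the clean factor $\psi^2\q^2$ appearing in both \eqref{eq:V0} and \eqref{omega_bar}. The bound must be crude enough to keep the closed form manageable, yet tight enough that the self-referential coefficient provably stays below $1$ throughout the admissible range of $\QNSC$; reconciling these two is where the precise form of $\bar\QNSC(\RTQ)$ is forced. Everything else—the recursion solve and the two max-dominance checks—is routine once the affine structure in $\LIAP_0$ is exposed.
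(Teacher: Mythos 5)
Your proposal is correct and follows essentially the same route as the paper's proof: replace the max in \eqref{Fm_cond} by its second branch (dominant after bounding $1+\QNSC\le 2$ and $1+\RTQ\le 2$, and after enforcing $V_0\ge c^*$), unroll the resulting affine recursion to obtain \eqref{Fm}, exploit the affine dependence of $\mathbf F^\top\mathbf 1$ on $V_0$ to turn \eqref{V_tilde_cond} into a self-referential inequality, and solve it using the fact that $\QNSC<\bar\QNSC(\RTQ)$ keeps the self-referential coefficient strictly below one. The only cosmetic slip is the appeal to Assumption~\ref{assump:commcost}, which plays no role here—$c^*$ is a defined constant and is handled simply by including it in the max defining $V_0$—but this does not affect the argument.
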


Since the effect of $\Vert{\bf c}^{k, \ITERR} - \hat{\bf c}^{k-1, \ITERR}\Vert_2$ on $\Vert{\bf z}^{k} - {\bf z}^\infty\Vert$ is through quantization, we need the following bound on the quantization error (its proof follows readily by the Cauchy–Schwarz inequality).
\begin{lemma} \label{lemma:BCR_net}
Let the quantizer $\mathcal Q_i$ used by agent $i\in [m]$  satisfy the BC-rule  \eqref{eq:BCR} with bias $\QNSB\geq 0$ and compression rate $\QNSC\in [0,\bar{\QNSC}(\sigma))$.   Then the following holds for the stack $\mathcal Q\triangleq [\mathcal Q_1, \ldots, Q_m]^\top$:
\begin{align}
\sqrt{\mathbb E\big[\big\Vert \mathcal Q({\bf x}) {-}  {\bf x} \big\Vert_2^2\big|\mathbf x\big]}
\leq
\sqrt{\m d}\,\QNSB  +\QNSC\Vert  {\bf x}\Vert_2,
 \label{eq:BCR_net}
\end{align}
$ \forall  {\bf x}=[{\bf x}_1^\top,\ldots, {\bf x}_m^\top]^\top \in \mathbb R^{md}.$
\end{lemma}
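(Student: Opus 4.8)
The plan is to exploit the block-separable structure of the stacked quantizer. Since $\mathcal Q\triangleq[\mathcal Q_1,\ldots,\mathcal Q_m]^\top$ acts blockwise, with each $\mathcal Q_i$ operating only on the subvector $\mathbf x_i\in\mathbb R^d$, the squared error decomposes exactly as $\Vert\mathcal Q(\mathbf x)-\mathbf x\Vert_2^2=\sum_{i=1}^{\m}\Vert\mathcal Q_i(\mathbf x_i)-\mathbf x_i\Vert_2^2$. First I would take the conditional expectation given $\mathbf x$ and invoke linearity of expectation; note that \emph{no} independence of the quantizers across agents is needed here, because only a \emph{sum} (not a product) of per-agent errors appears. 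Moreover, the randomness of $\mathcal Q_i$ depends on $\mathbf x_i$ alone, so $\mathbb E[\Vert\mathcal Q_i(\mathbf x_i)-\mathbf x_i\Vert_2^2\,|\,\mathbf x]=\mathbb E[\Vert\mathcal Q_i(\mathbf x_i)-\mathbf x_i\Vert_2^2\,|\,\mathbf x_i]$, reducing the global bound to a sum of the per-agent ones.

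Next I would apply the per-agent BC-rule \eqref{eq:BCR} to each summand, giving $\mathbb E[\Vert\mathcal Q_i(\mathbf x_i)-\mathbf x_i\Vert_2^2\,|\,\mathbf x_i]\leq(\sqrt{d}\,\QNSB+\QNSC\Vert\mathbf x_i\Vert_2)^2$. Summing over $i$ and expanding the square produces three contributions: a constant term $\m d\,\QNSB^2$, a quadratic term $\QNSC^2\sum_i\Vert\mathbf x_i\Vert_2^2=\QNSC^2\Vert\mathbf x\Vert_2^2$ (which reassembles the full-vector norm automatically), and a cross term $2\sqrt{d}\,\QNSB\QNSC\sum_i\Vert\mathbf x_i\Vert_2$. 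Comparing with the target bound $(\sqrt{\m d}\,\QNSB+\QNSC\Vert\mathbf x\Vert_2)^2=\m d\,\QNSB^2+2\sqrt{\m d}\,\QNSB\QNSC\Vert\mathbf x\Vert_2+\QNSC^2\Vert\mathbf x\Vert_2^2$, the constant and quadratic terms already match identically, so the entire argument hinges on controlling the cross term.

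The cross term is exactly where the Cauchy--Schwarz inequality enters: writing $\sum_i\Vert\mathbf x_i\Vert_2=\sum_i 1\cdot\Vert\mathbf x_i\Vert_2\leq\sqrt{\m}\,\big(\sum_i\Vert\mathbf x_i\Vert_2^2\big)^{1/2}=\sqrt{\m}\,\Vert\mathbf x\Vert_2$, one gets $2\sqrt{d}\,\QNSB\QNSC\sum_i\Vert\mathbf x_i\Vert_2\leq 2\sqrt{\m d}\,\QNSB\QNSC\Vert\mathbf x\Vert_2$. Substituting this collapses the upper bound precisely into $(\sqrt{\m d}\,\QNSB+\QNSC\Vert\mathbf x\Vert_2)^2$, and taking square roots yields \eqref{eq:BCR_net}. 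Honestly, I do not expect a genuine obstacle here: the only thing to ``discover'' is that Cauchy--Schwarz on the $\ell_1$-to-$\ell_2$ passage for the cross term is what closes the gap (equivalently, the whole estimate is Minkowski's inequality applied to the two vectors $(\sqrt{d}\,\QNSB)_{i}$ and $(\QNSC\Vert\mathbf x_i\Vert_2)_{i}$). The only point deserving a word of care is the measurability/conditioning step, which is routine given that each $\mathcal Q_i$ is driven by its own $\mathbf x_i$.
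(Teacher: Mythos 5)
Your proof is correct and is exactly the argument the paper has in mind: the paper states only that the lemma ``follows readily by the Cauchy--Schwarz inequality,'' and your blockwise decomposition, per-agent application of the BC-rule, and Cauchy--Schwarz (equivalently Minkowski) bound on the cross term $\sum_i\Vert\mathbf x_i\Vert_2\leq\sqrt{\m}\,\Vert\mathbf x\Vert_2$ is precisely that routine argument made explicit. No gaps.
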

A direct application of Lemma \ref{lemma:BCR_net} leads to the following bound on the quantizer's input, which we use recurrently in the proofs: 
\begin{align}
\nonumber
&\sqrt{\mathbb E\big[\big\Vert
\hat{\bf c}^{k, \ITERR}-{\bf c}^{k, \ITERR}
\big\Vert_2^2 | \mathcal F^{k,s}\big]}
\\&\nonumber
\stackrel{\eqref{Qupdated}}{=} 
\sqrt{\mathbb E\big[\big\Vert
\mathcal Q^{k}({\bf c}^{k, \ITERR} - \hat{\bf c}^{k-1, \ITERR})
-({\bf c}^{k, \ITERR}-\hat{\bf c}^{k-1, \ITERR})
\big\Vert_2^2| \mathcal F^{k,s}\big]}
\\&
\stackrel{\eqref{eq:BCR_net}}{\leq}
\sqrt{\m d}\,\QNSB^0 \cdot (\RTQ)^k
+\QNSC\big\Vert{\bf c}^{k, \ITERR} - \hat{\bf c}^{k-1, \ITERR}\big\Vert_2,\quad a.s.,
\label{Qerror}
\end{align}
for all $\ITERR\in[\q]$ and $k=0,1,\ldots $, where we used $\QNSB^k=\QNSB^0 \cdot (\RTQ)^k$.

The following lemma bounds the distortion introduced by quantization in one iteration of (\ref{Qupdated}).
\begin{lemma} \label{lemma:agg_Lipt}
There holds: for all $k=0,1,\ldots, $
\begin{align*}
&\sqrt{\mathbb E[\Vert{\bf z}^{k+1} - {\bf z}^\infty\Vert^2]}
\leq
\lambda \sqrt{\mathbb E[\big\Vert{\bf z}^{k}-{\bf z}^\infty\big\Vert^2]}
\\&
\quad{+}\tilde L_A\sqrt{\m d}\q\QNSB^{0}\cdot (\RTQ)^k
{+}\tilde L_A\QNSC\sum_{\ITERR=1}^{\q }\sqrt{\mathbb E[
\Vert{\bf c}^{k, \ITERR} - \hat{\bf c}^{k-1, \ITERR}\Vert_2^2]},
\end{align*}
$a.s.,$ where $\tilde L_A$ is defined in \eqref{eq:cstar}.
\end{lemma}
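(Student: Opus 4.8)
The plan is to compare one quantized iteration against the ideal (unquantized) map $\tilde{\mathcal A}$ and to control the discrepancy through the Lipschitz assumptions and the per-round quantization bound \eqref{Qerror}. Introduce the ideal intermediate signals generated from ${\bf z}^k$, namely $\bar{\bf c}^{k,0}={\bf 0}$ and $\bar{\bf c}^{k,\ITERR}=\mathcal C^{\ITERR}({\bf z}^k,\bar{\bf c}^{k,\ITERR-1})$ for $\ITERR\in[\q]$, so that $\tilde{\mathcal A}({\bf z}^k)=\mathcal A({\bf z}^k,\bar{\bf c}^{k,1},\dots,\bar{\bf c}^{k,\q})$. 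First I would split, pathwise, $\Vert{\bf z}^{k+1}-{\bf z}^\infty\Vert\leq\Vert\tilde{\mathcal A}({\bf z}^k)-{\bf z}^\infty\Vert+\Vert{\bf z}^{k+1}-\tilde{\mathcal A}({\bf z}^k)\Vert$, bounding the first term by $\RTUQ\Vert{\bf z}^k-{\bf z}^\infty\Vert$ via the pseudo-contractivity of Assumption~\ref{assump:R_conv_z}. The remaining term is the genuine quantization distortion: since ${\bf z}^{k+1}=\mathcal A({\bf z}^k,\hat{\bf c}^{k,1},\dots,\hat{\bf c}^{k,\q})$ and $\tilde{\mathcal A}({\bf z}^k)$ differ only in the $\hat{\bf c}$-arguments, a telescoping argument that swaps one argument at a time, combined with Assumption~\ref{assump:Lipt_A}, yields $\Vert{\bf z}^{k+1}-\tilde{\mathcal A}({\bf z}^k)\Vert\leq L_A\sum_{\ITERR=1}^{\q}\Vert\hat{\bf c}^{k,\ITERR}-\bar{\bf c}^{k,\ITERR}\Vert_2$ (all pathwise).

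Next I would control $e^{\ITERR}\triangleq\Vert\hat{\bf c}^{k,\ITERR}-\bar{\bf c}^{k,\ITERR}\Vert_2$ by a recursion over communication rounds. Writing $\hat{\bf c}^{k,\ITERR}-\bar{\bf c}^{k,\ITERR}=(\hat{\bf c}^{k,\ITERR}-{\bf c}^{k,\ITERR})+({\bf c}^{k,\ITERR}-\bar{\bf c}^{k,\ITERR})$, the first parenthesis is the direct quantization error $q^{\ITERR}\triangleq\Vert\hat{\bf c}^{k,\ITERR}-{\bf c}^{k,\ITERR}\Vert_2$, while the second, since ${\bf c}^{k,\ITERR}=\mathcal C^{\ITERR}({\bf z}^k,\hat{\bf c}^{k,\ITERR-1})$ and $\bar{\bf c}^{k,\ITERR}=\mathcal C^{\ITERR}({\bf z}^k,\bar{\bf c}^{k,\ITERR-1})$, is bounded by $L_C\,e^{\ITERR-1}$ using the $\mathbf c$-Lipschitz bound \eqref{eq:Lipt_C} of Assumption~\ref{assump:Lipt_C}. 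With $e^0=0$ (both signals initialize at ${\bf 0}$), this gives $e^{\ITERR}\leq q^{\ITERR}+L_C e^{\ITERR-1}$, which unrolls to $e^{\ITERR}\leq\sum_{\ITERRT=1}^{\ITERR}(L_C)^{\ITERR-\ITERRT}q^{\ITERRT}$. Summing over $\ITERR$ and exchanging the order of summation, $\sum_{\ITERR=1}^{\q}e^{\ITERR}\leq\sum_{\ITERRT=1}^{\q}q^{\ITERRT}\sum_{j=0}^{\q-\ITERRT}(L_C)^{j}\leq\big(\sum_{j=0}^{\q-1}(L_C)^j\big)\sum_{\ITERRT=1}^{\q}q^{\ITERRT}$, so that $L_A\sum_{\ITERR=1}^{\q}e^{\ITERR}\leq\tilde L_A\sum_{\ITERRT=1}^{\q}q^{\ITERRT}$ with $\tilde L_A$ as in \eqref{eq:cstar}. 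Collecting these pathwise bounds gives $\Vert{\bf z}^{k+1}-{\bf z}^\infty\Vert\leq\RTUQ\Vert{\bf z}^k-{\bf z}^\infty\Vert+\tilde L_A\sum_{\ITERR=1}^{\q}\Vert\hat{\bf c}^{k,\ITERR}-{\bf c}^{k,\ITERR}\Vert_2$.

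Finally I would pass to the $L^2$ (root-mean-square) quantities. Taking the $L^2$ norm of the last pathwise inequality and applying Minkowski's inequality term by term yields $\sqrt{\mathbb E[\Vert{\bf z}^{k+1}-{\bf z}^\infty\Vert^2]}\leq\RTUQ\sqrt{\mathbb E[\Vert{\bf z}^k-{\bf z}^\infty\Vert^2]}+\tilde L_A\sum_{\ITERR=1}^{\q}\sqrt{\mathbb E[\Vert\hat{\bf c}^{k,\ITERR}-{\bf c}^{k,\ITERR}\Vert_2^2]}$. For each round I would then invoke the conditional quantization bound \eqref{Qerror}: squaring it, taking expectations, and using the tower property over $\mathcal F^{k,\ITERR}$ (noting that ${\bf c}^{k,\ITERR}$ and $\hat{\bf c}^{k-1,\ITERR}$ are $\mathcal F^{k,\ITERR}$-measurable), then applying Minkowski once more to separate the deterministic bias term from the random input term, gives $\sqrt{\mathbb E[\Vert\hat{\bf c}^{k,\ITERR}-{\bf c}^{k,\ITERR}\Vert_2^2]}\leq\sqrt{\m d}\,\QNSB^0(\RTQ)^k+\QNSC\sqrt{\mathbb E[\Vert{\bf c}^{k,\ITERR}-\hat{\bf c}^{k-1,\ITERR}\Vert_2^2]}$. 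Substituting this into the previous display and summing the constant bias contribution to $\tilde L_A\sqrt{\m d}\,\q\,\QNSB^0(\RTQ)^k$ produces exactly the claimed inequality. The main obstacle is the careful bookkeeping at the measure-theoretic step: the quantization bound is only a \emph{conditional} $L^2$ statement per round, and the rounds are statistically coupled through the feedback recursion, so the argument must route through the nested filtration $\mathcal F^{k,\ITERR}$ and the tower property rather than treating the $q^{\ITERR}$ as independent or pathwise-bounded; all deterministic manipulations (telescoping, the geometric recursion, the double-sum swap) are routine once the pathwise skeleton is fixed.
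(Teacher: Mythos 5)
Your proposal is correct and follows essentially the same route as the paper: a telescoping/argument-swapping decomposition against the unquantized map $\tilde{\mathcal A}({\bf z}^k)$, propagation of the per-round quantization errors through $\mathcal C^{\ITERR}$ via powers of $L_C$ (yielding the same double sum and hence the constant $\tilde L_A$), and finally the conditional bound \eqref{Qerror} combined with Lemma~\ref{lemma:rvs} and the filtration $\mathcal F^{k,\ITERR}$. The only difference is organizational—you swap the arguments of $\mathcal A$ all at once and then run an error recursion over rounds, whereas the paper interpolates through hybrid trajectories $\zeta^{\ITERR}$—but the two bookkeepings produce identical bounds.
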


We conclude this section of preliminaries with the following useful result.
\begin{lemma} \label{lemma:rvs}
Let $\{X_t:t\in[T]\}\subset\mathbb R$ be a collection of random variables. Then,
$$
\sqrt{\mathbb E\bigg[\Big(\sum_{t=1}^T X_t\Big)^2\bigg]}
\leq
\sum_{t=1}^T\sqrt{\mathbb E[X_t^2]}.
$$
\end{lemma}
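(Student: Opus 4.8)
The plan is to recognize this statement as the triangle inequality (Minkowski's inequality) for the $L^2$-norm on the space of square-integrable random variables, specialized to a finite sum. Writing $\Vert X\Vert \triangleq \sqrt{\mathbb E[X^2]}$, the claim is precisely $\Vert \sum_{t=1}^T X_t\Vert \leq \sum_{t=1}^T \Vert X_t\Vert$; I would establish it by first proving the two-variable case and then inducting on $T$. Throughout, I implicitly assume each $X_t$ is square-integrable (so that every expectation below is finite); if some $\mathbb E[X_t^2]=\infty$ the right-hand side is $+\infty$ and the inequality holds trivially.

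For the base case $T=2$, I would expand the square and bound the cross term by the Cauchy--Schwarz inequality. Concretely,
\begin{align*}
\mathbb E[(X_1+X_2)^2]
&= \mathbb E[X_1^2] + 2\,\mathbb E[X_1 X_2] + \mathbb E[X_2^2] \\
&\leq \mathbb E[X_1^2] + 2\sqrt{\mathbb E[X_1^2]}\sqrt{\mathbb E[X_2^2]} + \mathbb E[X_2^2] \\
&= \Big(\sqrt{\mathbb E[X_1^2]} + \sqrt{\mathbb E[X_2^2]}\Big)^2 .
\end{align*}
Taking nonnegative square roots yields $\Vert X_1 + X_2\Vert \leq \Vert X_1\Vert + \Vert X_2\Vert$, which is the $T=2$ claim.

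For the inductive step, suppose the bound holds for any collection of $T-1$ random variables. Applying the two-variable inequality to the pair $\big(\sum_{t=1}^{T-1} X_t,\; X_T\big)$ gives
\begin{align*}
\sqrt{\mathbb E\Big[\Big(\sum_{t=1}^{T} X_t\Big)^2\Big]}
\leq
\sqrt{\mathbb E\Big[\Big(\sum_{t=1}^{T-1} X_t\Big)^2\Big]}
+ \sqrt{\mathbb E[X_T^2]},
\end{align*}
and then invoking the inductive hypothesis on the first term bounds it by $\sum_{t=1}^{T-1}\sqrt{\mathbb E[X_t^2]}$, completing the induction.

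There is no genuine obstacle here: the result is a routine consequence of Cauchy--Schwarz together with induction. The only point worth flagging is the square-integrability caveat noted above, which ensures all the manipulations involve finite quantities.
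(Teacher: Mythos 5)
Your proof is correct and uses essentially the same argument as the paper: the paper expands the full square $\mathbb E[(\sum_t X_t)^2]=\sum_{t,u}\mathbb E[X_tX_u]$ and bounds every cross term by Cauchy--Schwarz in one step, whereas you obtain the identical bound by proving the $T=2$ case this way and then inducting on $T$. The extra remark on square-integrability is a harmless (and reasonable) addition that the paper leaves implicit.
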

\begin{proof}
It can be proved by developing the square within the expectation on the left hand side  expression, and by using
$\mathbb E[X_tX_u]\leq \sqrt{\mathbb E[X_t^2]}\sqrt{\mathbb E[X_u^2]}$.
\end{proof}

\subsection{Proof of Theorem \ref{thm:conv}}
 We prove     \eqref{error_induction} and \eqref{qin_dist_induction} by induction. 
Let ${\LIAP}_0$ and $F^{\ITERR}$,  $\ITERR\in [\q]$, satisfy \eqref{V_tilde_cond} and \eqref{Fm_cond}. 
 Since $\Vert {\bf z}^{0} - {\bf z}^\infty\Vert\leq {\LIAP}_0$ (see \eqref{V_tilde_cond}) and $\mathcal {\bf c}^{0,0}=\hat{\bf c}^{-1,0}=\mathbf 0$, \eqref{error_induction} holds for $k=0$ and  \eqref{qin_dist_induction} holds trivially for $k=0$ and $\ITERR = 0$. We  now use induction to prove that \eqref{qin_dist_induction} holds for $k=0$ and $\ITERR\in[\q]$. Assume that \eqref{qin_dist_induction} holds for $k=0$ and $\ITERR < \q$. Then, 
it follows that
\begin{align*}
&\big\Vert  {\bf c}^{0,\ITERR+1} - \hat{\bf c}^{-1,\ITERR+1}\big\Vert_2
= \big\Vert  {\bf c}^{0,\ITERR+1}\big\Vert_2
\\
&\stackrel{(a)}{\leq}
 \big\Vert \mathcal C^{\ITERR+1}\big( {\bf z}^{0}, {\bf 0} \big)\big\Vert_2
 + \big\Vert
 \mathcal C^{\ITERR+1}\big( {\bf z}^{0}, \hat{\bf c}^{0,\ITERR} \big)
 -\mathcal C^{\ITERR+1}\big({\bf z}^{0}, {\bf c}^{0,\ITERR}\big)
 \big\Vert_2 \\&\qquad
 +\big\Vert\mathcal C^{\ITERR+1}\big({\bf z}^{0}, {\bf c}^{0,\ITERR} \big)
 -\mathcal C^{\ITERR+1}\big( {\bf z}^{0}, {\bf 0}\big)\big\Vert_2 \\
&\stackrel{\eqref{eq:Lipt_C},\eqref{eq:cstar}}{\leq} 
 L_{Z} c^* 
 +L_C\big\Vert\hat{\bf c}^{0,\ITERR}-{\bf c}^{0,\ITERR}\big\Vert_2
 + L_C\big\Vert {\bf c}^{0,\ITERR} - \hat{\bf c}^{-1,\ITERR}\big\Vert_2,\ a.s.,
\end{align*}
where in (a) we used the triangle inequality and ${\bf c}^{0,\ITERR+1}=\mathcal C^{\ITERR+1}\big({\bf z}^{0}, \hat{\bf c}^{0,\ITERR}\big)$.
Taking the conditional expectation on both sides and using Lemma \ref{lemma:rvs} yield
\begin{align*}
&\sqrt{\mathbb E\big[\big\Vert  {\bf c}^{0,\ITERR+1} - \hat{\bf c}^{-1,\ITERR+1}\big\Vert_2^2 | \mathcal F^{0,s}\big]}
\\&
\leq L_{Z} c^* 
 {+}L_C\sqrt{\mathbb E\big[\big\Vert\hat{\bf c}^{0,\ITERR}-{\bf c}^{0,\ITERR}\big\Vert_2^2| \mathcal F^{0,s}\big]}
 {+} L_C\big\Vert {\bf c}^{0,\ITERR} - \hat{\bf c}^{-1,\ITERR}\big\Vert_2
 \\
 &\stackrel{\eqref{Qerror}}{\leq} L_{Z} c^* 
+L_C\sqrt{\m d} \QNSB^0
+L_C(1+\QNSC)
\big\Vert {\bf c}^{0,\ITERR} - \hat{\bf c}^{-1,\ITERR}\big\Vert_2,\quad a.s..
\end{align*}
Taking the unconditional expectation on both sides
and using again Lemma \ref{lemma:rvs} yield
\begin{align*}
&\sqrt{\mathbb E\big[\big\Vert  {\bf c}^{0,\ITERR+1} - \hat{\bf c}^{-1,\ITERR+1}\big\Vert_2^2\big]}
\\&\leq 
L_{Z} c^* 
+L_C\sqrt{\m d} \QNSB^0
+L_C(1+\QNSC)
\sqrt{\mathbb E[\big\Vert {\bf c}^{0,\ITERR} - \hat{\bf c}^{-1,\ITERR}\big\Vert_2^2]}
\\
&\stackrel{\eqref{qin_dist_induction}}{\leq} L_{Z} c^* + L_C\sqrt{\m d} \QNSB^0
+L_C(1+\QNSC) F^{\ITERR}
\stackrel{\eqref{Fm_cond}}{\leq} F^{\ITERR+1},
\end{align*}
which completes the induction proof of \eqref{qin_dist_induction} for $k=0$ and $\ITERR\in [\q]$.

 Now, let us assume that \eqref{error_induction} and \eqref{qin_dist_induction} hold for a generic $k=0.1,\ldots$;   we prove  that they hold at $k+1$.
We begin with \eqref{error_induction}. Invoking Lemmas \ref{lemma:agg_Lipt}, \ref{lemma:rvs}, and using the induction hypotheses
\eqref{error_induction} and \eqref{qin_dist_induction} at $k$, yield
\begin{align*}
&\sqrt{\mathbb E[\Vert {\bf z}^{k+1} - {\bf z}^\infty\Vert^2]}   
\leq \RTUQ {\LIAP}_0\cdot(\RTQ)^k
\\&
 + \tilde L_A\big(\sqrt{md} R\QNSB^0 +\QNSC \mathbf  F^\top{\bf 1}\big)  \cdot (\RTQ)^k
 \leq {\LIAP}_0 \cdot (\RTQ)^{k+1},
\end{align*}
where the last inequality  follows from the definition of $V_0$ in \eqref{V_tilde_cond},
which concludes   the induction argument for \eqref{error_induction}.

We now prove that \eqref{qin_dist_induction} holds for $k+1$, by induction over $\ITERR\in [R]$. First, note that  \eqref{qin_dist_induction} holds trivially for $k+1$ and $\ITERR=0$, since 
$\mathcal {\bf c}^{k+1,0}=\hat{\bf c}^{k,0}=\mathbf 0$.
Now, assume that \eqref{qin_dist_induction} holds   at iteration $k+1$  for $\ITERR < \q$.
Then,
\begin{align*}
&\big\Vert {\bf c}^{k+1,\ITERR+1} - \hat{\bf c}^{k, \ITERR+1} \big\Vert_2
\\&
\stackrel{\eqref{Qupdated}}{=}\big\Vert 
\mathcal C^{\ITERR+1}\big(\mathbf z^{k+1}, \hat{\bf c}^{k+1, \ITERR}\big) 
-\mathcal C^{\ITERR+1}\big(\mathbf z^{k+1}, {\bf c}^{k+1, \ITERR}\big) \\&
\quad{+}\mathcal C^{\ITERR+1}\big(\mathbf z^{k+1}, {\bf c}^{k+1, \ITERR}\big) 
{-}\mathcal C^{\ITERR+1}\big(\mathbf z^{k}, \hat{\bf c}^{k, \ITERR}\big){+}
{\bf c}^{k, \ITERR+1}{-}\hat{\bf c}^{k, \ITERR+1}
\big\Vert_2
 \nonumber \\
&\stackrel{(a)}{\leq} \big\Vert \mathcal C^{\ITERR+1}\big(\mathbf z^{k+1},\hat{\bf c}^{k+1, \ITERR}\big) - \mathcal C^{\ITERR+1}\big(\mathbf z^{k+1},{\bf c}^{k+1,\ITERR}\big) \big\Vert_2 \\&
 \quad+ \big\Vert \mathcal C^{\ITERR+1}\big(\mathbf z^{k+1},{\bf c}^{k+1,\ITERR}\big) - \mathcal C^{\ITERR+1}\big(\mathbf z^{k},\hat{\bf c}^{k, \ITERR}\big) \big\Vert_2
 \\&
 \quad+ \big\Vert
{\bf c}^{k, \ITERR+1}-\hat{\bf c}^{k, \ITERR+1}
\big\Vert_2
\stackrel{\eqref{eq:Lipt_C}, \eqref{eq:Lipt_Z}}{\leq} L_C \big\Vert \hat{\bf c}^{k+1, \ITERR} - {\bf c}^{k+1,\ITERR} \big\Vert_2
\\&
\quad + L_C \big\Vert {\bf c}^{k+1,\ITERR} - \hat{\bf c}^{k, \ITERR} \big\Vert_2
 + L_{Z} \big\Vert {\bf z}^{k+1} - {\bf z}^{\infty} \big\Vert_2
 \\&
 \quad+ L_{Z} \big\Vert {\bf z}^{k} - {\bf z}^{\infty} \big\Vert_2
 + \big\Vert
\hat{\bf c}^{k, \ITERR+1}-{\bf c}^{k, \ITERR+1}
\big\Vert_2,\ a.s..
\end{align*}

Then, taking the  expectation conditional on $\mathcal F^{k+1, \ITERR}$ and invoking Lemma \ref{lemma:rvs} and \eqref{Qerror} to bound $\sqrt{\mathbb E\big[\big\Vert \hat{\bf c}^{k+1, \ITERR}-{\bf c}^{k+1, \ITERR}
\big\Vert_2^2 | \mathcal F^{k+1,s}\big]}$, yield
\begin{align*}
&\sqrt{\mathbb E[\big\Vert {\bf c}^{k+1,\ITERR+1} - \hat{\bf c}^{k, \ITERR+1} \big\Vert_2^2|\mathcal F^{k+1, \ITERR}]}
\leq L_C \sqrt{\m d}\QNSB^0\cdot(\RTQ)^{k+1}\\&\quad + L_C (1+\QNSC) \big\Vert {\bf c}^{k+1,\ITERR} - \hat{\bf c}^{k, \ITERR} \big\Vert_2
 + L_{Z} \big\Vert {\bf z}^{k+1} - {\bf z}^{\infty} \big\Vert_2
  \\&
\quad + L_{Z} \big\Vert {\bf z}^{k} - {\bf z}^{\infty} \big\Vert_2
 + \big\Vert
\hat{\bf c}^{k, \ITERR+1}-{\bf c}^{k, \ITERR+1}
\big\Vert_2,\quad a.s..
\end{align*}
Now, taking the expectation conditional on $\mathcal F^{k, \ITERR+1} \subseteq\mathcal F^{k+1, \ITERR}$, invoking Lemma \ref{lemma:rvs}, and \eqref{Qerror} to bound $\sqrt{\mathbb E\big[\big\Vert
\hat{\bf c}^{k, \ITERR+1}-{\bf c}^{k, \ITERR+1}
\big\Vert_2^2 | \mathcal F^{k,s+1}\big]}$, yield
\begin{align*}
&\sqrt{\mathbb E[\big\Vert {\bf c}^{k+1,\ITERR+1} - \hat{\bf c}^{k, \ITERR+1} \big\Vert_2^2|\mathcal F^{k,\ITERR+1}]}
{\leq}\sqrt{\m d}\QNSB^0 (1+L_C \RTQ){\cdot}(\RTQ)^k\\&
\quad+ L_C (1+\QNSC) \sqrt{\mathbb E[\Vert {\bf c}^{k+1,\ITERR} - \hat{\bf c}^{k,\ITERR} \Vert_2 ^2|\mathcal F^{k,\ITERR+1}]}
 \\
&\quad+ L_{Z} \sqrt{\mathbb E[\big\Vert {\bf z}^{k+1} - {\bf z}^{\infty} \big\Vert_2^2|\mathcal F^{k,\ITERR+1}]}
\\&
\quad+L_{Z} \sqrt{\mathbb E[\big\Vert {\bf z}^{k}{-}{\bf z}^{\infty} \big\Vert_2^2|\mathcal F^{k,\ITERR+1}]}
{+}\QNSC\big\Vert {\bf c}^{k, \ITERR+1}{-}\hat{\bf c}^{k-1, \ITERR+1} \big\Vert_2,
\end{align*}
$a.s.$.
Taking the unconditional expectation and invoking Lemma \ref{lemma:rvs} again yield
\begin{align*}
&\sqrt{\mathbb E[\big\Vert {\bf c}^{k+1,\ITERR+1} - \hat{\bf c}^{k, \ITERR+1} \big\Vert_2^2]}\leq \sqrt{\m d}\QNSB^0 (1+L_C \RTQ) \cdot (\RTQ)^k
\\&
\quad+L_C(1{+}\QNSC)\sqrt{\mathbb E[\big\Vert{\bf c}^{k+1, \ITERR}{-}\hat{\bf c}^{k, \ITERR}\big\Vert_2^2]}
{+}L_{Z} \sqrt{\mathbb E[\big\Vert {\bf z}^{k+1}{-}{\bf z}^{\infty} \big\Vert_2^2]}
\\&
\quad+ L_{Z} \sqrt{\mathbb E[\big\Vert {\bf z}^{k} - {\bf z}^{\infty} \big\Vert_2^2]}
+\QNSC\sqrt{\mathbb E[\big\Vert{\bf c}^{k, \ITERR+1} - \hat{\bf c}^{k-1,\ITERR+1}\big\Vert_2^2]}
\nonumber \\
&\stackrel{(a)}{\leq}
\sqrt{md}\QNSB^0(1+L_C \RTQ)\cdot(\RTQ)^k
+L_C(1+\QNSC)F^{\ITERR} \cdot (\RTQ)^{k+1}\\&
\quad+\QNSC F^{\ITERR+1} \cdot (\RTQ)^k
+ L_{Z} (1+\RTQ)  {\LIAP}_0 \cdot (\RTQ)^k
  \stackrel{\eqref{Fm_cond}}{\leq}
  F^{\ITERR+1}\cdot (\RTQ)^{k+1}, 
\end{align*}
where in $(a)$ we used  the induction hypotheses
\eqref{qin_dist_induction} (applied to the second and last terms) and \eqref{error_induction} (applied to the third and fourth terms).
This proves the induction for \eqref{qin_dist_induction}, and the theorem.

\subsection{Proof of auxiliary lemmas for Theorem \ref{thm:conv}} \label{app:pf_conv_aux}
\subsubsection{Proof of Lemma \ref{lemma:V_F}}
It is not difficult to check that  conditions \eqref{V_tilde_cond} and \eqref{Fm_cond} can be satisfied by choosing
\begin{align*}
&{\LIAP}_0 \geq 
\max\Big\{c^*,\Vert {\bf z}^{0} - {\bf z}^\infty\Vert,
\frac{
\sqrt{\m d}R\QNSB^0 +\QNSC\mathbf F^\top{\bf 1}
 }{\RTQ-\RTUQ}\tilde L_A\Big\}, \\
&F^{\ITERR}
  \geq
  \frac{\sqrt{\m d}\QNSB^0(1{+}L_C \RTQ)
  {+}L_C\RTQ (1{+}\QNSC) F^{\ITERR-1}{+}L_{Z}(1{+}\RTQ)  {\LIAP}_0
}{\RTQ -\QNSC},
\end{align*}
$\forall \ITERR \in [\q],$
where $c^*,\tilde L_A$ are defined in \eqref{eq:cstar}. 
Moreover, since $\QNSC<\RTQ < 1$, it is sufficient to choose
\begin{align*}
&F^{\ITERR}
  =
  \frac{1}{\RTQ}
  \frac{\sqrt{md}\QNSB^0 (1{+}L_C \RTQ)
   +2L_C\RTQ F^{\ITERR-1}{+}2  L_{Z} {\LIAP}_0
}{1 -\QNSC/\RTQ},\ \forall \ITERR \in [\q]. 
\end{align*}
Solving this expression recursively yields \eqref{Fm}.

We now prove \eqref{eq:V0}. We begin noting  that $F^{\ITERR}$ is a non-decreasing function of $\ITERR$, hence
$F^{\ITERR} \leq F^\q$. Moreover, $F^\q$ is an affine function of ${\LIAP}_0$. Using
the facts that
$(\frac{2L_C}{1 -\QNSC/\RTQ})^\ITERR
\leq \psi (1 -\QNSC/\RTQ)^{-\ITERR}$, where $\psi\triangleq \max\{1,(2L_C)^{\q-1}\}$,
and
$$
(1 -\QNSC/\RTQ)^{-\ITERR}
\leq(1 -\QNSC/\RTQ)
(1 -\q\QNSC/\RTQ)^{-1},\ \forall s\in [R-1]\cup \{0\},$$ and $\QNSC<\bar\QNSC(\RTQ)<\RTQ/\q$, we can  upper bound $F^{\ITERR}$  as
\begin{align}
&
F^{\ITERR} \leq F^\q
\leq 
 a_1
+{\LIAP}_0 a_2\triangleq\bar F^\q, \label{eq:FR_bar}
\end{align}
where 
\begin{align}
a_1 &\triangleq
\psi\sqrt{md}\QNSB^0(1+L_C \RTQ)
 \cdot \frac{\q/\RTQ}{1 -\q\QNSC/\RTQ}
, \label{a1_ori}\\
a_2 &\triangleq
2 L_{Z}\psi \cdot \frac{\q/\RTQ}{1 -\q\QNSC/\RTQ}.
\label{a2_ori}
\end{align}
Furthermore, since
$\mathbf F^\top{\bf 1}\leq \q F^{\q}\leq \q(a_1+V_0a_2)$ and $\tilde{L}_A=L_A\sum_{s=0}^{\q-1}(L_C)^{s} \leq L_A \psi\q $, to satisfy \eqref{V_tilde_cond}, it is sufficient to choose ${\LIAP}_0$ as
\begin{align*}
&{\LIAP}_0{\geq}
\max\Big\{c^*, \Vert {\bf z}^{0} - {\bf z}^\infty\Vert
,
 L_A \psi \q^2\frac{
\sqrt{md}\QNSB^0  +\QNSC(a_1{+}V_0a_2)
 }{\RTQ-\RTUQ}\Big\}. 
 \end{align*}
Using  $x\geq \max\{c,a+b x\} \Leftrightarrow x\geq \max\{c,a/(1-b)\}$, under   $b<1$, the above condition is equivalent to
 \begin{align}
&{\LIAP}_0\geq\max\Big\{c^*, \Vert {\bf z}^{0} - {\bf z}^\infty\Vert,\frac{L_A \psi \q^2(\sqrt{md}\QNSB^0{+}\QNSC a_1)}{\RTQ-\RTUQ- L_A \psi\q^2\QNSC a_2 } \Big\}, \label{eq:V_tilde_ab}\end{align}
as long as 
$ L_A \psi \q^2\QNSC a_2/(\RTQ-\RTUQ)<1$. Solving with respect to $\omega$ (note that $a_2$ is a function of $\omega$), this condition is equivalent to $\QNSC\in[0,\bar{\QNSC}(\RTQ))$ with $\bar{\QNSC}(\RTQ)$ given by \eqref{omega_bar}, hence it holds by assumption.
Substituting   the values of $a_1,a_2$
in \eqref{eq:V_tilde_ab} and  using  $\max\{a,b\}\leq a+b$ (for $a,b\geq 0$), yields \eqref{eq:V0}.\hfill $\square$.

\subsubsection{Proof of Lemma \ref{lemma:agg_Lipt}}
At iteration $k$, let $\zeta_\ITERR $ be defined as
\begin{align*}
\zeta^\ITERR=
\mathcal A\left(\mathbf z^{k},\hat{\bf c}^{k,1},\dots,\hat{\bf c}^{k, \ITERR},\tilde{\bf c}_{\ITERR}^{\ITERR+1},\dots,\tilde{\bf c}_{\ITERR}^{\q}\right), 
\end{align*}
where
\begin{align*}
\tilde{\bf c}_{\ITERR}^{\ITERR} = \hat{\bf c}^{k, \ITERR} \quad \text{and} \quad
\tilde{\bf c}_{\ITERR}^{\ell+1} \triangleq \mathcal C^{\ell+1}\left({\bf z}^{k},\tilde{\bf c}_{\ITERR}^{\ell} \right), \forall \ell\geq \ITERR. 
\end{align*}
In other words, $\tilde{\mathbf c}_{s}^{\ell}, \zeta^\ITERR $ are the communication signals at round $\ell$ and the updated computation state, respectively, obtained by applying the unquantized communication mapping after round $s$ and the quantized one before round $s$. Clearly, ${\bf z}^{k+1}=\mathcal A ({\bf z}^{k}, \hat{\bf c}^{k,1},\cdots, \hat{\bf c}^{k,\q} )=\zeta^R$ and $\tilde{\mathcal A}({\bf z}^{k})=\zeta^0$ (unquantized update of the computation state).
It then follows that
\begin{align*}
\mathbf z^{k+1}=\zeta^\q
=
\tilde{\mathcal A}({\bf z}^{k})
+\sum_{\ITERR=1}^{\q}\left(\zeta^{\ITERR}
-\zeta^{\ITERR-1}\right),\quad a.s.. 
\end{align*}
Invoking the triangle inequality yields
\begin{align}
\big\Vert{\bf z}^{k+1}{-}{\bf z}^\infty \big\Vert  &\leq 
\big\Vert\tilde{\mathcal A}({\bf z}^{k}){-}{\bf z}^\infty\big\Vert
{+}\sum_{\ITERR=1}^{\q }\Vert\zeta^{\ITERR}
{-}\zeta^{\ITERR-1}\Vert,\ a.s.. \label{eq:agg_Lipt_1}
\end{align}
We now study the second term. 
From the Lipschitz continuity of $\mathcal A$ (Assumption \ref{assump:Lipt_A}) and the definition of $\zeta^{\ITERR}$, it holds that
$$
\Vert\zeta^{\ITERR}
-\zeta^{\ITERR-1}\Vert
\leq
 L_A\sum_{\ell=\ITERR}^{\q}
\Vert\tilde{\bf c}_{\ITERR}^{\ell}
-\tilde{\bf c}_{\ITERR-1}^{\ell}\Vert_2,\quad a.s..
$$
Furthermore,
$$\Vert\tilde{\bf c}_{\ITERR}^{\ITERR}
-\tilde{\bf c}_{\ITERR-1}^{\ITERR}\Vert_2
=
\Vert
\hat{\bf c}^{k, \ITERR}
-\mathcal C^{\ITERR}({\bf z}^{k},\hat{\bf c}^{k, \ITERR-1})
\Vert_2
=
\Vert
\hat{\bf c}^{k, \ITERR}
-{\bf c}^{k, \ITERR}
\Vert_2$$ 
$a.s.,$
 and, for $\ell>\ITERR$,
\begin{align*}&
\Vert\tilde{\bf c}_{\ITERR}^{\ell}
-\tilde{\bf c}_{\ITERR-1}^{\ell}\Vert_2
=
\Vert
\mathcal C^{\ell}({\bf z}^{k}, \tilde{\bf c}_{\ITERR}^{\ell-1})
-\mathcal C^{\ell}({\bf z}^{k}, \tilde{\bf c}_{\ITERR-1}^{\ell-1})
\Vert_2\\&
\leq
L_C\Vert
\tilde{\bf c}_{\ITERR}^{\ell-1}
-\tilde{\bf c}_{\ITERR-1}^{\ell-1}
\Vert_2
\leq\dots\leq
(L_C)^{\ell-\ITERR}\Vert
\hat{\bf c}^{k, \ITERR}
-{\bf c}^{k, \ITERR}
\Vert_2,\ a.s.,
\end{align*}
where the last step follows from induction over $\ell$. Replacing these bounds in \eqref{eq:agg_Lipt_1}, we finally obtain
\begin{align*}
&\big\Vert{\bf z}^{k+1} - {\bf z}^\infty\big\Vert 
\\&\leq 
\big\Vert\tilde{\mathcal A}({\bf z}^{k})-{\bf z}^\infty\big\Vert  
+ L_A
 \sum_{\ITERR=1}^{\q }\sum_{\ell=0}^{\q-\ITERR}
(L_C)^{\ell}
\Vert
\hat{\bf c}^{k, \ITERR}
-{\bf c}^{k, \ITERR}
\Vert_2
\\&
\leq
\lambda\big\Vert{\bf z}^{k}-{\bf z}^\infty\big\Vert 
+\tilde L_A
 \sum_{\ITERR=1}^{\q }
\Vert
\hat{\bf c}^{k, \ITERR}
-{\bf c}^{k, \ITERR}
\Vert_2,\quad a.s.,
\end{align*}
where $\tilde L_A$ is defined in \eqref{eq:cstar} and we used Assumption \ref{assump:R_conv_z}.
Taking the expectation conditional on the filtration $\mathcal F^{k,s}$ while applying Lemma \ref{lemma:rvs} and \eqref{Qerror}, 
starting from $s=R,R-1,\dots, 1$,
it follows that
\begin{align*}
&\sqrt{\mathbb E\big[\big\Vert{\bf z}^{k+1} - {\bf z}^\infty\big\Vert^2 | \mathcal F^{k,1}\big]}{\leq}\RTUQ \big\Vert {\bf z}^{k}-{\bf z}^\infty\big\Vert
{+}\tilde L_A\sqrt{\m d}\q\QNSB^{0} \cdot (\RTQ)^k
 \\
&
\quad +\tilde L_A\QNSC\sum_{\ITERR=1}^{\q }
\sqrt{\mathbb E\big[\Vert{\bf c}^{k, \ITERR} - \hat{\bf c}^{k-1, \ITERR}\Vert_2^2| \mathcal F^{k, 1} \big]} ,\quad a.s..
\end{align*}
Finally, taking unconditional expectation and using Lemma \ref{lemma:rvs} concludes the proof.

\section{Deterministic and Random quantizer's design}  \label{pf:lemmata:quant}
\subsection{Proof of Lemma \ref{lemma:quant}}
\label{pf:lemma:quant}
Let $\mathcal Q(\bullet): [-\RANGE,\RANGE]^d\to
\mathbb Q^d$ be a component-wise quantizer, with the $n$th component quantizer
$\mathcal Q_n(\bullet)$ mapping points in the interval
$[-\RANGE,\RANGE]$
to discrete points in the set $\mathbb Q$. We assume that the same quantizer is applied across all $n$, since each component is optimized with the same range and number of quantization points. The goal is to
 define a quantizer 
$\mathcal Q$ which satisfies the BC-rule
within $\mathbf x\in[-\delta,\delta]^d$
with maximal range $\RANGE$.
To this end, a necessary and sufficient condition is
\begin{align}
\label{cond_prob}
|\mathcal Q_n(x) - x| \leq
\QNSB+\QNSC|x|,\ \forall x\in[-\delta,\delta],\ \forall n \in [d].
\end{align}
The sufficiency can be proved using Cauchy–Schwarz inequality. To prove the necessity,
assume that \eqref{cond_prob} is violated for some $x\in[-\RANGE, \RANGE]$, i.e.,
$|\mathcal Q_{n}(x) - x| >\QNSB+\QNSC|x|$, and let ${\bf x} = x\mathbf 1$. It follows that
\begin{align*}
\Vert \mathcal Q({\bf x}) - {\bf x}\Vert_2 
&= \sqrt{d}|\mathcal Q_{n}(x) - x| 
> \sqrt{d}\QNSB+\QNSC\sqrt{d}|x|
\\&
= \sqrt{d}\QNSB+\QNSC\Vert {\bf x}\Vert_2, 
\end{align*}
implying the BC-rule is not satisfied at ${\bf x}$.

Hence, we now focus on the design of a component-wise quantizer $\mathcal Q_n$  satisfying \eqref{cond_prob} with maximal range $\delta$.
In the following, we omit the dependence on $n$ for convenience.

Assume that $N=|\mathbb Q|$ is odd (the case $N$ even can be studied in a similar fashion, and is provided at the end of this proof for completeness),
and let
$\mathbb Q\triangleq
\cup_{\ell=0}^{(N-1)/2}\{\tilde q_{\ell},-\tilde q_{\ell}\}$ be the set of quantization points, with $0=\tilde q_0<\tilde q_1<\dots, \tilde q_{\ell}<\tilde q_{\ell+1}<\dots$.
 Note that we restrict to a symmetric quantizer since
the error metric is symmetric around 0 (the detailed proof on the optimality of symmetric quantizers is omitted due to space constraints). 
 We then aim to solve
\begin{align}
\begin{array}{cc}
\underset{\delta\geq 0,\mathcal Q}{\max} & \delta \\
\text{s.t. } & 
|\mathcal Q(x)- x|
\leq \QNSB + \QNSC x
,\ \forall
x\in [0,\delta],
\end{array} \label{eq:lemma_quant}
\end{align}
where  the constraint
\eqref{cond_prob} is imposed only to $x\in[0,\delta]$ since
the quantizer is symmetric around 0.
Since
the quantization error in \eqref{eq:lemma_quant}
 is measured in Euclidean distance, it is optimal to restrict the quantization points to $\mathbb Q\subset[-\delta,\delta]$ and to map the input
to the nearest quantization point (ties may be resolved arbitrarily).
Then, letting $\mathcal X_{\ell}=((\tilde q_{\ell-1}+\tilde q_{\ell})/2,(\tilde q_{\ell}+\tilde q_{\ell+1})/2]$, with $\tilde q_{-1}=0$ and
$\tilde q_{(N+1)/2}=2\delta-\tilde q_{(N-1)/2}$, it follows that
$[0,\delta]\equiv\cup_{\ell =0}^{(N-1)/2}\mathcal X_\ell$ and
$\mathcal Q(x)=\tilde q_{\ell},\forall x\in\mathcal X_\ell$.
Therefore, the optimization problem \eqref{eq:lemma_quant} can be expressed equivalently as
\begin{align*}
\begin{array}{cc}
\underset{\delta\geq 0,\tilde{\mathbf q}}{\max} & \delta \\
\text{s.t.} &
(\tilde q_{\ell}- x)^2
{\leq} (\QNSB + \QNSC x)^2,\ \forall x \in \mathcal X_\ell,  \forall \ell{=}0,1,\dots,\frac{N{-}1}{2}, \\
 & 0= \tilde q_0 \leq \dots \leq \tilde q_{(N+1)/2}=2\delta-\tilde q_{(N-1)/2}.
\end{array}
\end{align*}
Equivalently,
\begin{align*}
\begin{array}{cc}
\underset{\delta\geq 0,\tilde{\mathbf q}}{\max}&\delta \\
\text{s.t.}&
\underset{x \in \mathcal X_\ell}\max\,
(\tilde q_{\ell}- x)^2
-(\QNSB + \QNSC x)^2
\leq 0, \forall \ell{=}0,1,\dots,\frac{N{-}1}{2}, \\
& 0= \tilde q_0 \leq \dots \leq \tilde q_{(N+1)/2}=2\delta-\tilde q_{(N-1)/2},
\end{array}
\end{align*}
and solving the maximization with respect to $x\in\mathcal X_\ell$ (note that the quadratic function is convex in $x$, hence it is maximized at the boundaries of $\mathcal X_\ell$), we obtain
\begin{align*}
\begin{array}{cc}
\underset{\delta\geq 0,\tilde{\mathbf q}}{\max} & \frac{\tilde q_{(N-1)/2}+\tilde q_{(N+1)/2}}{2} \\
\text{s.t.} &
\tilde q_\ell\leq
\tilde q_{\ell-1} \Big(\frac{1+\QNSC}{1-\QNSC}\Big) + \frac{2\QNSB}{1-\QNSC}
,\ \forall \ell\in[(N+1)/2], \\
 & 0 = \tilde q_0 < \tilde q_1<\dots<\tilde q_{(N+1)/2}.
\end{array}
\end{align*}
Solving this problem with respect to $\tilde{\mathbf q}$ yields $q_0 = 0$ and
\begin{align*}
q_{\ell} = q_{\ell-1} \Big(\frac{1+\QNSC}{1-\QNSC}\Big) + \frac{2\QNSB}{1-\QNSC},\quad \forall\ell\geq 1.
\end{align*}
Solving by induction, we obtain
$q_\ell$ as in \eqref{qell}, $\delta(\QNSB,\QNSC,N)$ as in \eqref{eq:delta_det}, and
\begin{align*}
    \ell(x) = \mathrm{sign}(x) \cdot \min \Big\{\ell\geq 0: \frac{q_{\ell} + q_{\ell+1}}{2} \geq |x|\Big\},
\end{align*}
yielding \eqref{eq:lstar} after solving with the expression of $q_\ell$.
A similar technique can be proved for the case when $N$ is even, yielding quantization points
\begin{align*}
&q_{\ell}=-q_{-\ell}=
\dfrac{\QNSB}{\QNSC}\Big[
\frac{(1+\QNSC)^{\ell}}{(1-\QNSC)^{\ell-1}}-1\Big],
\ \quad \forall \ell\geq 1
\end{align*}
and $\delta(\QNSB,\QNSC,N){=}\frac{q_{N/2}{+}q_{N/2+1}}{2}$, which concludes the proof.

\subsection{Proof of Lemma \ref{lemma:quant_prob}} \label{pf:lemma_quant_prob}
Using a similar technique as in Appendix \ref{pf:lemma:quant} when $N$ is odd,
using the fact that $[0,\delta]=\cup_{\ell\in[(N-1)/2]}[\tilde q_{\ell-1},\tilde q_{\ell}]$ and $\delta=\tilde q_{N/2}$
it suffices to solve
\begin{align*}
\begin{array}{cc}
\underset{\delta\geq 0,\tilde{\mathbf q}}{\max} & q_{(N-1)/2} \\
\text{s.t.}& \!\!\mathbb E[|{\mathcal Q}(x) {-} x|^2]
{\leq}(\QNSB{+}\QNSC x)^2,\forall x \in [\tilde q_{\ell-1}, \tilde q_{\ell}], \forall \ell{\in}\left[\frac{N-1}{2}\right]\!{,} \\
&\mathbb E[\mathcal Q(x)] = x,\ \ \  
 0= \tilde q_0 \leq \dots \leq \tilde q_{(N-1)/2}= \delta.
\end{array}
\end{align*}
Furthermore, 
since $x \in [\tilde q_{\ell-1}, \tilde q_{\ell}]$
is mapped to $\tilde q_{\ell-1}$ w.p. $(\tilde q_{\ell}-x)/(\tilde q_{\ell}-\tilde q_{\ell-1})$ and to $\tilde q_{\ell}$ w.p. $(x-\tilde q_{\ell-1})/(\tilde q_{\ell}-\tilde q_{\ell-1})$
to satisfy $\mathbb E[\mathcal Q(x)] = x$, the problem can be expressed equivalently as
\begin{align*}
\begin{array}{cc}
\underset{\delta\geq 0,\tilde{\mathbf q}}{\max} &q_{(N-1)/2} \\
\text{s.t.}
&(x{-}\tilde q_\ell)(x{-}\tilde q_{\ell-1}){+}(\QNSB{+}\QNSC x)^2{\geq}0,\forall x{\in}[\tilde q_{\ell-1},  \tilde q_{\ell}],\\&
\forall \ell{\in}\left[\frac{N{-}1}{2}\right],\ \ \ 
 \qquad 0= \tilde q_0 \leq \dots \leq \tilde q_{(N-1)/2}=\delta,
\end{array}
\end{align*}
or equivalently
\begin{align*}
\begin{array}{cc}
\underset{\delta\geq 0,\tilde{\mathbf q}}{\max} &q_{(N-1)/2} \\
\text{s.t.} &
\!\!\!\!\!\underset{x{\in}[\tilde q_{\ell-1}, \tilde q_{\ell}]}{\min}
(x{-}\tilde q_\ell)(x{-}\tilde q_{\ell-1}){+}(\QNSB{+}\QNSC x)^2{\geq}0
,\forall \ell{\in}\left[\frac{N{-}1}{2}\right], \\&
 \qquad 0= \tilde q_0 \leq \dots \leq \tilde q_{(N-1)/2}=\delta.
\end{array}
\end{align*}
Solving the minimization over $x\in [\tilde q_{\ell-1}, \tilde q_{\ell}]$ and solving with respect to $\tilde{\mathbf q}$ yields the following optimal quantization points:
 $q_0=0$ and
$$
q_{\ell}
=
q_{\ell-1}(\sqrt{1+(\omega)^2}+\omega)^2+2\eta(\sqrt{1+(\omega)^2}+\omega),\quad \forall \ell\geq 1.
$$
Solving by induction, we obtain
$q_\ell$ as in \eqref{qellprob}, $\delta(\QNSB,\QNSC,N)$ as in
\eqref{eq:delta_prob}, and the probabilistic quantization rule as in \eqref{eq:lstar_prob}, with $\ell$ given by
\begin{align*}
    \ell= \mathrm{sign}(x) \cdot \min \Big\{\ell\geq 0: q_{\ell} \geq |x|\Big\},
\end{align*}
yielding \eqref{eq:lstar_prob} after solving with the expression of $q_\ell$.

A similar technique can be proved for the case when $N$ is even, yielding the quantization points
$$
q_{\ell}
=
\frac{\eta}{\omega}
\left[
\frac{(\sqrt{1+(\omega)^2}+\omega)^{2\ell-1}}{\sqrt{1+(\omega)^2}}
-1
\right]
,\quad \forall \ell\geq 1.
$$
and $\delta(\QNSB,\QNSC,N)=q_{N/2}$, which concludes the proof.

\subsection{Proof of Corollary \ref{coro:converse}} \label{app:pf_converse}
Let $\mathcal Q({\bf x})$ be a generic deterministic or probabilistic quantizer with domain $[-\RANGE, \RANGE]^d$ and codomain $\mathbb Q \in \mathbb R^d$ with $|\mathbb Q| < \infty$, that satisfies the BC-rule with $\QNSB = 0$. It follows that
\begin{align}
&\nonumber
\omega\Vert\mathbf x\Vert_2\geq\sqrt{\mathbb E[\Vert\mathcal Q(\mathbf x)-\mathbf x\Vert_2^2]}
\geq
\min_{\mathbf q\in\mathbb Q}\Vert\mathbf q-\mathbf x\Vert_2
\\&
=\Vert\mathcal Q_{\mathrm{det}}(\mathbf x)-\mathbf x\Vert_2
, \forall {\bf x} \in [-\RANGE, \RANGE]^d,
\label{xcg}
\end{align}
where the lower bound is achievable by a deterministic quantizer that maps $\mathbf x$ to the nearest quantization point, denoted as $\mathcal Q_{\mathrm{det}}(\mathbf x)$.
Let ${\mathcal Q}_n(x) = {\bf e}_n^\top \mathcal Q_{\mathrm{det}}(x{\bf e}_n)$ be the projection of $\mathcal Q_{\mathrm{det}}$ on its $n$th element, where ${\bf e}_n$ is the $n$th canonical vector.
Since $\Vert \mathcal Q_{\mathrm{det}}(x{\bf e}_n) - x{\bf e}_n \Vert_2 \geq \vert {\mathcal Q}_n(x) - x\vert$, from \eqref{xcg} it follows that
\begin{align*}
\QNSC \Vert x{\bf e}_n\Vert_2
=
\QNSC |x|
\geq
\Vert \mathcal Q_{\mathrm{det}}(x{\bf e}_n) - x{\bf e}_n \Vert_2 \geq \vert {\mathcal Q}_n(x) - x\vert,
\end{align*}
$ \forall x \in [-\RANGE,\RANGE],$
hence ${\mathcal Q}_n$ satisfies the BC-rule with $\QNSB=0$ as well.
Note that ${\mathcal Q}_n$ is a scalar quantizer with $N_n\leq |\mathbb Q|$ quantization points.
However, Lemma \ref{lemma:quant} dictates that
$\RANGE=0$ for this quantizer, hence the contradiction. We have thus proved the statement of Corollary \ref{coro:converse} for both deterministic and probabilistic compression rules.

\section{Communication cost analysis} \label{pf:comm_cost}
\subsection{Proof of Theorem \ref{thm:comm_cost_linear}}
\label{pf:thm:comm_cost_linear}
We first present some preliminary results instrumental in proving Theorem \ref{thm:comm_cost_linear}, whose proofs are deferred to Appendix~\ref{app:pf_commcost_aux}.
The idea of the proof is to study the asymptotic behavior of an upper bound on the number of bits required per iteration, provided in the following lemma.
\begin{lemma} \label{lemma:num_bits}
Under the same setting as Theorem \ref{thm:conv},
and the proposed ANQ satisfying the BC rule, the average number of bits required per agent at the $k$th iteration, $B^k$, is upper bounded as
\begin{align*}
\mathbb E[B^k] \leq \log_2(S+1)\bigg[ 3d\q+d\q
\log_S
\bigg(3+\frac{\bar F^{\q}(\RTQ, \QNSC, \QNSB^0)}{\sqrt{\m d}\QNSB^0}
\bigg)\bigg]
\end{align*}
$\mathrm{bits},\ \forall k\geq 0,$
where $\bar{F}^{\q}(\RTQ, \QNSC, \QNSB^0)$ is defined in
\eqref{eq:FR_bar}.
\end{lemma}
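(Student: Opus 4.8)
The plan is to start from the per-signal bit count of Lemma~\ref{lemma:C_ell_ub} and aggregate it over the $\q$ communication rounds and the $\m$ agents before taking expectations. Writing the per-agent average cost at iteration $k$ as $B^k=\frac{1}{\m}\sum_{i=1}^{\m}\sum_{\ITERR=1}^{\q}C(\mathbf c_i^{k,\ITERR}-\hat{\mathbf c}_i^{k-1,\ITERR})$, each summand is the cost of quantizing and encoding the prediction error fed to $\mathcal Q^k$ in round $\ITERR$, whose bias at iteration $k$ is $\QNSB=\QNSB^0(\RTQ)^k$. Substituting this bias into both bounds \eqref{eq:C_ell_ub} and \eqref{eq:C_ell_ub_prob} separates a constant term $3d\log_2(S+1)$ from a signal-dependent term $d\log_2(S+1)\log_S\!\big(2+\Phi(\|\mathbf c_i^{k,\ITERR}-\hat{\mathbf c}_i^{k-1,\ITERR}\|_2)\big)$, where $\Phi(t)$ is the nested-logarithm function of the \emph{per-agent} norm $t$, its exact form differing between the deterministic and probabilistic cases only in the constant denominator logarithm.

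The first key step is to observe that $\Phi$ is nondecreasing and concave in $t$ --- it is a logarithm of an affine-in-$t$ argument --- so the composite map $t\mapsto\log_S(2+\Phi(t))$ is itself nondecreasing and concave. This legitimizes two applications of Jensen's inequality: one over the probability space, to pass from the realized norm to its mean and then to its root-mean-square via $\mathbb E[\|\cdot\|_2]\le\sqrt{\mathbb E[\|\cdot\|_2^2]}$; and one over the agent index, combined with the Cauchy--Schwarz bound $\frac{1}{\m}\sum_{i}\|\mathbf v_i\|_2\le\frac{1}{\sqrt{\m}}\big(\sum_i\|\mathbf v_i\|_2^2\big)^{1/2}=\frac{1}{\sqrt{\m}}\|\mathbf v\|_2$ applied to the stacked error $\mathbf v=\mathbf c^{k,\ITERR}-\hat{\mathbf c}^{k-1,\ITERR}$. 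Invoking the induction estimate \eqref{qin_dist_induction}, $\sqrt{\mathbb E[\|\mathbf c^{k,\ITERR}-\hat{\mathbf c}^{k-1,\ITERR}\|_2^2]}\le F^{\ITERR}(\RTQ)^k$, then replaces the random per-agent norm inside $\Phi$ by $F^{\ITERR}(\RTQ)^k/\sqrt{\m}$.

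The crucial simplification is that the factor $(\RTQ)^k$ entering through the norm estimate cancels the $(\RTQ)^k$ in the bias $\QNSB^0(\RTQ)^k$, so the argument of $\Phi$ collapses to the $k$-independent quantity $\omega F^{\ITERR}/(\sqrt{\m d}\,\QNSB^0)$; this is what makes the per-iteration bit count uniformly bounded in $k$. I then linearize: dropping the nonpositive term $\ln(1-\omega)$, using $\ln(1+t)\le t$ in the numerator, and lower-bounding the denominator logarithm --- by $\ln\tfrac{1+\omega}{1-\omega}\ge 2\omega$ in the deterministic case and by $\ln(\sqrt{1+\omega^2}+\omega)\ge\omega/\sqrt 2$ in the probabilistic one (both verified by differentiation) --- bounds the whole nested-log term $\Phi$ by $F^{\ITERR}/(\sqrt{\m d}\,\QNSB^0)$. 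Since $F^{\ITERR}$ is nondecreasing in $\ITERR$ and $F^{\q}\le\bar F^{\q}$ by \eqref{eq:FR_bar}, each of the $\q$ round-contributions is bounded by $d\log_2(S+1)\log_S\!\big(3+\bar F^{\q}/(\sqrt{\m d}\,\QNSB^0)\big)$ (the shift from $2$ to $3$ comfortably absorbing the linearization slack), and summing over rounds together with the constant term $3d\q\log_2(S+1)$ yields the claim.

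The step I expect to be the main obstacle is the careful reconciliation of per-agent and stacked norms: Lemma~\ref{lemma:C_ell_ub} measures cost through the $d$-dimensional per-agent norm normalized by $\sqrt d\,\QNSB$, whereas the only available contraction bound \eqref{qin_dist_induction} controls the stacked $\m d$-dimensional norm. It is precisely the agent-averaging Jensen step that produces the $1/\sqrt{\m}$ factor and hence the $\sqrt{\m d}$ in the denominator of the final estimate, and its validity hinges on verifying, uniformly over the admissible compression rates $\omega$, that $t\mapsto\log_S(2+\Phi(t))$ is concave and nondecreasing so that both Jensen applications are justified.
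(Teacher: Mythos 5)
Your proposal is correct and follows essentially the same route as the paper's proof: start from the per-signal bounds of Lemma~\ref{lemma:C_ell_ub} with bias $\eta^0(\sigma)^k$, use concavity/Jensen together with Cauchy--Schwarz to pass from per-agent norms to the stacked norm (producing the $\sqrt{\m d}$), invoke \eqref{qin_dist_induction} to cancel the $(\sigma)^k$ factors, and finish with $F^{\ITERR}\le \bar F^{\q}$ and a sum over rounds. The only immaterial differences are the order in which you linearize the log-ratios versus apply Jensen, and the particular elementary inequalities you use for the denominators ($\ln\tfrac{1+\omega}{1-\omega}\ge 2\omega$ and $\ln(\sqrt{1+\omega^2}+\omega)\ge \omega/\sqrt{2}$, in place of the paper's limit-as-$\omega\to 0$ monotonicity argument and the bound $\ln x\ge 1-1/x$), all of which land within the stated constant $3$.
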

In addition, we need the following lemma to connect the asymptotic results of the logarithmic function and its argument.
\begin{lemma} \label{lemma:log_bigO}
For positive functions $f, g$, it holds: $\ln f(x) = \mathcal O(\ln g(x))$ as $x\to x_0$ if $\liminf_{x\to x_0} g(x) > 1$, and $f(x) = \mathcal O(g(x))$ as $x \to x_0$.
\end{lemma}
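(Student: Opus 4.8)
The plan is to work directly from the paper's definition of the big-$\mathcal O$ symbol, namely $u(x)=\mathcal O(v(x))$ as $x\to x_0$ iff $\limsup_{x\to x_0}|u(x)/v(x)|<\infty$; so the goal is to bound $\limsup_{x\to x_0}|\ln f(x)/\ln g(x)|$. First I would convert the two hypotheses into ``eventually'' statements valid on a punctured neighborhood $U$ of $x_0$. From $f=\mathcal O(g)$ and positivity of $f,g$, there is a finite constant $C>0$ with $f(x)\le C g(x)$ for $x\in U$ (take $C$ slightly above $\limsup|f/g|$). From $\liminf_{x\to x_0}g(x)>1$, fix any $g_0$ with $1<g_0<\liminf_{x\to x_0}g(x)$; then, shrinking $U$ if necessary, $g(x)\ge g_0>1$ on $U$, so that $\ln g(x)\ge \ln g_0>0$. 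The strict inequality $\liminf g>1$ (rather than $\ge 1$) is what guarantees that the denominator $\ln g(x)$ is not merely positive but bounded away from $0$, which is the crux of the argument.

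The core estimate then follows by taking logarithms of $f(x)\le C g(x)$ and dividing by the positive quantity $\ln g(x)$: for all $x\in U$,
\begin{equation}
\frac{\ln f(x)}{\ln g(x)}\le \frac{\ln C+\ln g(x)}{\ln g(x)}=1+\frac{\ln C}{\ln g(x)}\le 1+\frac{|\ln C|}{\ln g_0}<\infty .
\end{equation}
This yields a uniform upper bound on the ratio over $U$. Since in every use of the lemma the argument $f$ of the logarithm is bounded below by a constant strictly larger than $1$ (e.g. $f\ge 3$ in Lemma~\ref{lemma:num_bits}), we also have $\ln f(x)\ge 0$, hence $\ln f(x)/\ln g(x)\ge 0$, and the displayed bound controls $|\ln f(x)/\ln g(x)|$. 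Taking $\limsup_{x\to x_0}$ then gives $\limsup_{x\to x_0}|\ln f(x)/\ln g(x)|\le 1+|\ln C|/\ln g_0<\infty$, i.e. $\ln f(x)=\mathcal O(\ln g(x))$, as claimed.

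The main obstacle --- really the only delicate point --- is controlling the denominator $\ln g(x)$. Without the hypothesis $\liminf g>1$ one could have $g(x)\to 1^+$, forcing $\ln g(x)\to 0^+$ and making the ratio blow up even when $f=\mathcal O(g)$; the strict lower bound $g_0>1$ is precisely what rules this out. A secondary point is the sign of $\ln f$: the estimate above bounds the ratio from above, and to bound $|\ln f/\ln g|$ one uses that $f$ is bounded away from $0$ (indeed $f\ge 1$) in the intended applications, so that $\ln f\ge 0$ and no separate lower bound on the ratio is needed. If one wanted the statement in full generality for functions that may approach $0$, a symmetric argument under an additional assumption such as $\liminf f>1$ would be required, but this is not needed for the uses of the lemma in the sequel.
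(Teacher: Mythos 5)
Your proof is correct and follows essentially the same route as the paper's: both rest on the decomposition $\ln f(x) \le \ln C + \ln g(x)$ (the paper writes it as $\ln f = \ln(f/g)+\ln g$ and manipulates limsups directly) together with the observation that $\liminf_{x\to x_0} g(x)>1$ keeps the denominator $\ln g(x)$ bounded away from $0$. If anything you are slightly more careful than the paper: its step $\limsup_{x\to x_0}\vert\ln(f(x)/g(x))\vert<\infty$ does not actually follow from $f=\mathcal O(g)$ alone when $f/g\to 0$, a point you address explicitly by noting that in every application the argument of the outer logarithm is bounded below by a constant exceeding $1$, so $\ln f\ge 0$ and your one-sided bound controls the absolute value.
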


We are now ready to prove the main theorem. From Lemma~\ref{lemma:num_bits}, the average number of bits per agent per iteration is upper bounded by
\begin{align*}
\mathbb E[B_k] \leq  \log_2(S+1)\bigg[ 3d\q+d\q
\log_S
\bigg(3+\frac{\bar F^{\q}(\RTQ, \QNSC, \QNSB^0)}{\sqrt{\m d}\QNSB^0}
\bigg)\bigg],
\end{align*}
$\forall k= 0,1,\ldots,$
where 
$$\bar{F}^{\q}(\RTQ, \QNSC, \QNSB^0)=a_1+a_2{\LIAP}_0,$$
with   $a_1$, $a_2$  and ${\LIAP}_0$ defined  in \eqref{a1_ori}, \eqref{a2_ori} and \eqref{eq:V0}, respectively.
We want to prove that this is 
$\mathbb E[B_k]=\mathcal O (d\ln (1+\frac{1}{\RTQ(\RTQ-\RTUQ)}))$ under Assumption \ref{assump:commcost} and conditions 
$$\QNSB^0 = \Theta (L_Z(\RTQ-\RTUQ)), 1 -\QNSC/\bar\QNSC(\RTQ)=\Omega(1).$$
Using the fact that $\bar{F}^{\q}(\RTQ, \QNSC, \QNSB^0)/\QNSB^0=a_1/\QNSB^0+a_2{\LIAP}_0/\QNSB^0$,
it is sufficient to show that 
$a_1/(\sqrt{md}\QNSB^0)=\mathcal O(1+\frac{1}{\RTQ})$ and
$a_2{\LIAP}_0/(\sqrt{md}\QNSB^0)=\mathcal O(1+\frac{1}{\RTQ(\RTQ-\RTUQ)})$.
In fact, using $\q/\RTQ\leq1/\bar\QNSC(\RTQ)$, we can bound
\begin{align*}
\frac{a_1}{\sqrt{md}\QNSB^0} &\leq
\frac{1}{\RTQ}(1+L_C \RTQ )
\max\{1,(2L_C)^{\q-1}\}\frac{\q}{1 -\QNSC/\bar\QNSC(\RTQ)}
,\\
a_2 &\leq
\frac{2 L_{Z}}{\RTQ}\max\{1,(2L_C)^{\q-1}\}\frac{\q}{1 -\QNSC/\bar\QNSC(\RTQ)}
.
\end{align*}
Clearly, $a_1/(\sqrt{md}\QNSB^0)=\mathcal O(1+\frac{1}{\RTQ})$
and $a_2=\mathcal O(\frac{L_{Z}}{\RTQ})$
since
$L_C,\q=\mathcal O(1)$ and $1 -\QNSC/\bar\QNSC(\RTQ)=\Omega(1)$.
We next study $V_0$. 
From its expression in \eqref{eq:V0}, we notice that
$V_0= \mathcal O(\sqrt{md})$ since $\QNSC/\RTQ\leq 1$,
$\max\{c^*,\Vert{\bf z}^0 - {\bf z}^\infty\Vert_2\} = \mathcal O(\sqrt{\m d})$,
$\QNSB^0 = \Theta(L_Z(\RTQ-\RTUQ))$,
$L_A\cdot L_Z, L_C,\q = \mathcal O(1)$, 
and
$1-\QNSC/\bar{\QNSC}=\Omega(1)$. Therefore, it follows that
$a_2V_0/(\sqrt{md}\QNSB^0)
=\mathcal O(\frac{1}{\RTQ(\RTQ-\RTUQ)})
=\mathcal O(1+\frac{1}{\RTQ(\RTQ-\RTUQ)})$, and the proof is completed by invoking Lemma~\ref{lemma:log_bigO}.

\subsection{Auxiliary results for Theorem \ref{thm:comm_cost_linear}} \label{app:pf_commcost_aux}
\subsubsection{Proof of Lemma \ref{lemma:num_bits}}
Let $\Delta {\bf c}_{i}^{k,\ITERR} \triangleq  {\bf c}_{i}^{k,\ITERR} - \hat{\bf c}_{i}^{k-1, \ITERR} $ be the input to the quantizer for agent $i$, at iteration $k$ and communication round $s$. We now study the average number of bits required for i) the deterministic quantizer, and ii) the probabilistic quantizer with $\mathbb E[\mathcal Q(x)] = x$.

\noindent{\bf i) Deterministic quantizer:} The average number of bits required is bounded as (see \eqref{eq:C_ell_ub}, one can also verify that the following also holds for even $N$)
\begin{align*}
b_i^{k,\ITERR}{\leq}\log_2(S{+}1)\bigg[3d{+}d\log_S\bigg(
2{+}\frac{\ln\big(1{+}\frac{\QNSC\Vert \Delta {\bf c}_{i}^{k,\ITERR}\Vert_2}{\sqrt{d}\QNSB^0\cdot (\RTQ)^k}\big)}{\ln(1+\QNSC) - \ln(1-\QNSC)}\bigg)\bigg]
\end{align*}
$\mathrm{bits}.$
We now upper bound the argument inside the second logarithm.
Since it is a decreasing function of $\omega$, it is maximized in the limit $\omega\to 0$, yielding
\begin{align*}
\frac{\ln\big(1+\frac{\QNSC\Vert \Delta {\bf c}_{i}^{k,\ITERR}\Vert_2}{\sqrt{d}\QNSB^0\cdot (\RTQ)^k}\big)}{\ln(1+\QNSC) - \ln(1-\QNSC)}
\leq
\frac{\Vert \Delta {\bf c}_{i}^{k,\ITERR}\Vert_2}{2\sqrt{d}\QNSB^0\cdot (\RTQ)^k}.
\end{align*}
With this upper bound, we can then upper bound
the average number of bits per agent at communication round $\ITERR$, iteration $k$,
 as
\begin{align*}
&\mathbb E[b^{k, \ITERR}]
\triangleq\frac{1}{\m}\sum_{i=1}^\m
\mathbb E[b_{i}^{k,\ITERR}]
\\&\stackrel{(a)}{\leq}
\log_2(S+1)\bigg[3d+
d
\log_S
\bigg(2+\frac{\sqrt{\mathbb E[\Vert
{\bf c}^{k, \ITERR} - \hat{\bf c}^{k-1, \ITERR}
\Vert_2^2]}}{2\sqrt{\m d}\QNSB^0 \cdot (\RTQ)^k}
\bigg)\bigg] \\
&\stackrel{(b)}{\leq} 
\log_2(S+1)\bigg[3d+d
\log_S
\bigg(2+\frac{F^{\ITERR}(\RTQ, \QNSC, \QNSB^0)}{2\sqrt{\m d}\QNSB^0}
\bigg)\bigg]
\\
&\stackrel{(c)}{\leq} 
\log_2(S+1)\bigg[3d+d
\log_S
\bigg(2+\frac{\bar F^{\q}(\RTQ, \QNSC, \QNSB^0)}{2\sqrt{\m d}\QNSB^0}
\bigg)\bigg],
\end{align*}
where $(a)$ follows from Cauchy–Schwarz
inequality, Jensen's inequality, and the definition of $\Delta {\bf c}^{k, \ITERR}$;  $(b)$ follows from \eqref{qin_dist_induction};
and $(c)$ follows from $F^{\ITERR}\leq\bar F^{\q}$ (see \eqref{eq:FR_bar}).

\noindent{\bf ii) Probabilistic quantizer with $\mathbb E[\mathcal Q(x)] = x$:} Using the same technique as in i), along with the inequality $1-1/x\leq \ln(x)\leq x-1$ for $x>1$ to bound the argument inside the second logarithm of \eqref{eq:C_ell_ub_prob}, we find the bound
\begin{align*}
\mathbb E[b^{k, \ITERR}]
\leq
\log_2(S+1)\bigg[3d+d
\log_S
\bigg(3+\frac{\bar F^{\q}(\RTQ, \QNSC, \QNSB^0)}{\sqrt{\m d}\QNSB^0}
\bigg)\bigg].
\end{align*}
One can also verify that it also holds for even $N$.

Finally, for both the deterministic and probabilistic cases, the proof is completed by
summing over $\ITERR \in [\q]$ to get the average communication cost per agent at iteration $k$.

\subsubsection{Proof of Lemma \ref{lemma:log_bigO}}
If $f(x) = \mathcal O(g(x))$ as $x \to x_0$ and $\liminf_{x\to x_0} g(x) > 1$, then
\begin{align*}
&\limsup_{x\to x_0}\Big\vert\frac{\ln f(x)}{\ln g(x)}\Big\vert
\leq
1+\limsup_{x\to x_0}\Big\vert\frac{\ln(f(x)/g(x))}{\ln g(x)}  \Big\vert
\\&\stackrel{(a)}{\leq} 1+\frac{\limsup_{x\to x_0}|\ln(f(x)/g(x))|}{|\liminf_{x\to x_0}\ln g(x)|} 
<\infty,
\end{align*}
  where $(a)$ follows from $\liminf_{x\to x_0} g(x) > 1$.  This  completes the proof.

\subsection{Proof of Theorem \ref{thm:comm_cost_mesh}} \label{pf:thm:comm_cost_mesh}
Since $\sqrt{\mathbb E[\Vert {\bf z}^{k} - {\bf z}^\infty\Vert^2]} \leq{\LIAP}_0\cdot (\RTQ)^k$ (cf. Theorem \ref{thm:conv}), the $\varepsilon$-accuracy is achieved if $
k[-\ln(\RTQ)]\geq\ln({\LIAP}_0/\sqrt{\m\varepsilon})$, which yields
$k(1-\RTQ)\geq\ln({\LIAP}_0/\sqrt{\m\varepsilon})$ since $-\ln(\RTQ) \geq 1-\RTQ$. Hence, $\varepsilon$-accuracy is achieved if all conditions in Theorem \ref{thm:conv} hold and $k \geq k_{\varepsilon} \triangleq \big\lceil\frac{1}{1-\RTQ}\ln\frac{{\LIAP}_0}{\sqrt{\m\varepsilon}} \big\rceil$. Hence, to compute the upper bound of the communication cost $\sum_{k=0}^{k_{\varepsilon}-1}\mathbb E[B^k]$, we need the upper bounds for $\mathbb E[B^k],\frac{1}{1-\RTQ}$ and ${\LIAP}_0$.
In the proof of Theorem~\ref{thm:comm_cost_linear}, we found that, under Assumption \ref{assump:commcost} and the conditions $1{-}\QNSC/\bar\QNSC(\RTQ)=\Omega(1), \QNSB^0 = \Theta(L_Z(\RTQ-\RTUQ))$,
$$
\mathbb E[B^k] = \mathcal O\Big(d\log_2\Big(1+\frac{1}{\RTQ(\RTQ-\RTUQ)}\Big)\Big),\
{\LIAP}_0=\mathcal O(\sqrt{md}).
$$
Moreover, it can be shown that
$1/\RTQ\leq\frac{(1-\RTUQ)^2}{(1-\RTQ)(\RTQ-\RTUQ)}$
for 
$\RTQ\in(\RTUQ,1)$, and therefore
$$
\frac{1}{\RTQ(\RTQ-\RTUQ)}
\leq
\frac{1}{(1-\RTUQ)}
\Big[
\frac{(1-\RTUQ)^2}{(1-\RTQ)(\RTQ-\RTUQ)}
\Big]^2
\frac{1-\RTQ}{1-\RTUQ}
=\mathcal O\Big(\frac{1}{1-\RTUQ}\Big),
$$
where we used the assumption in Theorem \ref{thm:comm_cost_mesh} that  $\frac{(1-\RTUQ)^2}{(1-\RTQ)(\RTQ-\RTUQ)}=\mathcal O(1)$.
It then follows from Lemma \ref{lemma:log_bigO} that
$\mathbb E[B^k] = \mathcal O(d\log_2(1+\frac{1}{1-\RTUQ}))$.
On the other hand,  we can bound $k_{\varepsilon}$ as
$$
k_{\varepsilon}\leq
\frac{1}{1-\RTUQ}
\Big(1-\RTUQ+\frac{1-\RTUQ}{1-\RTQ}\log_2\frac{{\LIAP}_0}{\sqrt{\m\varepsilon} }\Big)
=\mathcal O\Big(\frac{1}{1-\RTUQ}\log_2\Big(\frac{d}{\varepsilon}\Big)\Big),
$$
since $\frac{1-\RTUQ}{1-\RTQ}=\mathcal O(1)$ and ${\LIAP}_0=\mathcal O(\sqrt{md})$,
Therefore, the communication cost satisfies
\begin{align*}
&\sum_{k=0}^{k_{\varepsilon}-1}\mathbb E[B^k]
=\mathcal O\Big(dk_{\varepsilon}\log_2\Big(1+\frac{1}{1-\RTUQ}\Big)\Big)
\\&
=\mathcal O\Big(
\frac{d}{1-\RTUQ}\log_2\Big(\frac{d}{\varepsilon}\Big)
\log_2\Big(1+\frac{1}{1-\RTUQ}\Big)\Big),
\end{align*}
which completes the proof.

\subsection{Proof of Lemma \ref{lemma:C_ell_ub}} \label{app:pf_C_ell_ub}
Consider $\ell\geq 0$. Using \eqref{Lb},
the number of information symbols required to encode $\ell$ is
$$
b_\ell^*=\min\bigg\{b\geq 0:\ell\leq\bigg\lfloor\frac{(S)^{b+1}-1}{2(S-1)}\bigg\rfloor\bigg\}.
$$
Similarly, for $\ell<0$,
$$
b_\ell^*=\min\bigg\{b\geq 0:-\ell\leq
\bigg\lceil\frac{(S)^{b+1}-1}{2(S-1)}\bigg\rceil-1
\bigg\}.
$$
Since $\lfloor\frac{(S)^{b+1}-1}{2(S-1)}\rfloor
\geq \lceil\frac{(S)^{b+1}-1}{2(S-1)}\rceil-1
\geq \frac{(S)^{b+1}-1}{2(S-1)}-1
$, we can then upper bound $b_\ell, \ell\in\mathbb Z$, as
\begin{align*}
b_\ell^*
&\leq \min\{b\geq 1:1+2(S-1)(1+|\ell|)\leq (S)^{b+1}\} \\
&=
\min\{b\geq 1:
b\geq\log_S(2-1/S+2(1-1/S)|\ell|)
\}\\&
=
\lceil\log_S(2-1/S+2(1-1/S)|\ell|)\rceil.    
\end{align*}
Using $\lceil x\rceil\leq x+1$
We can then further upper bound
\begin{align*}
b_\ell^*&\leq
1+\log_S(2-1/S+2(1-1/S)|\ell|)
\leq
\log_S(2S+2S|\ell|)\\&
\leq
2+\log_S(1+|\ell|).
\end{align*}
This yields the upper bound to the communication cost (including the termination symbol)
\begin{align}
\bar C_{\mathrm{comm}}(\ell)
\leq
3+\log_S(1+|\ell|) \quad \mathrm{symbols}.  \label{eq:C_ell_ub_ell}
\end{align}
Let ${\bf x} = (x_n)_{n=1}^d$.
Note that for both the deterministic and probabilistic quantizers, we can express $\ell(x)$ as
$$
|\ell(x)|\leq
\Big\lceil
c_1+c_2\ln\Big(1+\frac{\omega}{\eta}|x|\Big)
\Big\rceil,
$$
for some $c_1\leq 0$, $c_2>0$
(see \eqref{eq:lstar} and \eqref{eq:lstar_prob} for a closed-form expression of $c_1$ and $c_2$). Invoking \eqref{eq:C_ell_ub_ell} and $\bar C_{\mathrm{comm}}({\bf x}) = \sum_{n=1}^d\bar C_{\mathrm{comm}}(\ell_n)$ yields
\begin{align*}
&C({\bf x})
\leq 3d+\sum_{n=1}^d\log_S\bigg(1 + \Big\lceil
c_1+c_2\ln\Big(1+\frac{\omega}{\eta}|x_n|\Big)
\Big\rceil\bigg) \nonumber\\
&\stackrel{(a)}{\leq}  3d+d\log_S\bigg(
2+ 
c_1+c_2\ln\Big(1+\frac{\omega\Vert \mathbf x\Vert_2}{\sqrt{d}\eta}\Big)\bigg) \quad \mathrm{symbols}  \nonumber \\
&= \log_2(S{+}1)\bigg[3d+d\log_S\bigg(
2+ 
c_1+c_2\ln\Big(1{+}\frac{\omega\Vert \mathbf x\Vert_2}{\sqrt{d}\eta}\Big)\bigg)\bigg]
\end{align*}
$\mathrm{bits},$
where $(a)$ follows from $\lceil x\rceil \leq x+1$,
Jensen's inequality and Cauchy–Schwarz inequality, in order. Invoking the expressions of $c_1$ and $c_2$ from \eqref{eq:lstar} and \eqref{eq:lstar_prob}, respectively, yield the result for the deterministic and probabilistic quantizers with odd $N$. Similar techniques can be used to find $\ell(x)$ and thus the result for quantizers with even $N$.

\section{Examples of \eqref{eq:M}}
\label{app:examples}
In this section, we will show that \eqref{eq:M} contains a gamut of distributed algorithms, corresponding to different choices of $\q, \mathcal C_{i}^{\ITERR}$, and $\mathcal A_i$. Given \eqref{eq:P}, we will assume  that each $f_i$ is $L$-smooth and $\mu$-strongly convex, and define $\tilde{\bf x}_i^* = \arg\min_{{\bf x}_i}f_i({\bf x}_i)$.

Every distributed algorithm on mesh networks we will describe below  alternates one step of optimization with possibly multiple rounds of communications. In each communication round, every agent $i$  combines linearly the signals received by its neighbors using weights $(w_{i j})_{j\in \mathcal{N}_i}$; let ${\bf W} = (w_{i j})_{i,j=1}^\m$. 
Consistently with the undirected graph $\mathcal G$, we will tacitly  assume that ${\bf W}$ is symmetric and doubly stochastic, i.e., ${\bf W}={\bf W}^\top$ and ${\bf W}\cdot\mathbf 1=\mathbf 1$,
with  $w_{i j} > 0$ if $(j,i) \in \mathcal{E}$, and $w_{i j} = 0$ otherwise. We assume that the eigenvalues of $\bf W$ are in $[\nu, 1]$, with $\nu>0$.\footnote{This assumption is also required in \cite{Xu2020} for prox-EXTRA, prox-NEXT, prox-DIGing, and prox-NIDS to achieve $\Vert {\bf z}^k - {\bf z}^\infty\Vert = \mathcal O(\sqrt{\m d }(\RTUQ)^k)$.}  
Note that this condition can be achieved by design: in fact, given a doubly stochastic weight matrix 
$\tilde{\bf W}$, one can choose  
  \begin{equation}\label{eq:W_from_W_tilde}
      \mathbf{W}=\frac{(1+\nu)}{2}\mathbf{I}+\frac{(1-\nu)}{2}\tilde{\mathbf W}, \end{equation} 
 for any given  $\nu \in (0,1]$. Note that, for any given ${\bf z}^0$ and ${\bf z}^\infty$ with bounded entries, it holds $\Vert {\bf z}^0 - {\bf z}^\infty\Vert_2  = \mathcal O(\sqrt{\m d})$.

Finally, in the rest of this section, we will adopt the following notations: ${\bf x}^{k} = ({\bf x}_i^{k})_{i=1}^\m, {\bf y}^{k} = ({\bf y}_i^{k})_{i=1}^\m, {\bf w}^{k} = ({\bf w}_i^{k})_{i=1}^\m, {\bf x} = ({\bf x}_i)_{i=1}^\m, {\bf y} = ({\bf y}_i)_{i=1}^\m$, ${\bf w} = ({\bf w}_i)_{i=1}^\m$
and $\tilde{\bf x}^* = (\tilde{\bf x}_i^*)_{i=1}^{\m}$;  $\hat{\bf W} = {\bf W} \otimes {\bf I}_d$, and ${\bf G}^\dagger$ is the pseudo-inverse of matrix $\bf G$. Given ${\bf x}^{k} = ({\bf x}_i^{k})_{i=1}^\m$, we also define  $\nabla f({\bf x}^k) \triangleq (\nabla f_i({\bf x}_i^k))_{i=1}^{\m}$. {For any function $g:\mathbb{R}^d\to \mathbb{R}$ and positive semi-definite matrix $\bf G$, define $\Vert {\bf x} \Vert_{\bf G} \triangleq \sqrt{{\bf x}^\top {\bf G}{\bf x}}$ and }
\begin{align*}
    {\rm prox}_{{\bf G}, g}({\bf x}) \triangleq \underset{\bf z\in \mathbb{R}^d}{\arg\min}\quad g({\bf z})+\frac{1}{2}\left\Vert  {\bf z}-{\bf x}\right\Vert_{{\bf G}^{-1}}^2. 
\end{align*}

\subsection{GD over star networks \cite{Nesterov2014}}
\label{AppD1}
Consider Problem \eqref{eq:P} with $r\equiv0$ over a   master/workers system. The GD update 
\begin{align}\label{eq:GD-master}
{\bf x}^{k+1} &= {\bf x}^{k} - \frac{\STEP}{\m}\sum_{i=1}^\m \nabla f_i\left({\bf x}^{k}\right),
\end{align}
with ${\bf x}^0 \in \mathbb R^{d}$, is implemented at the master node as follows:   at iteration $k$, the server broadcasts ${\bf x}^{k}$ to the  $\m$ agents;  each agent $i$ then  computes its own gradient $\nabla f_i({\bf x}^{k})$ and sends it back to the master; upon collecting all local gradients, the server updates the variable ${\bf x}^{k+1}$ according to (\ref{eq:GD-master}). 

The GD (\ref{eq:GD-master}) can be cast as \eqref{eq:M} with $\q=1$ round of communications, using  the  following:
\begin{align}
&{\bf z} = {\bf 1}_{\m} \otimes {\bf x}, \nonumber \\
&\hat{\bf c}_{i}^{k,1} = 
\mathcal C_{i}^1\big({\bf z}_i^{k}, {\bf 0} \big) 
= \nabla f_i\left({\bf x}^{k}\right), \label{eq:GD_star_c1}\\
&{\bf z}^{k+1} 
= \mathcal A\big(
{\bf z}^{k}, \hat{\bf c}_{[\m]}^{k,1} \big)
= {\bf 1}_{\m} \otimes{\bf x}^{k} - \frac{\STEP}{\m} \sum_{i=1}^\m {\bf 1}_{\m} \otimes\hat{\bf c}_{i}^{k,1}. \label{eq:GD_star_A}
\end{align}
We now show that the above instance of \eqref{eq:M} satisfies Assumptions \ref{assump:R_conv_z}, \ref{assump:Lipt_A}, \ref{assump:Lipt_C}, and \ref{assump:commcost}.

$\bullet$ \textbf{On Assumption 1:} Using \cite{Nesterov2014} it is not difficult to check that, if $\STEP = 2/(\mu + L)$, then GD over star networks satisfies Assumption \ref{assump:R_conv_z} with 
\begin{align*}
\RTUQ = \frac{\kappa-1}{\kappa+1} < 1,
\end{align*}
and  the norm $\|\bullet\|$ defined as 
\begin{align*}
\Vert{\bf z} \Vert =  \sqrt{m}\Vert  {\bf x} \Vert_2.
\end{align*}
Note that $\Vert{\bf z} \Vert=\Vert{\bf z} \Vert_2$.

$\bullet$ \textbf{On Assumptions \ref{assump:Lipt_A}, \ref{assump:Lipt_C}, and \ref{assump:commcost}:} Based on \eqref{eq:GD_star_c1} and \eqref{eq:GD_star_A}, the mappings $\mathcal A$ and $\mathcal C$ read
\begin{align*}
    \mathcal A({\bf z}, {\bf c}^1) = {\bf z} - \frac{\STEP}{\m} \sum_{i=1}^\m {\bf 1}_{\m} \otimes {\bf c}_{i}, \quad
    \mathcal C^1({\bf z}, {\bf 0}) =  \nabla f({\bf z}),
\end{align*}
respectively; and $\mathcal Z = \{{\bf 1}_{\m}\otimes {\bf x}:{\bf x}\in \mathbb R^d\}$. 
Since
\begin{align*}
\mathcal A\left({\bf z}, {\bf c}\right) - \mathcal A\left({\bf z}, {\bf c}'\right)
= - \frac{\STEP}{\m} \sum_{i=1}^m {\bf 1}_{\m} \otimes \left({\bf c}_{i} - {\bf c}_{i}'\right),
\end{align*}
we have
\begin{align*}
\big\Vert\mathcal A\left({\bf z}, {\bf c}\right) - \mathcal A\left({\bf z}, {\bf c}'\right)\big\Vert
= \frac{\STEP}{\sqrt{\m}}\left\Vert \sum_{i=1}^m ({\bf c}_{i} - {\bf c}_{i}')\right\Vert_2
{\leq} \STEP\left\Vert {\bf c}-{\bf c}'\right\Vert_2.
\end{align*}
Hence,  Assumption \ref{assump:Lipt_A} holds with $L_A =\STEP$. 

We now derive $L_C$ and $L_Z$. Note that 
\begin{align*}
\Vert \mathcal C^1({\bf z}, {\bf 0})- \mathcal C^1({\bf z}', {\bf 0}) \Vert_2
\leq  L\left\Vert{\bf z} - {\bf z}'\right\Vert_2,
\end{align*}
which implies that Assumption \ref{assump:Lipt_C} holds with  $L_C = 0$ and $L_Z =  L$. Since $\STEP\leq 2/L$, it follows that $L_A\cdot L_Z=\mathcal O(1)$ and $\Vert{\mathcal C}^{1}({\bf z}^{0}, {\bf 0})\Vert_2 \leq  L \Vert{\bf z}^{0} - \tilde{\bf x}^* \Vert_2 = \mathcal O(L_Z\sqrt{\m d})$.
Hence, Assumption \ref{assump:commcost} holds.

\subsection{(Prox-)EXTRA \cite{Xu2020}}
\label{subsec:ABC_EXTRA}
The update of prox-EXTRA solving \eqref{eq:P} reads 
\begin{align*}
{\bf x}^{k} &= {\rm prox}_{\STEP{\bf I}, r}({\bf w}^{k}), \\
{\bf w}^{k+1} &= \hat{\bf W}{\bf x}^{k}-\STEP \nabla f({\bf x}^{k})-{\bf y}^{k},\\
{\bf y}^{k+1} &= {\bf y}^{k}+\big({\bf I} - \hat{\bf W}\big){\bf w}^{k+1},
\end{align*}
with ${\bf y}^0 = {\bf 0}$ and ${\bf w}^0 \in \mathbb R^{\m d}$.

Prox-EXTRA can be cast as \eqref{eq:M} with $\q=2$ rounds of communications with 
\begin{align}
& {\bf z}^\top=[{\bf y}^\top,{\bf w}^\top], \nonumber\\
& \hat{\bf c}_{i}^{k,1}=\mathcal C_{i}^1\big({\bf z}_i^{k},{\bf 0}\big) 
= {\rm prox}_{\STEP{\bf I}, r}({\bf w}_i^{k}), \label{eq:p_EXTRA_c1} \\
&
\hat{\bf c}_{i}^{k,2}{=}\mathcal C_{i}^2\big({\bf z}_i^{k}, \hat{\bf c}_{\mathcal N_i}^{k,1}\big) 
{=}\sum_{j \in \mathcal N_i}w_{ij}\hat{\bf c}_{j}^{k,1} - \STEP \nabla f_i(\hat{\bf c}_i^{k,1}) - {\bf y}_i^{k}, \label{eq:p_EXTRA_c2} \\
&{\bf z}_i^{k+1} 
= \mathcal A_i\big(
{\bf z}_i^{k}, \hat{\bf c}_{{\mathcal N}_i}^{k,1},\hat{\bf c}_{{\mathcal N}_i}^{k,2} \big)\nonumber\\&
\qquad= \left[
\begin{array}{*{20}c}
{\bf y}_i^{k}
+\sum_{j \in \mathcal N_i} w_{ij}(\hat{\bf c}_{i}^{k,2}-\hat{\bf c}_{j}^{k,2}) \\
\hat{\bf c}_{i}^{k,2}
\end{array}
\right]. \label{eq:p_EXTRA_A}
\end{align}

We now show that the above instance of \eqref{eq:M} satisfies  Assumptions \ref{assump:R_conv_z}, \ref{assump:Lipt_A}, \ref{assump:Lipt_C}, and \ref{assump:commcost}.

$\bullet$ \textbf{On Assumption 1:} Using \cite[Theorem 18]{Xu2020} it is not difficult to check that, if $\STEP = \frac{2\rho_{m}({\bf W})}{L + \mu \rho_{m}({\bf W})}$
and $\nu=\frac{\kappa}{\kappa+1}$,
then prox-EXTRA satisfies Assumption \ref{assump:R_conv_z} with 
\begin{align*}
  \RTUQ &= \max\Big\{\frac{\kappa-\rho_m({\bf W})}{\kappa+\rho_m({\bf W})}\frac{1}{\sqrt{\rho_m({\bf W})}}, \sqrt{\rho_{2}\big({\bf W}\big)}\Big\}
\\&
 \leq
\max\Big\{
\frac{\kappa}{\kappa+1}
\sqrt{\frac{(\kappa+1)^3}{\kappa(\kappa+2)^2}}
, \sqrt{\rho_{2}\big({\bf W}\big)}\Big\} \\&
 \leq
\max\Big\{
\frac{\kappa}{\kappa+1}
, \sqrt{\rho_{2}\big({\bf W}\big)}\Big\} 
  < 1,
\end{align*}
(note that $\rho_m({\bf W})\geq\nu$)
and the norm $\|\bullet\|$ defined as  
\begin{align*}
&\Vert{\bf z}\Vert^2 = {\bf y}^\top({\bf I} - \hat{\bf W})^\dagger{\bf y}+{\bf w}^\top\hat{\bf W}^{-1}{\bf w}.
\end{align*}
Note that  $\Vert{\bf z}\Vert^2\geq \Vert{\bf z}\Vert^2_2$, due to
  $\rho_{i}({\bf W}) \in [\nu,1]$, $i \in [m]$.

$\bullet$ \textbf{On Assumptions {\ref{assump:Lipt_A}, \ref{assump:Lipt_C}, and} \ref{assump:commcost}:} 
Based on \eqref{eq:p_EXTRA_c1}, \eqref{eq:p_EXTRA_c2} and \eqref{eq:p_EXTRA_A}, the mappings $\mathcal A$ and $\mathcal C$ read
\begin{align*}
    &\mathcal A({\bf z}, {\bf c}^1, {\bf c}^2) = \left[
\begin{array}{*{20}c}
{\bf y} + ({\bf I} - \hat{\bf W}) {\bf c}^{2}  \\
{\bf c}^{2}
\end{array}
\right], \\
&\mathcal C^1({\bf z}, {\bf 0}) = {\rm prox}_{\STEP{\bf I}, r}({\bf w}),\ \text{and} \ 
\mathcal C^2({\bf z}, {\bf c}) = \hat{\bf W}{\bf c} - \STEP \nabla f({\bf c}) - {\bf y},
\end{align*}
respectively; and $\mathcal Z = \mathrm{span}({\bf I}-\hat{\bf W}) \times \mathbb R^{md}$, where we defined (with a slight abuse of notation) ${\rm prox}_{\STEP{\bf I}, r}({\bf w}) = ({\rm prox}_{\STEP{\bf I}, r}({\bf w}_i))_{i=1}^{\m}$.
Note that
\begin{align*}
& \big\Vert{\mathcal A}\left({\bf z}, {\bf c}^{1}, {\bf c} \right) - {\mathcal A}\left({\bf z}, {\bf c}^{1}, {\bf c}'\right) \big\Vert^2 
= \Vert \sqrt{\hat{\bf W}^{-1}}({\bf c} - {\bf c}')\Vert_2^2
\\&
 +\Big\Vert \sqrt{{\bf I}-\hat{\bf W}}({\bf c} - {\bf c}')\Big\Vert_2^2 \leq (1+\nu^{-1}) \Vert {\bf c} - {\bf c}'\Vert_2^2
 \leq 3\Vert {\bf c} - {\bf c}'\Vert_2^2,
\end{align*}
(note that $\nu^{-1}=1+\kappa^{-1}\leq 2$)
and ${\mathcal A}\left({\bf z}, {\bf c}^1, {\bf c}^2\right)$ is constant with respect to ${\bf c}^1$, hence Assumption \ref{assump:Lipt_A} holds with $L_A = \sqrt{3}$.

We next derive $L_C$ and $L_Z$. Since the proximal mapping is non-expansive \cite{Moreau1965}, it follows that
\begin{align*}
&\big\Vert \mathcal C^1({\bf z}, {\bf 0})- \mathcal C^1({\bf z}', {\bf 0})\big\Vert_2 
= \Vert {\rm prox}_{\STEP{\bf I}, r}({\bf w}) - {\rm prox}_{\STEP{\bf I}, r}({\bf w}')\Vert_2 \\&\qquad
\leq \Vert {\bf w}-{\bf w}'\Vert_2
\leq \Vert {\bf z} - {\bf z}'\Vert_2, \\
&\big\Vert \mathcal C^2({\bf z}, {\bf c})- \mathcal C^2({\bf z}', {\bf c}')\big\Vert_2
\\&\qquad
{=}\big\Vert \hat{\bf W} ({\bf c}{-}{\bf c}'){-}\STEP (\nabla f({\bf c}){-}\nabla f({\bf c}'))
{-}({\bf y}{-}{\bf y}')\big\Vert_2 \\
&\qquad\leq   \Vert {\bf c}-{\bf c}'\Vert_2 + \STEP L \Vert {\bf c}-{\bf c}'\Vert_2 + \Vert {\bf z}-{\bf z}'\Vert_2.
\end{align*}
Hence,
 Assumption \ref{assump:Lipt_C} holds with $L_C = 1 + \STEP L$ and $L_Z = 1$. Since $\STEP\leq 2/L$, it follows that $L_C = \mathcal O(1)$. For the initial conditions, we have
\begin{align*}
&\Vert{\mathcal C}^{1}({\bf z}^{0}, {\bf 0})\Vert_2 = \Vert{\rm prox}_{\STEP{\bf I}, r}({\bf w}^0)\Vert_2 
\\&
\stackrel{(a)}{\leq} \Vert{\rm prox}_{\STEP{\bf I}, r}({\bf w}^0) - {\rm prox}_{\STEP{\bf I}, r}({\bf w}^\infty)\Vert_2 + \Vert{\rm prox}_{\STEP{\bf I}, r}({\bf w}^\infty)\Vert_2 \\&
\stackrel{(b)}{\leq} \Vert{\bf w}^0 - {\bf w}^\infty\Vert_2 + \Vert {\bf x}^\infty \Vert_2 = \mathcal O(\sqrt{\m d}), \\&
\Vert{\mathcal C}^{2}({\bf z}^{0}, {\bf 0})\Vert_2
= \Vert {\bf y}^0+\STEP \nabla f({\bf 0})\Vert_2
\\&
\leq \Vert {\bf y}^0 - {\bf y}^\infty\Vert_2 + \Vert {\bf y}^\infty\Vert_2
+\STEP \Vert\nabla f({\bf 0})\Vert_2
\\&
\leq \Vert {\bf y}^0 - {\bf y}^\infty\Vert_2 + \Vert {\bf y}^\infty\Vert_2
+\STEP L\Vert\tilde{\bf x}^*\Vert_2
= \mathcal O(\sqrt{\m d}),
\end{align*}
where $(a)$ follows from the triangle inequality; and $(b)$ follows from the non-expansive property of the proximal mapping, and ${\bf x}^\infty={\rm prox}_{\STEP{\bf I}, r}({\bf w}^\infty)$ at the fixed point.

Therefore, $L_A\cdot L_Z=\mathcal O(1),L_C = \mathcal O(1)$, $\Vert{\mathcal C}^{1}({\bf z}^{0}, {\bf 0})\Vert_2=\mathcal O(L_Z\sqrt{\m d})$ and $ \Vert{\mathcal C}^{2}({\bf z}^{0}, {\bf 0})\Vert_2=\mathcal O(L_Z\sqrt{\m d})$; hence Assumption \ref{assump:commcost} holds.

\subsection{(Prox-)NIDS \cite{Xu2020}}
\label{subsec:ABC_NIDS}
The update of prox-NIDS solving \eqref{eq:P}, reads
\begin{align*}
{\bf x}^{k} &= {\rm prox}_{\STEP{\bf I}, r}({\bf w}^{k}), \\
{\bf w}^{k+1} &= \hat{\bf W}\big({\bf x}^{k}-\STEP \nabla f({\bf x}^{k})\big)-{\bf y}^{k},\\
{\bf y}^{k+1} &= {\bf y}^{k}+({\bf I} - \hat{\bf W}){\bf w}^{k+1},  
\end{align*}
with ${\bf y}^0 = {\bf 0}$ and ${\bf w}^0 \in \mathbb R^{\m d}$.

Prox-NIDS can be cast as \eqref{eq:M} with $\q=2$ rounds of communications, using the following:
\begin{align}
&{\bf z}^\top=[{\bf y}^\top,{\bf w}^\top], \nonumber \\
&\mathcal C_{i}^1\big({\bf z}_i^{k}, {\bf 0}\big) 
= {\rm prox}_{\STEP{\bf I}, r}({\bf w}_i^{k}) - \STEP \nabla f_i({\rm prox}_{\STEP{\bf I}, r}({\bf w}_i^{k})), \label{eq:p_NIDS_c1} \\
&\mathcal C_i^{2}\big({\bf z}_i^{k}, \hat{\bf c}_{\mathcal N_i}^{k,1}\big) 
= \sum_{j \in \mathcal N_i}w_{ij}\hat{\bf c}_{j}^{k,1}  - {\bf y}_i^{k}, \label{eq:p_NIDS_c2} \\
&{\bf z}_i^{k+1} 
= \mathcal A_i\big(
{\bf z}_i^{k}, \hat{\bf c}_{{\mathcal N}_i}^{k,1},\hat{\bf c}_{{\mathcal N}_i}^{k,2} \big)
\nonumber\\&
\qquad= \left[
\begin{array}{*{20}c}
{\bf y}_i^{k}+\sum_{j \in \mathcal N_i} w_{ij} \left(\hat{\bf c}_{i}^{k,2} - \hat{\bf c}_{j}^{k,2}\right)  \\
\hat{\bf c}_{i}^{k,2}
\end{array}
\right].  \label{eq:p_NIDS_A}
\end{align}
We now show that the above instance of \eqref{eq:M} satisfies Assumptions \ref{assump:R_conv_z}, \ref{assump:Lipt_A}, \ref{assump:Lipt_C}, and \ref{assump:commcost}.

$\bullet$ \textbf{On Assumption 1:} Using \cite[Theorem 18]{Xu2020} it is not difficult to check that, if $\STEP = 2/(\mu + L)$, then prox-NIDS satisfies Assumption~\ref{assump:R_conv_z} with  
\begin{align*}
    \RTUQ = \max\Big\{\frac{\kappa-1}{\kappa+1}, \sqrt{\rho_{2}\big({\bf W}\big)}\Big\} < 1,
\end{align*}
and the norm $\|\bullet\|$ defined as 
\begin{align*}
\Vert{\bf z}\Vert^2 ={\bf y}^\top\hat{\bf W}^{-1}({\bf I}-\hat{\bf W})^\dagger \hat{\bf W}^{-1}{\bf y}+ {\bf w}^\top \hat{\bf W}^{-1}{\bf w}.
\end{align*}
Note that $\Vert{\bf z}\Vert^2 \geq \Vert{\bf z}\Vert_2^2$.

$\bullet$ \textbf{On Assumptions \ref{assump:Lipt_A}, \ref{assump:Lipt_C}, and \ref{assump:commcost}:} Based on \eqref{eq:p_NIDS_c1}, \eqref{eq:p_NIDS_c2}, and \eqref{eq:p_NIDS_A}, the mappings $\mathcal A$ and $\mathcal C$ read
\begin{align*}
    &\mathcal A({\bf z}, {\bf c}^1, {\bf c}^2) = \left[
\begin{array}{*{20}c}
{\bf y} + ({\bf I} - \hat{\bf W}) {\bf c}^{2}  \\
{\bf c}^{2}
\end{array}
\right], \\
&\mathcal C^1({\bf z}, {\bf 0}) = {\rm prox}_{\STEP{\bf I}, r}({\bf w}) - \STEP \nabla f({\rm prox}_{\STEP{\bf I}, r}({\bf w})),\  \text{and} \\&
\mathcal C^2({\bf z}, {\bf c}) = \hat{\bf W}{\bf c} - {\bf y},
\end{align*}
respectively; and $\mathcal Z = \mathrm{span}({\bf I}-\hat{\bf W}) \times \mathbb R^{md}$. 
 Note that
\begin{align*}
& \big\Vert{\mathcal A}\left({\bf z}, {\bf c}_{1}, {\bf c} \right) - {\mathcal A}\left({\bf z}, {\bf c}_{1}, {\bf c}'\right)  \big\Vert 
= \Vert\hat{\bf W}^{-1}({\bf c} - {\bf c}')\Vert_2
\\&\quad\leq  \nu^{-1}\Vert {\bf c} - {\bf c}'\Vert_2,
\end{align*}
and ${\mathcal A}({\bf z}, {\bf c}^1, {\bf c}^2)$ is constant with respect to ${\bf c}^1$.
It follows that Assumption \ref{assump:Lipt_A} holds with $L_A =\nu^{-1}$.

We next derive $L_C$ and $L_Z$. Using the non-expansive property of the proximal mapping, it holds
\begin{align*}
&\Vert \mathcal C^1({\bf z}, {\bf 0})- \mathcal C^1({\bf z}', {\bf 0}) \Vert_2 = 
\Vert ({\rm prox}_{\STEP{\bf I}, r}({\bf w}) - {\rm prox}_{\STEP{\bf I}, r}({\bf w}')) 
\\&
\quad- \STEP (\nabla f({\rm prox}_{\STEP{\bf I}, r}({\bf w}))-\nabla f({\rm prox}_{\STEP{\bf I}, r}({\bf w}')))\Vert_2 \\
&\leq (1+\STEP L) \Vert {\rm prox}_{\STEP{\bf I}, r}({\bf w})-{\rm prox}_{\STEP{\bf I}, r}({\bf w}')\Vert_2
\\&
\leq (1+\STEP L) \Vert {\bf w}-{\bf w}'\Vert_2
\leq (1+\STEP L) \Vert {\bf z}-{\bf z}'\Vert_2, \\
&\Vert \mathcal C^2({\bf z}, {\bf c} )- \mathcal C^2({\bf z}', {\bf c}' )\Vert_2
=\Vert
\hat{\bf W}({\bf c}-{\bf c}') - ({\bf y}-{\bf y}')\Vert_2
\\&
\leq \Vert \hat{\bf W} ({\bf c}-{\bf c}')\Vert_2 + \Vert {\bf y}-{\bf y}'\Vert_2 
\leq  \Vert {\bf c}-{\bf c}'\Vert_2 + \Vert {\bf z}-{\bf z}'\Vert_2,
\end{align*}
which implies that Assumption \ref{assump:Lipt_C} holds with $L_C = 1$ and $L_Z = 1+\STEP L$. Since $\STEP\leq 2/L$, it follows that $L_Z = \mathcal O(1)$. For the initial conditions, 
using ${\bf x}^{\infty}={\rm prox}_{\STEP{\bf I}, r}({\bf w}^{\infty})$ for the fixed point,
we have
\begin{align*}
&\Vert\mathcal C^1\big({\bf z}^{0}, {\bf 0}\big) \Vert_2
\leq
\Vert {\rm prox}_{\STEP{\bf I}, r}({\bf w}^{0})-{\rm prox}_{\STEP{\bf I}, r}({\bf w}^{\infty})+{\bf x}^{\infty}\Vert_2
\\&
+\STEP\Vert\nabla f({\rm prox}_{\STEP{\bf I}, r}({\bf w}^{0}))
-\nabla f({\rm prox}_{\STEP{\bf I}, r}({\bf w}^{\infty}))
+\nabla f({\bf x}^{\infty})\Vert_2
\\
&\leq
\Vert {\bf w}^{0}-{\bf w}^{\infty}\Vert_2+\Vert {\bf x}^{\infty}\Vert_2
+\STEP L\Vert{\bf w}^{0}-{\bf w}^{\infty}\Vert_2\\&
\quad+\STEP L\Vert{\bf x}^{0}-\tilde{\bf x}^{*}\Vert_2
= \mathcal O(\sqrt{\m d})
\end{align*}
 and $\Vert{\mathcal C}^{2}({\bf z}^{0}, {\bf 0})\Vert_2 \leq \Vert {\bf y}^0 - {\bf y}^\infty\Vert_2 +  \Vert {\bf y}^\infty\Vert_2 = \mathcal O(\sqrt{\m d})$.

Therefore, $L_A\cdot L_Z=\mathcal O(1),L_C = \mathcal O(1),\Vert{\mathcal C}^{1}({\bf z}^{0}, {\bf 0})\Vert_2=\mathcal O(L_Z\sqrt{\m d})$, and $\Vert{\mathcal C}^{2}({\bf z}^{0}, {\bf 0})\Vert_2=\mathcal O(L_Z\sqrt{\m d})$; hence Assumption \ref{assump:commcost} holds.

\subsection{NIDS \cite{Li2019}}
\label{subsec:NIDS}
The update of NIDS solving \eqref{eq:P} with $r\equiv0$, reads
\begin{align*}
    {\bf x}^{k+1} &= {\bf x}^k - \STEP\nabla f({\bf x}^k)\\& \quad- \STEP\Big[  {\bf y}^{k}+c({\bf I}-\check{\bf W})\big({\bf x}^k-\STEP\nabla f({\bf x}^k) - \STEP{\bf y}^k\big)\Big], \\
    {\bf y}^{k+1} &= {\bf y}^{k}+c({\bf I}-\check{\bf W})\big({\bf x}^k-\STEP\nabla f({\bf x}^k) - \STEP{\bf y}^k\big),
\end{align*}
with ${\bf x}^0 \in \mathbb R^{md}$ and ${\bf y}^0 = {\bf 0}$, where the step-size satisfies $\STEP < 2/L$, 
$\check{\bf W} = \tilde{\bf W} \otimes {\bf I}$, $\tilde{\bf W}$ is a doubly stochastic weight matrix satisfying the communication topology, and $c >0$ satisfies $c\STEP({\bf I} - \tilde{\bf W}) \preceq {\bf I}$.

NIDS \cite{Li2019} can be cast as \eqref{eq:M} with $\q=1$ round of communications, using the following:
\begin{align}
&{\bf z}^\top = [{\bf x}^\top, \STEP{\bf y}^\top], \nonumber \\
&\mathcal C_i^1({\bf z}_i^k, {\bf 0}) = {\bf x}_i^k - \STEP\nabla f_i({\bf x}_i^k) - \STEP {\bf y}_i^k, \label{eq:NIDS_c1} \\
&{\bf z}_i^{k+1} = \mathcal A_i({\bf z}_i^k, \hat{\bf c}_{\mathcal N_i}^{k, 1}) 
 \label{eq:NIDS_A}\\&
= \left[ 
\begin{array}{*{20}c}
{\bf x}_i^k - \STEP\nabla f_i({\bf x}_i^k)
 - \STEP{\bf y}_i^{k}
  - \STEP c\sum_{j\in\mathcal N_i}\tilde w_{ij}(\hat{\bf c}_i^{k, 1} - \hat{\bf c}_j^{k, 1})
  \\
\STEP{\bf y}_i^{k}+ \STEP c\sum_{j\in\mathcal N_i}\tilde w_{ij}(\hat{\bf c}_i^{k, 1} - \hat{\bf c}_j^{k, 1})
\end{array}
\right]. \nonumber
\end{align}

We now show that the above instance of \eqref{eq:M} satisfies Assumptions \ref{assump:R_conv_z}, \ref{assump:Lipt_A}, \ref{assump:Lipt_C}, and \ref{assump:commcost}.

$\bullet$ \textbf{On Assumption 1:} Using \cite{Li2019} it is not difficult to check that, if $\STEP = 1/L$ and $c =\frac{1}{2\STEP}$, then NIDS satisfies Assumption~\ref{assump:R_conv_z} with
\begin{align*}
    \RTUQ = \max\Big\{\sqrt{1- {\kappa}^{-1}}, 
    \sqrt{\frac{1+\rho_{2}(\tilde{\bf W})}{2}} \Big\},
\end{align*}
and the norm $\|\bullet\|$ defined as 
\begin{align*}
\Vert{\bf z}\Vert^2 = \Vert {\bf x}\Vert_2^2 + \frac{1}{\STEP}\Vert\STEP{\bf y}\Vert_{c^{-1}({\bf I}-\check{\bf W})^\dagger}^2,
\end{align*}
which satisfies $\Vert{\bf z}\Vert^2 \geq \Vert{\bf z}\Vert_2^2$.
 Note that, with  $\mathbf{W}$ defined as in \eqref{eq:W_from_W_tilde}, we can further bound
\begin{align*}
    \RTUQ \leq\max\Big\{\sqrt{1- {\kappa}^{-1}}, 
    \sqrt{\rho_{2}({\bf W})} \Big\}<1.
\end{align*}

$\bullet$ \textbf{On Assumptions \ref{assump:Lipt_A}, \ref{assump:Lipt_C}, and \ref{assump:commcost}:} Based on \eqref{eq:NIDS_c1} and \eqref{eq:NIDS_A}, the mappings $\mathcal A$ and $\mathcal C$ read
\begin{align*}
    &\mathcal A({\bf z}, {\bf c}^1) = \left[
\begin{array}{*{20}c}
{\bf x} - \STEP \nabla f({\bf x}) - \STEP{\bf y} -\STEP c({\bf I} - \check{\bf W}){\bf c}^1   \\
\STEP{\bf y} +\STEP c({\bf I} - \check{\bf W}){\bf c}^1
\end{array}
\right] \quad \text{and} \\
&\mathcal C^1({\bf z}, {\bf 0}) = {\bf x} - \STEP \nabla f({\bf x}) - \STEP {\bf y},
\end{align*}
respectively; and $\mathcal Z = \mathbb R^{md} \times \mathrm{span}({\bf I}-\check{\bf W})$. 
Note that
\begin{align*}
& \big\Vert{\mathcal A}\left({\bf z}, {\bf c} \right) - {\mathcal A}\left({\bf z}, {\bf c}'\right)  \big\Vert^2 
= (c\STEP)^2 ({\bf c} - {\bf c}')^\top({\bf I} - \check{\bf W})^2({\bf c} - {\bf c}') 
\\&
 + c\STEP ({\bf c} - {\bf c}')^\top({\bf I} - \check{\bf W})({\bf c} - {\bf c}') \leq 2\Vert {\bf c} - {\bf c}'\Vert_2^2.
\end{align*}
It follows that Assumption \ref{assump:Lipt_A} holds with $L_A = \sqrt{2}$.

We next derive $L_C$ and $L_Z$. Using the non-expansive property of the proximal mapping, it holds
\begin{align*}
&\Vert \mathcal C^1({\bf z}, {\bf 0})- \mathcal C^1({\bf z}', {\bf 0}) \Vert_2 \\
&= \Vert ({\bf x} - {\bf x}') - \STEP[\nabla f({\bf x})-\nabla f({\bf x}')] - \STEP({\bf y} - {\bf y}')\Vert_2 \\
&\leq 
\Vert {\bf x} - {\bf x}'\Vert_2
+\Vert \STEP{\bf y} - \STEP{\bf y}'\Vert_2
+\STEP L\Vert {\bf x}-{\bf x}'\Vert_2
\\&
\leq 
(\sqrt{2}+\STEP L)\Vert {\bf z} - {\bf z}'\Vert_2,
\end{align*}
which implies that Assumption \ref{assump:Lipt_C} holds with $L_C = 1$ and $L_Z = \sqrt{2}+\STEP L$. Since $\STEP =1/L$, it follows that $L_Z = \mathcal O(1)$. For the initial conditions, we have 
\begin{align*}
&\Vert{\mathcal C}^{1}({\bf z}^{0}, {\bf 0})\Vert_2
\leq
\Vert{\bf x}^0-{\bf x}^\infty+{\bf x}^\infty\Vert_2
+\STEP\Vert\nabla f({\bf x}^0)-\nabla f(\tilde{\bf x}^*)\Vert_2
\\&
\leq
\Vert{\bf x}^0-{\bf x}^\infty\Vert_2
+\Vert{\bf x}^\infty\Vert_2
+\STEP L\Vert{\bf x}^0-\tilde{\bf x}^*\Vert_2= \mathcal O(\sqrt{\m d})
.\end{align*}
Therefore, $L_A\cdot L_Z=\mathcal O(1),L_C = \mathcal O(1),\Vert{\mathcal C}^{1}({\bf z}^{0}, {\bf 0})\Vert_2=\mathcal O(L_Z\sqrt{\m d})$; hence Assumption \ref{assump:commcost} holds.

\subsection{(Prox-)NEXT \cite{Xu2020}}
\label{subsec:ABC_NEXT}
The update of prox-NEXT solving \eqref{eq:P} reads
\begin{align*}
{\bf x}^{k} &= {\rm prox}_{\STEP{\bf I}, r}({\bf w}^{k}), \\
{\bf w}^{k+1} &= \hat{\bf W}^2\big({\bf x}^{k}-\STEP \nabla f({\bf x}^{k})\big)-{\bf y}^{k},\\
{\bf y}^{k+1} &= {\bf y}^{k}+({\bf I} - \hat{\bf W})^2{\bf w}^{k+1}, 
\end{align*}
with ${\bf y}^0 = {\bf 0}$ and ${\bf w}^0 \in \mathbb R^{\m d}$.

Prox-NEXT can be cast as \eqref{eq:M} with $\q=4$ rounds of communications, using the following definitions:
\begin{align}
&{\bf z}^\top=[{\bf y}^\top,{\bf w}^\top], \nonumber \\
&\hat{\bf c}_{i}^{k,1}{=}
\mathcal C_{i}^1\big({\bf z}_i^{k}, {\bf 0}\big) 
{=}{\rm prox}_{\STEP{\bf I}, r}({\bf w}_i^{k}){-}\STEP \nabla f_i({\rm prox}_{\STEP{\bf I}, r}({\bf w}_i^{k})), \label{eq:p_NEXT_c1} \\
&\hat{\bf c}_{i}^{k,2} = 
\mathcal C_{i}^2\big({\bf z}_i^{k}, \hat{\bf c}_{\mathcal N_i}^{k,1} \big) 
= \sum_{j \in \mathcal N_i}w_{ij}\hat{\bf c}_{j}^{k,1}, \label{eq:p_NEXT_c2} \\
&\hat{\bf c}_{i}^{k,3} = 
\mathcal C_{i}^3\big({\bf z}_i^{k}, \hat{\bf c}_{\mathcal N_i}^{k,2}\big) 
= \sum_{j \in \mathcal N_i}w_{ij}\hat{\bf c}_{j}^{k,2} - {\bf y}_i^{k}, \label{eq:p_NEXT_c3} \\
&\hat{\bf c}_{i}^{k,4} = 
\mathcal C_{i}^4\big({\bf z}_i^{k}, \hat{\bf c}_{\mathcal N_i}^{k,3}\big) 
= \sum_{j \in \mathcal N_i}w_{ij}\big(\hat{\bf c}_{i}^{k,3} - \hat{\bf c}_{j}^{k,3}\big), \label{eq:p_NEXT_c4} \\
&{\bf z}_i^{k+1} 
= \mathcal A_i\big(
{\bf z}_i^{k}, \hat{\bf c}_{{\mathcal N}_i}^{k,1},\hat{\bf c}_{{\mathcal N}_i}^{k,2}, \hat{\bf c}_{{\mathcal N}_i}^{k,3},\hat{\bf c}_{{\mathcal N}_i}^{k,4} \big) 
\nonumber\\&
\qquad= \left[
\begin{array}{*{20}c}
{\bf y}_i^{k} + \sum_{j \in \mathcal N_i} w_{ij} \big(\hat{\bf c}_{i}^{k,4} - \hat{\bf c}_{j}^{k,4}\big)  \\
\hat{\bf c}_{i}^{k,3}
\end{array}
\right]. \label{eq:p_NEXT_A} 
\end{align}
We now show that the above instance of \eqref{eq:M} satisfies Assumptions \ref{assump:R_conv_z}, \ref{assump:Lipt_A}, \ref{assump:Lipt_C}, and \ref{assump:commcost}.

$\bullet$ \textbf{On Assumption 1:} Using \cite[Theorem 18]{Xu2020} it is not difficult to check that, if $\STEP= 2/(\mu+L)$, then prox-NEXT satisfies Assumption~\ref{assump:R_conv_z} with 
\begin{align*}
    \RTUQ = \max\Big\{\frac{\kappa-1}{\kappa+1}, \sqrt{1-(1-\rho_{2}\big({\bf W}\big))^2}\Big\} < 1,
\end{align*}
and  the norm $\|\bullet\|$ defined as 
\begin{align*}
\Vert{\bf z}\Vert^2 =
{\bf y}\hat{\bf W}^{-2}\big(({\bf I}-\hat{\bf W})^2\big)^\dagger\hat{\bf W}^{-2}{\bf y}
+ \big\Vert \hat{\bf W}^{-2}{\bf w}\big\Vert_{{\bf I} - ({\bf I} - \hat{\bf W})^2}^2.
\end{align*}
Note that $\Vert{\bf z}\Vert^2 \geq \Vert{\bf z}\Vert^2_2$.

$\bullet$ \textbf{On Assumptions \ref{assump:Lipt_A}, \ref{assump:Lipt_C}, and \ref{assump:commcost}:} Based on \eqref{eq:p_NEXT_c1}-\eqref{eq:p_NEXT_A}, the mappings $\mathcal A$ and $\mathcal C$ read
\begin{align*}
    &\mathcal A({\bf z}, {\bf c}^1, {\bf c}^2, {\bf c}^3, {\bf c}^4) = \left[
\begin{array}{*{20}c}
{\bf y} + ({\bf I} - \hat{\bf W}) {\bf c}^{4}  \\
{\bf c}^{3}
\end{array}
\right], \\
&\mathcal C^1({\bf z}, {\bf 0}){=}{\rm prox}_{\STEP{\bf I}, r}({\bf w}) {-}\STEP \nabla f({\rm prox}_{\STEP{\bf I}, r}({\bf w})),\  
\mathcal C^2({\bf z}, {\bf c}){=}\hat{\bf W}{\bf c}, \\
&\mathcal C^3({\bf z}, {\bf c}) = \hat{\bf W}{\bf c} - {\bf y}, \quad \text{and} \quad
\mathcal C^4({\bf z}, {\bf c}) = ({\bf I} - \hat{\bf W}){\bf c},
\end{align*}
respectively; and $\mathcal Z = \mathrm{span}({\bf I}-\hat{\bf W}) \times \mathbb R^{\m d}$.
Note that
\begin{align*}
& \Vert{\mathcal A}({\bf z}, {\bf c}^1, {\bf c}^2,{\bf c}, {\bf c}^4 )
- {\mathcal A}({\bf z}, {\bf c}^1, {\bf c}^2,{\bf c}', {\bf c}^4)\Vert^2 \\ 
&= ({\bf c}-{\bf c}')^\top\hat{\bf W}^{-2}[2\hat{\bf W}^{-1}-{\bf I}] ({\bf c}-{\bf c}')\\&
\leq \nu^{-2}(2\nu^{-1}-1) \Vert {\bf c}-{\bf c}'\Vert_2^2
\leq
\nu^{-4}\Vert {\bf c}-{\bf c}'\Vert_2^2
\end{align*}
and
\begin{align*}
& \big\Vert{\mathcal A}\left({\bf z}, {\bf c}_{1}, {\bf c}_{2}, {\bf c}_{3}, {\bf c}\right) 
- {\mathcal A}\left({\bf z}, {\bf c}_{1}, {\bf c}_{2}, {\bf c}_{3}, {\bf c}'\right)  \big\Vert 
\\&
=  \Vert \hat{\bf W}^{-2}({\bf c}-{\bf c}')\Vert_2 
\leq \nu^{-2}\Vert {\bf c}-{\bf c}'\Vert_2.
\end{align*}
Moreover, ${\mathcal A}({\bf z}, {\bf c}^1, {\bf c}^2, {\bf c}^3, {\bf c}^4 )$ is constant with respect to ${\bf c}^1, {\bf c}^2$.
Therefore,  Assumption \ref{assump:Lipt_A} holds with $L_A =\nu^{-2}$.

We next derive $L_C$ and $L_Z$. Using the non-expansive property of the proximal operator, it follows that
\begin{align*}
&\Vert \mathcal C^1({\bf z},{\bf 0})- \mathcal C^1({\bf z}',{\bf 0}) \Vert_2 = \Vert ({\rm prox}_{\STEP{\bf I}, r}({\bf w}) - {\rm prox}_{\STEP{\bf I}, r}({\bf w}'))\\& \quad- \STEP (\nabla f({\rm prox}_{\STEP{\bf I}, r}({\bf w}))-\nabla f({\rm prox}_{\STEP{\bf I}, r}({\bf w}')))\Vert_2 \\
&\leq (1+\STEP L) \Vert {\rm prox}_{\STEP{\bf I}, r}({\bf w})-{\rm prox}_{\STEP{\bf I}, r}({\bf w}')\Vert_2
\\&
\leq (1+\STEP L) \Vert {\bf w}-{\bf w}'\Vert_2
\leq (1+\STEP L) \Vert {\bf z}-{\bf z}'\Vert_2, \\
&\Vert \mathcal C^2({\bf c},{\bf z})- \mathcal C^2({\bf c}',{\bf z}')\Vert_2
= \Vert \hat{\bf W} ({\bf c}-{\bf c}')\Vert_2
\leq \Vert {\bf c}-{\bf c}'\Vert_2, \\
&\Vert \mathcal C^3({\bf c},{\bf z})- \mathcal C^3({\bf c}',{\bf z}')\Vert_2
\leq \Vert \hat{\bf W} ({\bf c}-{\bf c}')\Vert_2 + \Vert {\bf y}-{\bf y}'\Vert_2 
\\&
\qquad\leq  \Vert {\bf c}-{\bf c}'\Vert_2 + \Vert {\bf z}-{\bf z}'\Vert_2, \\
&\Vert \mathcal C^4({\bf c},{\bf z})- \mathcal C^4({\bf c}',{\bf z}')\Vert_2
= \Vert ({\bf I}-\hat{\bf W}) ({\bf c}-{\bf c}')\Vert_2
\leq \Vert {\bf c}-{\bf c}'\Vert_2,
\end{align*}
which implies that Assumption \ref{assump:Lipt_C} holds with $L_C = 1$ and $L_Z = 1+\STEP L$. Since $\STEP\leq 2/L$, it follows that $L_Z = \mathcal O (1)$. For the initial conditions,
using the fixed point ${\bf x}^\infty={\rm prox}_{\STEP{\bf I}, r}({\bf w}^\infty)$,
 we have 
\begin{align*}
&\Vert{\mathcal C}^{1}({\bf z}^{0}, {\bf 0})\Vert_2\leq
\Vert{\rm prox}_{\STEP{\bf I}, r}({\bf w}^0)-{\rm prox}_{\STEP{\bf I}, r}({\bf w}^\infty)+{\bf x}^\infty\Vert_2
\\&
{+}\STEP \Vert\nabla f({\rm prox}_{\STEP{\bf I}, r}({\bf w}^0))
{-}\nabla f({\rm prox}_{\STEP{\bf I}, r}({\bf w}^\infty))
{+}\nabla f({\bf x}^\infty)
\Vert_2
\\&
{\leq}
(1{+}\STEP L)\Vert{\bf w}^0{-}{\bf w}^\infty\Vert_2
{+}\Vert{\bf x}^\infty\Vert_2
{+}\STEP L\Vert{\bf x}^\infty{-}\tilde{\bf x}^*\Vert_2
{=}\mathcal O(\sqrt{\m d})
\end{align*}
 and 
 $\Vert{\mathcal C}^{2}({\bf z}^{0}, {\bf 0})\Vert_2=\Vert{\mathcal C}^{3}({\bf z}^{0}, {\bf 0})\Vert_2=\Vert{\mathcal C}^{4}({\bf z}^{0}, {\bf 0})\Vert_2 =0$.

Therefore, $L_A\cdot L_Z=\mathcal O(1),L_C = \mathcal O(1)$, $\Vert{\mathcal C}^{i}({\bf z}^{0}, {\bf 0})\Vert_2=\mathcal O(L_Z\sqrt{\m d}),\forall i=1,\dots, 4$; hence Assumption \ref{assump:commcost} holds.

\subsection{(Prox-)DIGing \cite{Xu2020}}
\label{subsec:ABC_DIGing}
The update of prox-DIGing solving \eqref{eq:P}, reads
\begin{align*}
{\bf x}^{k} &= {\rm prox}_{\STEP{\bf I}, r}({\bf w}^{k}), \\
{\bf w}^{k+1} &= \hat{\bf W}^2{\bf x}^{k}-\STEP \nabla f({\bf x}^{k})-{\bf y}^{k},\\
{\bf y}^{k+1} &= {\bf y}^{k}+({\bf I} - \hat{\bf W})^2{\bf w}^{k+1}, 
\end{align*}
with ${\bf y}^0 = {\bf 0}$ and ${\bf w}^0 \in \mathbb R^{\m d}$.

Prox-DIGing can be cast as \eqref{eq:M} with $\q=4$ rounds of communications, using the following definitions:
\begin{align}
&{\bf z}^\top=[{\bf y}^\top,{\bf w}^\top], \nonumber\\
&\hat{\bf c}_{i}^{k,1} = 
\mathcal C_{i}^1\big({\bf z}_i^k, {\bf 0}\big) 
= {\rm prox}_{\STEP{\bf I}, r}({\bf w}_i^{k}), \label{eq:p_DIGing_c1} \\
&\hat{\bf c}_{i}^{k,2} = 
\mathcal C_{i}^2\big({\bf z}_i^k, \hat{\bf c}_{\mathcal N_i}^{k,1}\big)
= \sum_{j \in \mathcal N_i}w_{ij}\hat{\bf c}_{j}^{k,1}, \label{eq:p_DIGing_c2} \\
&\hat{\bf c}_{i}^{k,3} = 
\mathcal C_{i}^3\big({\bf z}_i^k, \hat{\bf c}_{\mathcal N_i}^{k,2}\big) \nonumber\\&\qquad
= \sum_{j \in \mathcal N_i}w_{ij}\hat{\bf c}_{j}^{k,2} - \STEP \nabla f_i({\rm prox}_{\STEP{\bf I}, r}({\bf w}_i^{k})) - {\bf y}_i^{k}, \label{eq:p_DIGing_c3} \\
&\hat{\bf c}_{i}^{k,4} = 
\mathcal C_{i}^4\big({\bf z}_i^k, \hat{\bf c}_{\mathcal N_i}^{k,3}\big)
=\sum_{j \in \mathcal N_i}w_{ij}\big(\hat{\bf c}_{i}^{k,3} - \hat{\bf c}_{j}^{k,3}\big),\label{eq:p_DIGing_c4} \\
&{\bf z}_i^{k+1} 
= \mathcal A_i\big(
{\bf z}_i^{k}, \hat{\bf c}_{\mathcal N_i}^{k,1},\hat{\bf c}_{\mathcal N_i}^{k,2}, \hat{\bf c}_{\mathcal N_i}^{k,3},\hat{\bf c}_{\mathcal N_i}^{k,4} \big) 
\nonumber\\&\qquad
= \left[
\begin{array}{*{20}c}
{\bf y}_i^{k} + \sum_{j \in \mathcal N_i} w_{ij} \big(\hat{\bf c}_{i}^{k,4} - \hat{\bf c}_{j}^{k,4}\big)  \\
\hat{\bf c}_{i}^{k,3}
\end{array}
\right]. \label{eq:p_DIGing_A}
\end{align}
We now show that the above instance of \eqref{eq:M} satisfies Assumptions \ref{assump:R_conv_z}, \ref{assump:Lipt_A}, \ref{assump:Lipt_C}, and \ref{assump:commcost}.

$\bullet$ \textbf{On Assumption 1:} Using \cite[Theorem 18]{Xu2020}
and following similar derivations as for (Prox-)EXTRA,
 it is not difficult to check that, if {$\STEP = \frac{2(\rho_{m}({\bf W}))^2}{L + \mu(\rho_{m}({\bf W}))^2}$ and
$\nu=\sqrt{\frac{\kappa}{\kappa+1}}$}, then prox-DIGing satisfies Assumption~\ref{assump:R_conv_z} with 
\begin{align*}
    &\RTUQ = \max\Big\{\frac{\kappa-(\rho_{m}({\bf W}))^2}{\kappa+(\rho_{m}({\bf W}))^2}\frac{1}{\sqrt{\rho_{m}({\bf W})(2-\rho_{m}({\bf W}))}}, \\&
    \qquad\qquad\sqrt{1-(1-\rho_{2}\big({\bf W}\big))^2}\Big\}
    \\&\leq
    \max\Big\{
\frac{\kappa}{\kappa+1}, \sqrt{1-(1-\rho_{2}\big({\bf W}\big))^2}\Big\}
     < 1,
\end{align*}
(note that $\rho_m({\bf W})\geq\nu$)
and the norm $\|\bullet\|$ defined as 
\begin{align*}
\Vert{\bf z}\Vert^2 = \frac{1}{2\nu - \nu^2}\Big({\bf y}^\top\big(({\bf I}-\hat{\bf W})^2\big)^\dagger{\bf y}+\left\Vert {\bf w}\right\Vert_{{\bf I} - ({\bf I} - \hat{\bf W})^2}^2\Big).
\end{align*}
Note that $\Vert{\bf z}\Vert^2 \geq \Vert{\bf z}\Vert^2_2$.

$\bullet$ \textbf{On Assumptions \ref{assump:Lipt_A}, \ref{assump:Lipt_C}, and \ref{assump:commcost}:} Based on \eqref{eq:p_DIGing_c1}-\eqref{eq:p_DIGing_A}, the mappings $\mathcal A$ and $\mathcal C$ read
\begin{align*}
    &\mathcal A({\bf z}, {\bf c}^1, {\bf c}^2, {\bf c}^3, {\bf c}^4) = \left[
\begin{array}{*{20}c}
{\bf y} + ({\bf I} - \hat{\bf W}) {\bf c}^{4}  \\
{\bf c}^{3}
\end{array}
\right], \\
&\mathcal C^1({\bf z}, {\bf 0}) = {\rm prox}_{\STEP{\bf I}, r}({\bf w}),\quad 
\mathcal C^2({\bf z}, {\bf c}) = \hat{\bf W}{\bf c}, \\
&\mathcal C^3({\bf z}, {\bf c}) = \hat{\bf W}{\bf c}  - \STEP \nabla f({\rm prox}_{\STEP{\bf I}, r}({\bf w})) - {\bf y}, \quad \text{and}\\&
\mathcal C^4({\bf z}, {\bf c}) = ({\bf I} - \hat{\bf W}){\bf c},
\end{align*}
respectively; and $\mathcal Z = \mathrm{span}({\bf I}-\hat{\bf W}) \times \mathbb R^{\m d}$.
Note that
\begin{align*}
 & \big\Vert{\mathcal A}\left({\bf z}, {\bf c}_{1}, {\bf c}_{2}, {\bf c}, {\bf c}^4\right) 
- {\mathcal A}\left({\bf z}, {\bf c}_{1}, {\bf c}_{2}, {\bf c}', {\bf c}^4\right) \big\Vert^2 
\\&=  \frac{1}{2\nu - \nu^2}({\bf c}-{\bf c}')^\top [{\bf I}-({\bf I}-\hat{\bf W})^2] ({\bf c}-{\bf c}')
\leq \frac{\Vert {\bf c}-{\bf c}'\Vert_2^2}{2\nu - \nu^2} ,
\end{align*}
and 
\begin{align*}
\big\Vert{\mathcal A}\left({\bf z}, {\bf c}_{1}, {\bf c}_{2}, {\bf c}_{3}, {\bf c}\right) 
- {\mathcal A}\left({\bf z}, {\bf c}_{1}, {\bf c}_{2}, {\bf c}_{3}, {\bf c}'\right)  \big\Vert^2  
=  \frac{\Vert {\bf c}-{\bf c}'\Vert_2^2}{2\nu - \nu^2}.
\end{align*}
Moreover, ${\mathcal A}({\bf z}, {\bf c}^1, {\bf c}^2, {\bf c}^3, {\bf c}^4 )$ is constant with respect to ${\bf c}^1, {\bf c}^2$.
Therefore,  Assumption \ref{assump:Lipt_A} holds with $L_A{=}1/\sqrt{\nu}{\leq}2^{1/4}$.

We next derive $L_C$ and $L_Z$. We have 
\begin{align*}
&\Vert \mathcal C^1({\bf z}, {\bf 0})- \mathcal C^1({\bf z}', {\bf 0}) \Vert_2
\leq \Vert {\bf w}-{\bf w}'\Vert_2\leq \Vert {\bf z}-{\bf z}'\Vert_2, \\
&\Vert \mathcal C^2({\bf z},{\bf c})- \mathcal C^2({\bf z}',{\bf c}')\Vert_2
= \Vert \hat{\bf W} ({\bf c}-{\bf c}')\Vert_2
\leq  \Vert {\bf c}-{\bf c}'\Vert_2, \\
&\Vert \mathcal C^3({\bf z},{\bf c})- \mathcal C^3({\bf z}',{\bf c}')\Vert_2
\leq \Vert {\bf c}-{\bf c}'\Vert_2
+\STEP L\Vert {\bf w}-{\bf w}'\Vert_2
\\&
\quad + \Vert  {\bf y}-{\bf y}'\Vert_2
\leq \Vert {\bf c}-{\bf c}'\Vert_2+\sqrt{1+(\STEP L)^2}\Vert  {\bf z}-{\bf z}'\Vert_2
, \\
&\Vert \mathcal C^4({\bf z},{\bf c})- \mathcal C^4({\bf z}',{\bf c}')\Vert_2
= \Vert ({\bf I}-\hat{\bf W})({\bf c}-{\bf c}')\Vert_2
\leq \Vert {\bf c}-{\bf c}'\Vert_2,
\end{align*}
which implies that Assumption \ref{assump:Lipt_C} holds with $L_C = 1$ and $L_Z =\sqrt{1+(\STEP L)^2}$.  Since $\STEP\leq 2/L$, it follows that $L_Z = \mathcal O(1)$. For the initial conditions,
using ${\rm prox}_{\STEP{\bf I}, r}({\bf w}^\infty)={\bf x}^\infty$ for the fixed point,
 we have 
\begin{align*}
&\Vert{\mathcal C}^{1}({\bf z}^{0}, {\bf 0})\Vert_2 \leq \Vert {\bf w}^0 - {\bf w}^\infty\Vert_2 + \Vert {\bf x}^\infty\Vert_2 = \mathcal O(\sqrt{\m d}),\\
&\Vert{\mathcal C}^{3}({\bf z}^{0}, {\bf 0})\Vert_2 \leq \STEP L (\Vert {\bf w}^0{-}{\bf w}^\infty\Vert_2{+}\Vert {\bf x}^\infty{-}\tilde{\bf x}^*\Vert_2) 
 = \mathcal O(\sqrt{\m d}),
\end{align*}
and $\Vert{\mathcal C}^{2}({\bf z}^{0}, {\bf 0})\Vert_2=\Vert{\mathcal C}^{4}({\bf z}^{0}, {\bf 0})\Vert_2= 0$.
Therefore, $L_A\cdot L_Z=\mathcal O(1),L_C = \mathcal O(1)$, $\Vert{\mathcal C}^{i}({\bf z}^{0}, {\bf 0})\Vert_2= \mathcal O(L_Z\sqrt{\m d}),\forall i=1,\dots, 4$; hence Assumption \ref{assump:commcost} holds.

\subsection{Primal-Dual algorithm \cite{Magnusson2020, Uribe2021}}
\label{subsec:Dual}
Let ${\bf L} = (l_{ij})_{i,j=1}^m$ be the Laplacian matrix associated with the 0-1 adjacency matrix of $\mathcal G$, i.e., $l_{ii} = \vert \mathcal N_i \setminus \{i\}\vert,   i\in [m]$; and $l_{ij} = -\mathbbm{1}\{(i,j) \in \mathcal E\},   i\neq j\in  [m] $; and $\hat{\bf L} = {\bf L} \otimes {\bf I}_d$. 

The Primal-Dual algorithm solving \eqref{eq:P} with $r \equiv 0$ reads \cite{Magnusson2020, Uribe2021}
\begin{align*}
{\bf x}_i^{k} &= \underset{{\bf x}_i}{\arg\min}\,\, f_i({\bf x}_i) + {\bf x}_i^\top{\bf y}_i^{k},\\
{\bf y}_i^{k+1} &= {\bf y}_i^{k} + \STEP \sum_{j\in\mathcal N_i} l_{ij}{\bf x}_j^{k},
\end{align*}
with ${\bf y}_i^{0} = {\bf 0}$.

The Primal-Dual algorithm can be cast in the form  \eqref{eq:M}, with $\q=1$ round of communications, using  the  following:
\begin{align}
&{\bf z}={\bf y}, \nonumber \\
&\mathcal C_{i}^1\big({\bf z}_i^{k}, {\bf 0}\big) 
= \underset{{\bf x}_i}{\arg\min}\,\, f_i({\bf x}_i) + {\bf x}_i^\top{\bf y}_i^{k}, \label{eq:dual_c1} \\
&{\bf z}_i^{k+1} 
= \mathcal A_i\big(
{\bf z}_i^{k}, \hat{\bf c}_{{\mathcal N}_i}^{k,1} \big)
= {\bf y}_i^{k} + \STEP\cdot \sum_{j\in\mathcal N_i} l_{ij}\hat{\bf c}_{j}^{k,1}. \label{eq:dual_A}
\end{align}
We now show that the above instance of \eqref{eq:M} satisfies Assumptions \ref{assump:R_conv_z}, \ref{assump:Lipt_A}, \ref{assump:Lipt_C}, and \ref{assump:commcost}.

$\bullet$ \textbf{On Assumption 1:} Define ${\bf M} = \sqrt{\bf \Sigma}{\bf Q}$, where $\hat{\bf L} = {\bf Q}^\top {\bf \Sigma}{\bf Q}$ is the eigenvalue decomposition of $\hat{\bf L}$, with ${\bf \Sigma}$ being diagonal with elements sorted in descending order; and let  $\bar{\bf M}$ be the matrix containing the non-zero rows of ${\bf M}$. Using \cite{Magnusson2020} it is not difficult to check that, if $\STEP = \frac{2L\mu}{\mu\rho_{m-1}({\bf L}) + L\rho_{1}({\bf L})}$, the Primal-Dual algorithm satisfies Assumption \ref{assump:R_conv_z} with 
\begin{align*}
\RTUQ = \dfrac{\frac{\rho_1({\bf L})}{\rho_{m-1}({\bf L})}-\frac{1}{\kappa}}{\frac{\rho_1({\bf L})}{\rho_{m-1}({\bf L})}+\frac{1}{\kappa}} < 1,
\end{align*}
and  the norm $\|\bullet\|$ defined as 
\begin{align*}
\Vert{\bf z}\Vert = \sqrt{\rho_1({\bf L})}\left\Vert \left(\bar{\bf M}\bar{\bf M}^\top\right)^{-1}\bar{\bf M} {\bf z}\right\Vert_{2}.
\end{align*}
Note that $\Vert{\bf z}\Vert\geq \Vert{\bf z}\Vert_2$.

$\bullet$ \textbf{On Assumptions \ref{assump:Lipt_A}, \ref{assump:Lipt_C}, and \ref{assump:commcost}:} Based on \eqref{eq:dual_c1} and \eqref{eq:dual_A}, the mappings $\mathcal A$ and $\mathcal C$ read
\begin{align*}
    \mathcal A({\bf z}, {\bf c}) {=} {\bf z}{+} \STEP\cdot\hat{\bf L}{\bf c}, \ 
\mathcal C^1({\bf z}, {\bf 0}) {=} \left[ 
\begin{array}{c}
     \!\!\underset{{\bf x}}{\arg\min}f_1({\bf x}){+}{\bf x}^\top{\bf z}_1\!\!  \\
     \vdots \\
     \!\!\underset{{\bf x}}{\arg\min}f_{\m}({\bf x}){+}{\bf x}^\top{\bf z}_{\m}\!\!
\end{array}
\right]\!\!,
\end{align*}
respectively; and $\mathcal Z = \mathrm{span}({\bf L})$.  Note that
\begin{align*}
\mathcal A\left({\bf z}, {\bf c}\right) - \mathcal A\left({\bf z}, {\bf c}'\right)
= \STEP \hat{\bf L}\left({\bf c} - {\bf c}'\right).
\end{align*}
It follows that
\begin{align*}
&\big\Vert\mathcal A\left({\bf z}, {\bf c}\right){-}\mathcal A\left({\bf z}, {\bf c}'\right)\big\Vert
{=}\STEP\sqrt{ \rho_1({\bf L}) }\left\Vert\bar{\bf M}\left({\bf c}{-}{\bf c}'\right)\right\Vert_2 \\
&\leq  \STEP\rho_1\left({\bf L}\right)\left\Vert{\bf c} - {\bf c}'\right\Vert_2,
\end{align*}
which implies that Assumption \ref{assump:Lipt_A} holds with $L_A =\STEP\rho_1({\bf L})$. 
We now derive $L_C$ and $L_Z$. Since
$\mathcal C_{i}^1\big({\bf z}_i, {\bf 0}\big) = \underset{{\bf x}_i}{\arg\min}\,\, f_i({\bf x}_i) + {\bf x}_i^\top{\bf z}_i$,
it follows that
${\bf z}_i=-\nabla f_i(\mathcal C_{i}^1\big({\bf z}_i, {\bf 0}\big))$,
and strong convexity of $f_i$ implies
$$\Vert{\bf z}_i'-{\bf z}_i\Vert_2\geq \mu
\Vert\mathcal C_{i}^1\big({\bf z}_i', {\bf 0}\big)-\mathcal C_{i}^1\big({\bf z}_i, {\bf 0}\big)\Vert_2.$$
It readily follows that
 Assumption \ref{assump:Lipt_C} holds with $L_C = 0$ and $L_Z =1/\mu$. For the initial conditions, since ${\bf z}^0 = {\bf 0}$ we have 
 $ \mathcal C^1({\bf z}^0, {\bf 0})=\tilde{\bf x}^*$ and 
$\Vert \mathcal C^1({\bf z}^0, {\bf 0})\Vert_2 = \Vert \tilde{\bf x}^*\Vert_2 = \mathcal O(\sqrt{m d})$.

From the expression of $\gamma$, it is straightforward to see that
$\STEP \leq \frac{2\mu}{\rho_{1}({\bf L})}$, so that $L_A\cdot L_Z=\mathcal O(1),L_C = \mathcal O(1)$.
Furthermore, for all the objective functions of \eqref{eq:P} such that  $\mu = \mathcal O(1)$, we also have 
 $\Vert{\mathcal C}^{1}({\bf z}^{0}, {\bf 0})\Vert_2=\mathcal O(L_Z\sqrt{\m d})$,
 hence Assumption~\ref{assump:commcost} holds.
 For instance, this is the typical case in  machine learning problems where a regularization $\mu/2 \|\mathbf{x}\|^2$ is added to the objective function to enforce strong convexity, with $\mu = \mathcal O(1)$.

\begin{IEEEbiography}[{\includegraphics[width=1in,height=1.25in,clip,keepaspectratio]{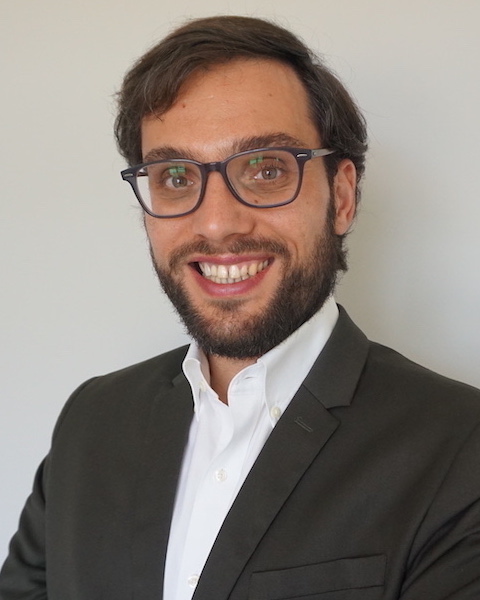}}]{Nicol\`{o} Michelusi}
(Senior Member, IEEE) received the B.Sc. (with honors), M.Sc. (with honors), and Ph.D. degrees from the University of Padova, Italy, in 2006, 2009, and 2013, respectively, and the M.Sc. degree in telecommunications engineering from the Technical University of Denmark, Denmark, in 2009, as part of the T.I.M.E. double degree program. From 2013 to 2015, he was a Postdoctoral Research Fellow with the Ming-Hsieh Department of Electrical Engineering, University of Southern California, Los Angeles, CA, USA, and from 2016 to 2020, he was an Assistant Professor with the School of Electrical and Computer Engineering, Purdue University, West Lafayette, IN, USA. He is currently an Assistant Professor with the School of Electrical, Computer and Energy Engineering, Arizona State University, Tempe, AZ, USA. His research interests include 5G wireless networks, millimeter-wave communications, stochastic optimization, distributed optimization, and federated learning over wireless. He served as Associate Editor for the IEEE TRANSACTIONS ON WIRELESS COMMUNICATIONS from 2016 to 2021, and a Reviewer for several IEEE journals. He was the Co-Chair for the Distributed Machine Learning and Fog Network workshop at the IEEE INFOCOM 2021, the Wireless Communications Symposium at the IEEE Globecom 2020, the IoT, M2M, Sensor Networks, and Ad-Hoc Networking track at the IEEE VTC 2020, and the Cognitive Computing and Networking symposium at the ICNC 2018. He received the NSF CAREER award in 2021.
\end{IEEEbiography}

\begin{IEEEbiography}[{\includegraphics[width=1.1in,keepaspectratio]{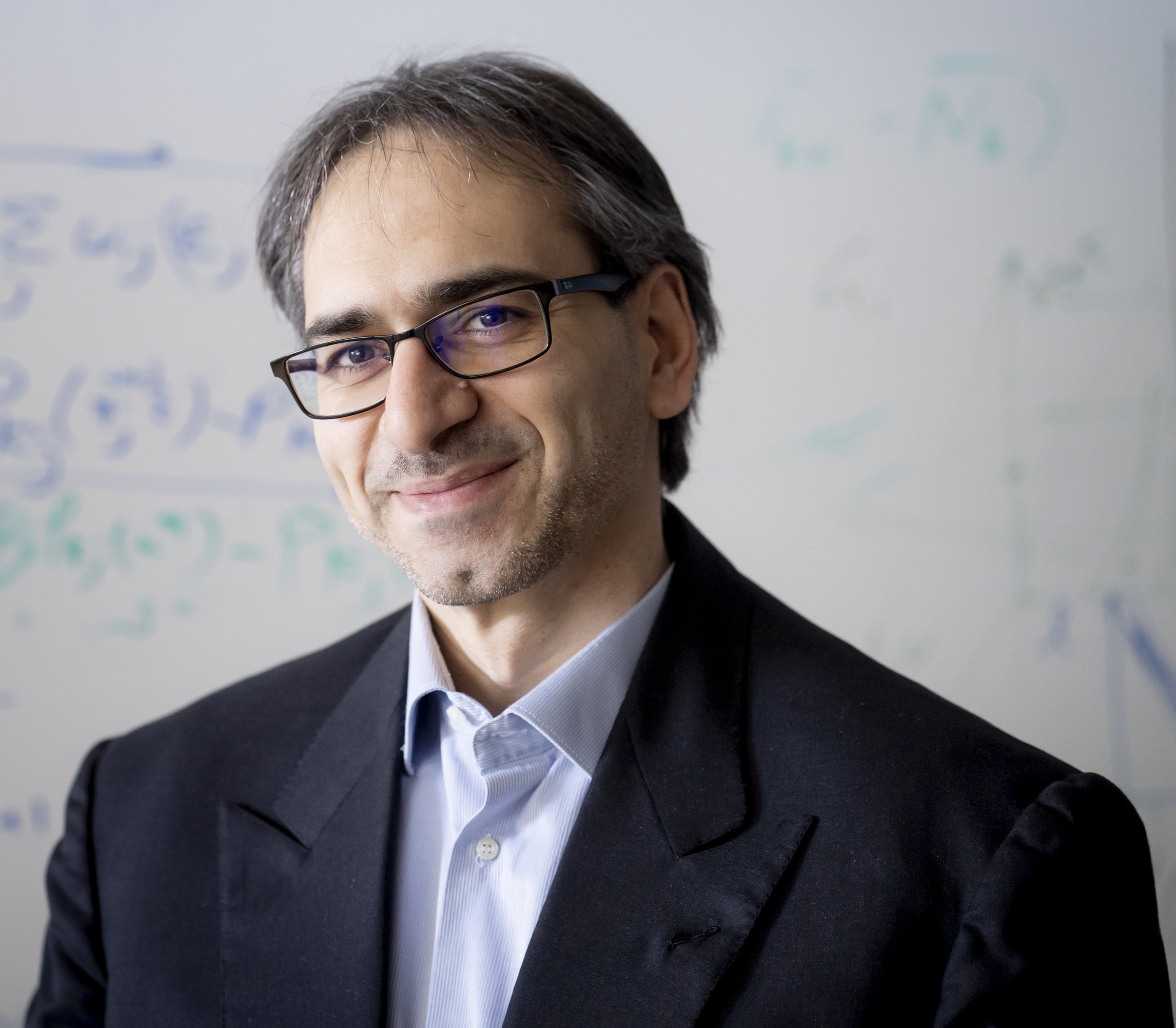}}]{Gesualdo Scutari} (Fellow, IEEE) received the Elec- trical Engineering and Ph.D. degrees (both with Hons.) from the University of Rome ``La Sapienza'' Rome, Italy, in 2001 and 2005, respectively. He is a Professor with the School of Industrial Engineering, Purdue University, West Lafayette, IN, USA. His research interests include continuous and distributed optimization, equilibrium programming, and their applications to signal processing and machine learning. He is a Senior Area Editor of the IEEE TRANSACTIONS ON SIGNAL PROCESSING and an Associate Editor of SIAM Journal on Optimization, Among others, he was the recipient of the 2013 NSF CAREER Award, the 2015 IEEE Signal Processing Society Young Author Best Paper Award, and the 2020 IEEE Signal Processing Society Best Paper Award.\end{IEEEbiography}

\begin{IEEEbiography}[{\includegraphics[width=1in,keepaspectratio]{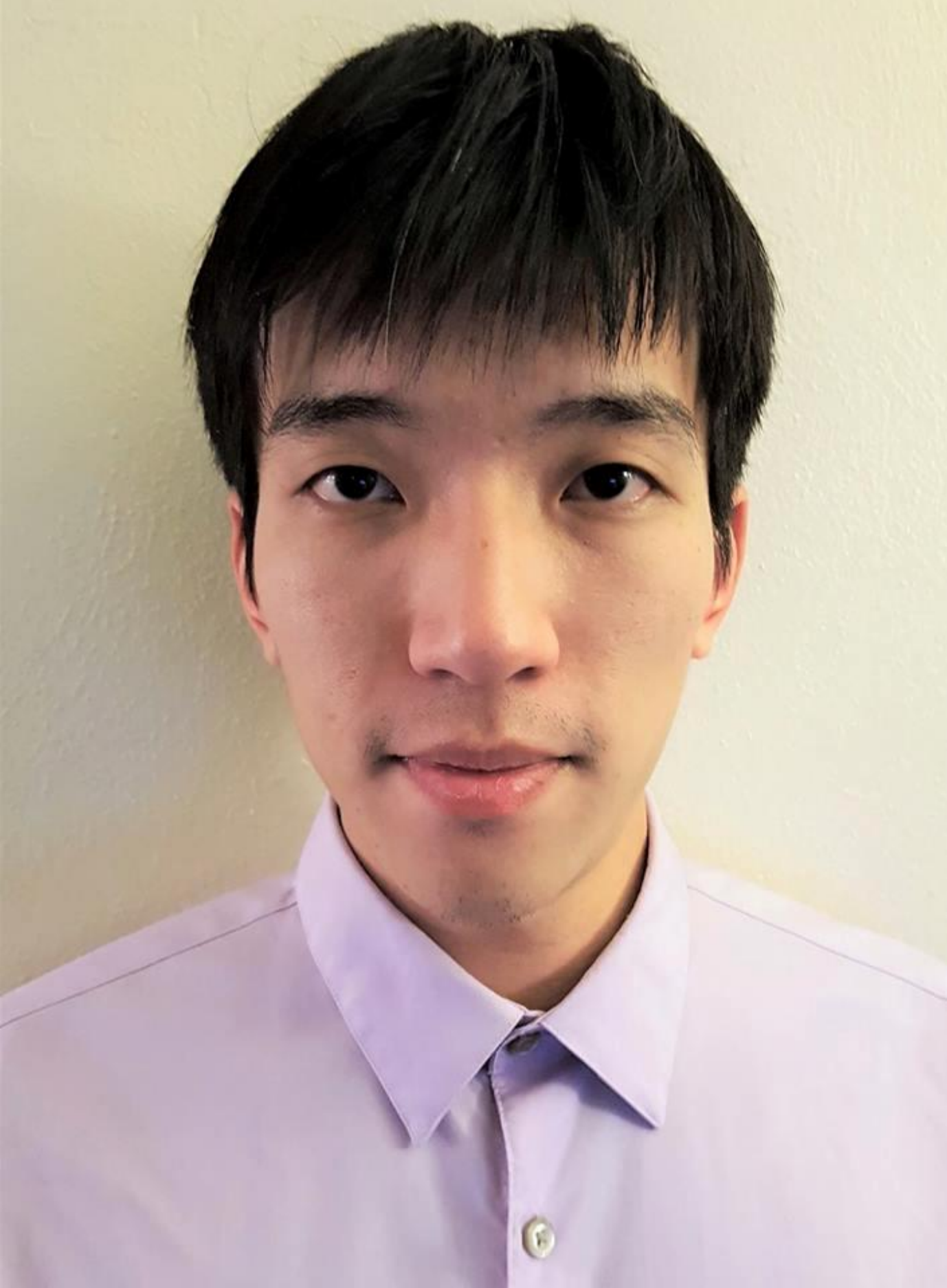}}]{Chang-Shen Lee}  Chang-Shen Lee received the B.Sc. degree in electrical engineering and computer science and the M.Sc. degree in communications engineering from National Chiao Tung University, Hsinchu, Taiwan, in 2012 and 2014, respectively. He received the Ph.D. degree in electrical and computer engineering from Purdue University, West Lafayette, IN, USA, in 2021. He is currently a senior software engineer at Bloomberg, New York, USA.\end{IEEEbiography}

  \end{document}